\DeclareMathAlphabet{\mathbbold}{U}{bbold}{m}{n}
\theoremstyle{plain}
        \newtheorem{theorem}{Theorem}[section]
        \newtheorem*{theorem*}{Theorem}
        \newtheorem*{conj*}{Conjecture}
        \newtheorem{lemma}[theorem]{Lemma}
        \newtheorem{prop}[theorem]{Proposition}
        \newtheorem{cor}[theorem]{Corollary}
        \newtheorem{thmx}{Theorem}
\theoremstyle{definition}
        \newtheorem{definition}[theorem]{Definition}
        \newtheorem{rem}[theorem]{Remark}
         \newtheorem*{assumptions}{The Assumptions}
\theoremstyle{remark}
        \newtheorem*{remark}{Remark}
\numberwithin{equation}{section}
\numberwithin{theorem}{section}
\numberwithin{table}{section}
\numberwithin{figure}{section}
\providecommand{\defn}[1]{\emph{#1}}
\renewcommand{\leq}{\leqslant}
\renewcommand{\geq}{\geqslant}
\newcommand{\diam}  {\operatorname{diam}}
\newcommand{\id} {\operatorname{id}}
\newcommand{\card} {\operatorname{card}}
\newcommand{\R}{\mathbb{R}}
\newcommand{\C}{\mathbb{C}}
\newcommand{\N}{\mathbb{N}}      
\newcommand{\Z}{\mathbb{Z}}      
\providecommand{\abs}[1]{\lvert#1\rvert}
\providecommand{\Absbig}[1]{\bigl\lvert#1\bigr\rvert}
\renewcommand{\:}{\colon}
\newcommand{\post}{\operatorname{post}}
\renewcommand{\=}{\coloneqq}
\newcommand{\cA}{\mathcal{A}}
\newcommand{\cB}{\mathcal{B}}
\newcommand{\cI}{\mathcal{I}}
\newcommand{\cJ}{\mathcal{J}}
\renewcommand{\cL}{\mathcal{L}}
\newcommand{\cM}{\mathcal{M}}
\newcommand{\cP}{\mathcal{P}}
\newcommand{\cS}{\mathcal{S}}
\newcommand{\cU}{\mathcal{U}}
\newcommand{\cV}{\mathcal{V}}
\newcommand{\cW}{\mathcal{W}}
\newcommand{\cY}{\mathcal{Y}}
\newcommand{\fC}{\mathfrak{C}}
\newcommand{\fF}{\mathfrak{F}}
\newcommand{\fG}{\mathfrak{G}}
\newcommand{\fX}{\mathfrak{X}}
\newcommand{\hU}{\widehat{U}}
\newcommand{\tU}{\widetilde{U}}
\newcommand{\tx}{\widetilde{x}}
\newcommand{\ty}{\widetilde{y}}
\newcommand{\tgamma}{\widetilde{\gamma}}
\newcommand{\Bouquet}{\mathcal{B}}
\newcommand{\bouquet}{\operatorname{B}}
\newcommand{\closure}[1]{\overline{#1}}
\newcommand{\diff}{{\,\mathrm{d}}}
\newcommand{\Round}{\operatorname{Round}}
\renewcommand{\top}{\operatorname{top}}
\theoremstyle{definition}
    \newtheorem*{addassumptions}{The Additional Assumptions}
\begin{document}
%begin document

%begin frontpage
\title[Weak expansion properties]{Weak expansion properties and a large deviation principle for coarse expanding conformal systems}
\author{Zhiqiang~Li \and Hanyun~Zheng}
%\thanks{}

\address{Zhiqiang~Li, School of Mathematical Sciences \& Beijing International Center for Mathematical Research, Peking University, Beijing 100871, China.}
\email{zli@math.pku.edu.cn}

\address{Hanyun~Zheng, School of Mathematical Sciences, Peking University, Beijing 100871, China.}
\email{1900013001@pku.edu.cn}

\subjclass[2020]{Primary: 37F10; Secondary: 37D35, 37F20, 37F15, 37D25, 37B99, 57M12.}

\keywords{Coarse expanding conformal systems, asymptotically $h$-expansive, equilibrium states, large deviation principles, equidistribution.}

\begin{abstract}
	Coarse expanding conformal systems were introduced by P.~Ha\"issinsky and K.~M.~Pilgrim to study the essential dynamical properties of certain rational maps on the Riemann sphere in complex dynamics from the point of view of Sullivan's dictionary.
	
    In this paper, we prove that for a metric coarse expanding conformal system $f\:(\fX_1,X)\rightarrow (\fX_0,X)$ with repellor $X$, the map $f|_X\:X\rightarrow X$ is asymptotically $h$-expansive. Moreover, we show that $f|_X$ is not $h$-expansive if there exists at least one branch point in the repellor. We also prove that $f|_X$ is forward expansive when there is no branch point in the repellor. 
    	
    Our study of expansion properties does not rely on symbolic codings or Markov partitions, and exploits directly the geometric data. 
    
    As a consequence of asymptotic $h$-expansiveness, for $f|_X$ and each real-valued continuous potential on $X$, there exists at least one equilibrium state. For such maps, if some additional assumptions are satisfied, we can furthermore establish a level-$2$ large deviation principle for iterated preimages, followed by an equidistribution result.
\end{abstract}

\maketitle

\tableofcontents
%end frontpage

%begin intro
\section{Introduction}\label{s:Introduction}
\subsection{Background and motivation}
Expansiveness is a well-known condition in the study of dynamical systems. In the context of continuous maps on compact metric spaces, there are two weaker notions of expansion, namely \defn{$h$-expansiveness} and \defn{asymptotic $h$-expansiveness}, introduced by R.~Bowen and M.~Misiurewicz, respectively. Both notions play important roles in the study of dynamical systems (see for example, \cite{DN05,DM09,Bur11}). Moreover, any smooth map on a compact Riemannian manifold is asymptotically $h$-expansive \cite{Buz97}.

In complex dynamics, the first-named author studied $h$-expansiveness and asymptotic $h$-expansiveness for expanding Thurston maps \cite{Li15}. In this paper, we consider a more general context in complex dynamics. The dynamical systems that we study in this paper are called \emph{coarse expanding conformal (CXC) systems}, introduced by P.~Ha\"issinsky and K.~M.~Pilgrim \cite{HP09}. CXC systems are a class of topological systems defined in an axiomatic way, generalizing a wide range of maps, including some non-uniformly expanding rational maps on the Riemann sphere $\widehat\C$ and expanding Thurston maps without periodic critical points studied by M.~Bonk and D.~Meyer \cite{BM10,BM17} dating back to W.~P.~Thurston's topological characterization of postcritially finite rational maps. Other examples of CXC systems include
\begin{enumerate}
	\smallskip
	\item[\textnormal{(1)}] hyperbolic, subhyperbolic, and semihyperbolic rational maps on the Riemann sphere $\widehat\C$, acting on their Julia sets (equipped with spherical metric);
	\smallskip
	\item[\textnormal{(2)}] Latt\`es type uniformly quasiregular maps on Riemannian manifolds;
	\smallskip
	\item[\textnormal{(3)}] smooth expanding partial self-covering maps of Riemannian manifolds. More precisely, let $\fX_0$ be a connected complete Riemannian manifold and $\fX_1\subseteq\fX_0$ an open submanifold with finitely many connected components which is compactly contained in $\fX_0$, then an expanding $C^1$ covering map $f\:\fX_0\rightarrow\fX_1$ is CXC.
\end{enumerate}
We refer the reader to \cite[Sections~2.3, 4.2, and 4.4]{HP09}, and  \cite{CJY94,CT11,HP10,JZ09} for more information regarding examples listed above.

The study of CXC systems is motivated by complex dynamics and is devoted to revealing the analogy between geometric group theory and complex dynamics. Vast literature concerning analogies between Kleinian groups and rational maps is devoted to Sullivan's dictionary, and an enlargement of the dictionary is suggested in the study of CXC systems (see \cite{HP09}).

In this paper we establish the asymptotic $h$-expansiveness of each metric CXC system $f\:(\fX_1,X)\rightarrow(\fX_0,X)$ with repellor $X$, more precisely, of the map $f|_X$. We also study expansiveness and $h$-expansiveness of metric CXC systems. The motivation to investigate asymptotic $h$-expansiveness comes from the theory of thermodynamic formalism. Investigation on concepts such as topological entropy, measure-theoretic entropy, topological pressure, and measure-theoretic pressure is a main theme in thermodynamic formalism. The maximizing measures of measure-theoretic entropy and measure-theoretic pressure are known as measures of maximal entropy and equilibrium states, respectively. The existence, uniqueness, and various other properties of equilibrium states have been studied in many contexts (see for example, \cite{Bo75, Ru89, Pr90, KH95, MauU03, BS03, Ol03, Yu03, PU10, MayU10}).

M.~Misiurewicz showed that asymptotic $h$-expansiveness of a map $f$ guarantees that the measure-theoretic entropy $\mu\mapsto h_\mu(f)$ is upper semi-continuous \cite{Mi76} with respect to the weak$^*$ topology. It implies that for each real-valued continuous function $\phi$, the measure-theoretic pressure $\mu\mapsto P_\mu(f,\phi)$ is upper semi-continuous, which guarantees the existence of equilibrium states.

Once we obtain the existence of equilibrium states, it is natural to ask questions regarding uniqueness and other properties. In the context of CXC systems, the uniqueness problem remains unsettled for general continuous potentials $\phi$. However, thermodynamic formalism is established in related systems for potentials under some mild regularity assumptions by Das et al.~\cite{DPTUZ21} building upon prior works of P.~Ha\"issinsky and K.~M.~Pilgrim \cite{HP09}. In such a context, they established the uniqueness of equilibrium states for H\"older continuous potentials and CXC systems under additional assumptions. The first-named author has been informed that similar results on equilibrium states have also been obtained independently by P.~Ha\"issinsky.

Based on the existence and uniqueness of equilibrium states, it is natural to investigate how iterated preimages are distributed with respect to such measures. We establish a \defn{level-$2$ large deviation principle}, and consequently, the equidistribution of iterated preimages with respect to the unique equilibrium state. We use a variant of Y.~Kifer's result \cite{Ki90} formulated by H.~Comman and J.~Rivera-Letelier \cite{CRL11}, recorded in Theorem~\ref{t:LD, Kifer and Juan}. We refer the reader to \cite[Section~2.5]{CRL11} and the references therein for a more systematic introduction to the theory of large deviation principles. The equidistribution result follows from the corresponding level-$2$ large deviation principle.

\subsection{Main results} In this paper, we study some weak expansion properties of metric CXC systems and apply them to prove some statistical properties. The main results are listed below.

First, we establish the asymptotic $h$-expansiveness for metric CXC systems (see Subsection~\ref{ss:metric CXC} for the definition).
\begin{thmx}[Asymptotic $h$-expansiveness]\label{t:Intro asym h-expans}
    Let $f\:(\fX_1,X)\rightarrow(\fX_0,X)$ be a metric CXC system with repellor $X$. Then $f|_X\:X\rightarrow X$ is asymptotically $h$-expansive. 
\end{thmx}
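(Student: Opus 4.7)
The plan is to verify Misiurewicz's characterization of asymptotic $h$-expansiveness: it suffices to show that the topological tail entropy
\[
h^*(f|_X) \= \lim_{\ve \to 0^+} \sup_{x \in X} h_\top\bigl(f|_X, \Phi_\ve(x)\bigr)
\]
vanishes, where $\Phi_\ve(x) \= \bigcap_{n \ge 0} (f|_X)^{-n}\bigl(\overline{B(f^n(x),\ve)}\bigr)$ is the infinite $\ve$-dynamical fiber over $x$. The strategy is geometric and mirrors the tile-counting approach used in \cite{Li15} for expanding Thurston maps, but with Markov tiles replaced by the intrinsic CXC open covers.

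First, I would extract from the CXC data a sequence of finite open covers $\{\mathcal{U}_n\}_{n \ge 0}$ of a neighborhood of $X$, where $\mathcal{U}_n$ consists of the connected components of $f^{-n}(U)$ for $U \in \mathcal{U}_0$. The relevant CXC axioms give: (a) $\mesh(\mathcal{U}_n) \to 0$ as $n \to \infty$; (b) the branched covers $f^n \: V \to U$ have a uniformly bounded degree $p$; and (c) tiles have uniformly controlled roundness, which yields a uniform multiplicity bound $M$ for each $\mathcal{U}_n$. Given $\ve > 0$, choose $N = N(\ve)$ with $\mesh(\mathcal{U}_N) < \ve/2$; then $\Phi_\ve(x)$ is captured by those $\mathcal{U}_N$-tiles meeting the orbit of $x$.

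Second, I would bound $h_\top(f|_X, \Phi_\ve(x))$ from above via the iterated refinements $\mathcal{V}_k \= \bigvee_{i=0}^{k-1} (f|_X)^{-i}\mathcal{U}_N$. Each element of $\mathcal{V}_k$ meeting $\Phi_\ve(x)$ corresponds to an itinerary $(V_0, \dots, V_{k-1}) \in \mathcal{U}_N^{\,k}$ compatible with the pullback dynamics and satisfying $V_i \cap B(f^i(x),\ve) \ne \emptyset$. The step-$i$ count is controlled by: (i) the number of $\mathcal{U}_N$-tiles intersecting $B(f^i(x),\ve)$, uniformly bounded by $M$; and (ii) the number of admissible pullback branches, which by the CXC degree axiom is $1$ whenever $f^{i-1}(x)$ lies away from the branch set at scale $\ve$, and at most $p$ when it does not. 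Altogether the number of $\mathcal{V}_k$-elements meeting $\Phi_\ve(x)$ is bounded by a product of local-degree factors along the orbit of $x$.

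The main obstacle lies in the final step: converting this into a uniform-in-$x$ bound that decays as $\ve \to 0$. Critical orbits in CXC systems can be infinite and may accumulate on the repellor, so for a fixed $\ve$ the density of near-branch visits along an arbitrary orbit need not be small; this is precisely why $f|_X$ fails to be $h$-expansive when a branch point lies in $X$. The key idea for asymptotic decay is that, as $\ve \to 0$, one is free to increase $N$ correspondingly, and the branches meeting at a given critical point become separated in the finer cover $\mathcal{U}_{N+1}$; hence the apparent multiplicity of pullbacks collapses once one refines past the relevant scale. Making this collapse uniform in $x \in X$ is where the quasi-symmetric/roundness features of the metric CXC structure must be fully exploited. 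Combining this uniform collapse with the counting estimate of the previous step yields $\sup_x h_\top(f|_X, \Phi_\ve(x)) \to 0$ as $\ve \to 0^+$, which is the desired asymptotic $h$-expansiveness.
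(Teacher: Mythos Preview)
Your overall framework is reasonable---working with the good open covers $\cU_N$, counting elements of $\bigvee_{i=0}^{k-1} f^{-i}(\cU_N)$ that meet a dynamical fiber, and invoking the degree bound from [\textbf{Deg}]---but the quantitative mechanism you sketch in the second and third paragraphs does not close. Your step-by-step count produces a bound of the form $(\text{const})^k$: at each time $i$ you pick up a multiplicity factor (at best bounded by some $M$ or $p$), and the product over $i=0,\dots,k-1$ is exponential in $k$. That yields $h_\top(f|_X,\Phi_\ve(x)) \le \log(\text{const})$, which is positive and does not decay as $\ve \to 0$. Your proposed ``collapse'' mechanism---that branches separate in $\cU_{N+1}$---addresses only a single critical point at a single time, not the cumulative growth over the orbit; without postcritical finiteness (which CXC systems need not have) there is no reason the density of near-branch visits along an orbit should be small, as you yourself note.

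The paper's argument avoids this by controlling the count in \emph{blocks} rather than step-by-step. Fixing a level $m$ and writing $A = \bigcap_{i=0}^{n} f_X^{-i}(U_i^m \cap X)$, one covers $A$ by a set $E_m \subseteq \cU_{m+n}$ of pullback components. The key geometric observation is that for each $k$, the collection $\cL_k \= \{f^k(U):U\in E_m\}\subseteq \cU_{m+n-k}$ is contained in the bouquet around $U_k^m$, whose diameter is $O(\theta^m)$; this is small enough to sit inside a \emph{single} element $\hat U_k$ of $\cU_{M_m}$, where $M_m \asymp m\cdot(\log\theta/\log\lambda) \to \infty$. Applying [\textbf{Deg}] to $f^{M_m}|_{\hat U_k}$ gives $\card(\cL_k)\le p\cdot\card(\cL_{k+M_m})$, so iterating in blocks of length $M_m$ yields $\card(E_m)\le p^{n/M_m+1}$. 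This gives conditional entropy $\le (\log p)/M_m \to 0$ as $m\to\infty$, which is exactly asymptotic $h$-expansiveness. The missing idea in your plan is precisely this: the bouquet/roundness estimates let you apply the degree bound once per block of $M_m$ steps rather than once per step, and it is the divergence of $M_m$ that forces the tail entropy to zero.
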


Then, we study forward expansiveness and $h$-expansiveness of metric CXC systems.
\begin{thmx}\label{t:Intro_expansive and h-expansive}
	Let $f\:(\fX_1,X)\rightarrow(\fX_0,X)$ be a metric CXC system. Then, the following statements are equivalent:
	\begin{itemize}
		\smallskip
		\item [\textnormal{(i)}] There is no branch point in $X$.
		\smallskip
		\item [\textnormal{(ii)}] $f|_X$ is forward expansive.
		\smallskip
		\item [\textnormal{(iii)}] $f|_X$ is $h$-expansive.
	\end{itemize}
\end{thmx}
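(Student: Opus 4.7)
The plan is to establish the cycle of implications (i)~$\Rightarrow$~(ii)~$\Rightarrow$~(iii)~$\Rightarrow$~(i). The direction (ii)~$\Rightarrow$~(iii) is essentially immediate: a forward expansive constant $\delta$ forces $\Phi_\varepsilon(x) = \{x\}$ for every $x \in X$ and every $0 < \varepsilon \le \delta$, so the tail entropy $h^{\ast}(\varepsilon)$ vanishes. The substantive work therefore lies in (i)~$\Rightarrow$~(ii) and, chiefly, in (iii)~$\Rightarrow$~(i).

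For (i)~$\Rightarrow$~(ii), I would work directly with the CXC cover hierarchy $\{\cU_n\}_{n \ge 0}$. The absence of branch points in $X$ ensures that for every $U \in \cU_n$ the map $f^n|_U$ is a homeomorphism onto $f^n(U) \in \cU_0$. Let $\delta > 0$ be a Lebesgue number for the restriction of $\cU_0$ to $X$. If $x, y \in X$ satisfy $d(f^n(x), f^n(y)) \le \delta$ for all $n \ge 0$, then at each level $n$ some $V_n \in \cU_0$ contains both $f^n(x)$ and $f^n(y)$; letting $A_n \in \cU_n$ be the component of $f^{-n}(V_n)$ containing $x$, an induction on $n$ (using the uniform contraction of the inverse branch of $f$, itself a quantitative consequence of the expansion axiom plus the absence of branching) places $y \in A_n$ for every $n$. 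Since $\diam A_n \to 0$, this forces $y = x$, so $f|_X$ is forward expansive with constant $\delta$.

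The main obstacle is (iii)~$\Rightarrow$~(i), for which I argue the contrapositive: given a branch point $p \in X$ of $f$, I exhibit for every sufficiently small $\varepsilon > 0$ a dynamical set of positive topological entropy. The strategy is to iterate the branching at $p$ through the backward orbit tree of $f$. The CXC degree axiom yields, at each scale $n$, a nested neighborhood $U_n \in \cU_n$ of $p$ on which $f^n$ is branched over a fixed $V_0 \in \cU_0$ with total degree $\ge 2$; a generic value $q \in V_0$ near $f^n(p)$ then admits at least two preimages in $U_n$, and the roundness and diameter-distortion estimates built into the metric CXC structure ensure all these preimages lie in the Bowen ball $B_n(p, \varepsilon)$ for $n$ large. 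Chaining these liftings combinatorially along a carefully chosen backward orbit tree rooted near $p$, I aim to produce exponentially many preimages of a fixed test point inside a single dynamical set $\Phi_\varepsilon(p)$, pairwise $(n, \delta')$-separated for some $\delta' > 0$ depending only on the CXC data; this yields $h_{\top}(f|_{\Phi_\varepsilon(p)}) \ge \log 2 > 0$ and hence $h^{\ast}(\varepsilon) > 0$, contradicting $h$-expansiveness.

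The technical heart of this last step is two-fold: first, constructing the backward tree so that its width grows exponentially in $n$, which requires selecting branches that repeatedly pass through the branch locus, using both the branching at $p$ and at its iterated preimages; second, bounding the mutual $(n, \delta')$-distances between leaves of the tree from below by a constant independent of $n$. Both rely crucially on the uniform metric control provided by the roundness and bounded-degree axioms of metric CXC systems, which prevent inverse branches from collapsing distinct preimages sub-exponentially as $n \to \infty$.
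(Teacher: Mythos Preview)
Your cycle (ii)~$\Rightarrow$~(iii) is fine and matches the paper. The other two implications, however, each contain a genuine gap.

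For (i)~$\Rightarrow$~(ii): the claim that $f^n|_U$ is a homeomorphism for $U\in\cU_n$ is not justified. The hypothesis $B_f\cap X=\emptyset$ says nothing about branch points in $U\setminus X$, and even if $f$ is injective on every $U\in\cU_n$ for $n$ large, the composite $f^n|_U$ passes through $f^k(U)\in\cU_{n-k}$ for small $n-k$, where injectivity may fail. More seriously, your induction ``$y\in A_n$ for all $n$'' does not go through: $A_n$ is a component of $f^{-n}(V_n)$ and $A_{n-1}$ is a component of $f^{-(n-1)}(V_{n-1})$, with no nesting relation between $V_n$ and $V_{n-1}$; there is no mechanism forcing $y$ into the same component as $x$ at the next level. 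The paper proceeds differently: it fixes $n_0$ so that $f$ is injective on every bouquet of level $\geq n_0$, takes the \emph{maximal} level $n$ at which $x,y$ share an element $U\in\cU_n$, picks $U'\in\cU_{n+1}$ containing $x$ with controlled roundness, and then applies the injective map $f^{n-n_0}$ to push $U'$ up to level $n_0+1$ where its diameter is bounded below; the roundness distortion axiom [\textbf{Round}] then gives a uniform lower bound on $d(f^{n-n_0}(x),f^{n-n_0}(y))$.

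For (iii)~$\Rightarrow$~(i): your plan to build an exponentially branching backward tree inside $\Phi_\varepsilon(p)$ is the right idea, but you are missing the key ingredient that makes it work for \emph{arbitrarily small} $\varepsilon$. If you simply pull back a generic $q$ through $U_n\ni p$, the images $f^k(U_n)\in\cU_{n-k}$ have diameters up to $d_0$, so the preimages only lie in $B_n(p,\varepsilon)$ when $\varepsilon\gtrsim d_0$; this does not rule out $h$-expansiveness. The paper resolves this by invoking the self-similarity of the repellor (Proposition~\ref{p:repellors are fractal}): for any small neighborhood $W$ of a point near $p$ and any $U_0^m\in\cU_m$, there exist $t$ and $V^{m+t}\in\cU_{m+t}$ with $V^{m+t}\subseteq W$ and $f^t(V^{m+t})=U_0^m$. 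This ``re-injection'' allows one to pull back, return to the branching neighborhood of $p$, branch again, and repeat---producing $k^{\lfloor n/(t+1)\rfloor}$ points that all lie in a single element of $\bigvee_{i=0}^n f_X^{-i}(\cW_m)$ for \emph{any} prescribed $m$, yet are pairwise separated by $\bigvee_{j=0}^n f_X^{-j}(\cW_l)$ for $l$ large. Without this recurrence step your tree cannot stay inside an arbitrarily fine dynamical ball.
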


%Then, we show that a metric CXC system is forward expansive if there is no branch point in its repellor.
%\begin{thmx}\label{t:Intro_expansive}
%	Let $f\:(\fX_1,X)\rightarrow(\fX_0,X)$ be a metric CXC system.  Then, $f|_X$ is expansive if and only if there is no branch point in $X$.
%\end{thmx}

%We also prove that if a metric CXC system is $h$-expansive if and only if there is no branch point in its repellor.
%\begin{thmx}\label{t:Intro_not h-expansive}
 %   Let $f\:(\fX_1,X)\rightarrow(\fX_0,X)$ be a metric CXC system. Then, $f|_X\:X\to X$ is $h$-expansive if and only if there is no branch point in $X$.
%\end{thmx}

As a consequence of Theorem~\ref{t:Intro asym h-expans}, we establish the existence of equilibrium states for the map $f|_X$ and each continuous potential $\phi$.
\begin{thmx}[Existence of equilibrium states]\label{t:Intro existence equilibrium}
    Let $f\:(\fX_1,X)\rightarrow(\fX_0,X)$ be a metric CXC system with repellor $X$. Then, for each real-valued continuous function $\phi\:X\rightarrow\R$, there exists at least one equilibrium state for the map $f|_X$ and potential $\phi$. In particular, there exists at least one measure of maximal entropy for $f|_X$.
\end{thmx}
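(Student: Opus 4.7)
The plan is to deduce Theorem~\ref{t:Intro existence equilibrium} from Theorem~\ref{t:Intro asym h-expans} by a standard upper semi-continuity argument for the entropy map.

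First, I would apply Theorem~\ref{t:Intro asym h-expans} to the metric CXC system $f\:(\fX_1,X)\rightarrow(\fX_0,X)$ to conclude that $f|_X\:X\rightarrow X$ is asymptotically $h$-expansive. Since $X$ is a compact metric space (this is part of the definition of a repellor in the CXC setting) and $f|_X$ is continuous, I can then invoke M.~Misiurewicz's theorem \cite{Mi76}, which asserts that for a continuous map on a compact metric space, asymptotic $h$-expansiveness implies that the measure-theoretic entropy map $\mu\mapsto h_\mu(f|_X)$ is upper semi-continuous with respect to the weak$^\ast$ topology on the space $\MMM(X,f|_X)$ of $f|_X$-invariant Borel probability measures.

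Next, since $\phi\:X\rightarrow\R$ is continuous on the compact set $X$, the assignment $\mu\mapsto\int\phi\diff\mu$ is (weak$^\ast$) continuous on $\MMM(X,f|_X)$. Consequently the functional
\[
\mu\mapsto h_\mu(f|_X)+\int\phi\diff\mu
\]
is upper semi-continuous on $\MMM(X,f|_X)$. The set $\MMM(X,f|_X)$ is non-empty (by the Krylov--Bogolyubov theorem applied to the continuous self-map $f|_X$ on compact $X$) and weak$^\ast$ compact. Any upper semi-continuous real-valued functional on a non-empty compact space attains its supremum, so there exists $\mu_0\in\MMM(X,f|_X)$ with
\[
h_{\mu_0}(f|_X)+\int\phi\diff\mu_0=\sup\biggl\{h_\mu(f|_X)+\int\phi\diff\mu:\mu\in\MMM(X,f|_X)\biggr\}.
\]
By the variational principle for topological pressure, the right-hand side equals $P(f|_X,\phi)$; thus $\mu_0$ is an equilibrium state for $f|_X$ and $\phi$. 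Specializing to $\phi\equiv 0$ produces a measure of maximal entropy.

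There is no real obstacle here; the proof is essentially a one-line reduction to Theorem~\ref{t:Intro asym h-expans} combined with classical facts (Misiurewicz's upper semi-continuity theorem, the Krylov--Bogolyubov theorem, and the variational principle). The only point requiring care is to verify that the setting of CXC systems fits the hypotheses of these classical results, i.e., that $X$ is compact and metrizable and $f|_X$ is continuous; both are immediate from the definition of a metric CXC system and its repellor recalled in Subsection~\ref{ss:metric CXC}.
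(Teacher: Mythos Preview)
Your proposal is correct and follows essentially the same approach as the paper: apply Theorem~\ref{t:Intro asym h-expans}, invoke Misiurewicz's theorem (recorded in the paper as Proposition~\ref{p:upper semi continuous}) to obtain upper semi-continuity of the entropy map, add the continuous linear functional $\mu\mapsto\int\phi\diff\mu$, and use weak$^*$ compactness of $\cM(X,f_X)$ together with the Variational Principle to conclude. The paper's proof is nearly identical, differing only in presentation.
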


Applying some additional assumptions (see Subsection~\ref{ss:LD addassumptions}), we furthermore establish a level-$2$ large deviation principle (see Subsection~\ref{ss:level-2 LD} for the definition) for metric CXC systems. We refer the reader to Section~\ref{s:Notation} for the notations in the statement of the following theorem.
\begin{thmx}[Level-$2$ large deviation principle]\label{t:Intro_LD}
    Let $f\:(\fX_1,X)\rightarrow(\fX_0,X)$ be a metric CXC system and denote $f_X\=f|_X$. Assume that the following conditions are satisfied:
\begin{enumerate}
	\smallskip
      \item[\textnormal{(1)}] $\fX_0$ and $\fX_1$ are strongly path-connected.
    \smallskip
      \item[\textnormal{(2)}] The branch set $B_f$ is finite. 
\end{enumerate}
    For each $n\in\N$, let $W_n\:X\rightarrow\cP(X)$ be the continuous function defined by
    \begin{equation*}
        W_n(x)\=\frac{1}{n}\sum_{i=0}^{n-1}\delta_{f^i(x)},
    \end{equation*}
    and denote $S_n\psi(x) \=\sum_{i=0}^{n-1}\psi(f^i(x))$ for $\psi\in C(X)$ and $x\in X$. 
    Let $\phi$ be a real-valued H\"older continuous function on $X$, and define
    \begin{equation}   \label{e:I^phi}
    I^\phi(\mu) \=\begin{cases}
                P(f_X,\phi)-\int\!\phi \diff\mu-h_\mu(f_X) &\mbox{ if }\mu\in\cM(X,f_X);\\
        +\infty &\mbox{ if } \mu\in\cP(X)\setminus \cM(X,f_X).
    \end{cases}
    \end{equation}
    Then for each sequence $\{x_n\}_{n\in\N}$ of points in $X$, the sequence $\{\Omega_n\}_{n\in\N}$ of Borel probability measures on $\cP(X)$ given by
\begin{equation*}
    \Omega_n\=
    \sum\limits_{y\in f^{-n}(x_n)}
    \frac{\deg(f^n;y)\cdot\exp{(S_n\phi(y))}}
    {\sum_{z\in f^{-n}(x_n)}\deg(f^n;z)\cdot\exp{(S_n\phi(z))}}\delta_{W_n(y)}
\end{equation*}
    satisfies a level-$2$ large deviation principle with rate function $I^\phi$, and it converges in the weak$^*$ topology to the Dirac measure supported on the unique equilibrium state for $f$ and potential $\phi$. Furthermore, for each convex open subset $\fG$ of $\cP(X)$ containing some invariant measure, we have
    \begin{equation}\label{e:LD}
        \lim_{n\to+\infty}\frac{1}{n}\log(\Omega_n(\fG))
        =\lim_{n\to+\infty}\frac{1}{n}\log(\Omega_n(\closure\fG))
        =-\inf_\fG I^\phi=-\inf_{\closure\fG} I^\phi.
    \end{equation}
\end{thmx}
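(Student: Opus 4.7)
The plan is to apply the Comman--Rivera-Letelier variant of Kifer's large deviation theorem, recorded as Theorem~\ref{t:LD, Kifer and Juan}. In the setting here, that framework reduces the level-$2$ LDP for $\{\Omega_n\}$ to verifying two hypotheses: (a)~upper semi-continuity of $\mu \mapsto h_\mu(f_X)$ on $\cP(X)$ in the weak$^*$ topology, and (b)~convergence of the normalized log-partition sums
\begin{equation*}
    \frac{1}{n} \log \int_{\cP(X)} \exp(n\langle \cdot,\, \psi\rangle) \diff\Omega_n
    = \frac{1}{n} \log \frac{Z_n(\phi+\psi;\, x_n)}{Z_n(\phi;\, x_n)}
    \longrightarrow P(f_X, \phi+\psi) - P(f_X, \phi)
\end{equation*}
for every $\psi \in C(X)$, where $Z_n(\eta;\, x) \= \sum_{y\in f^{-n}(x)} \deg(f^n;\, y)\exp(S_n \eta(y))$. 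Hypothesis (a) is immediate from Theorem~\ref{t:Intro asym h-expans} via Misiurewicz's theorem.

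The main obstacle is therefore hypothesis (b), the uniform asymptotic evaluation of $Z_n$. I would prove it first for H\"older $\psi$ and then extend by density. For H\"older $\psi$ the potential $\phi+\psi$ is itself H\"older, so the thermodynamic formalism for metric CXC systems under assumptions~(1)--(2)---developed by Ha\"issinsky--Pilgrim and Das et al.\ as cited in the introduction---provides a spectral gap for the Ruelle-type transfer operator $\cL_{\phi+\psi} g(x) \= \sum_{y\in f^{-1}(x)} \deg(f;\, y)\exp((\phi+\psi)(y)) g(y)$, and yields a uniform two-sided estimate $C^{-1} \exp(nP(f_X,\phi+\psi)) \le Z_n(\phi+\psi;\, x_n) \le C \exp(nP(f_X,\phi+\psi))$ for a constant $C=C(\phi+\psi)\ge 1$ independent of $x_n \in X$ (apply $\cL_{\phi+\psi}^n$ to the constant function $1$ and evaluate at $x_n$). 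Dividing gives the desired limit. For general continuous $\psi$ I would invoke the trivial estimate
\begin{equation*}
    \AbsBig{\tfrac{1}{n}\log Z_n(\phi+\psi;\, x_n) - \tfrac{1}{n}\log Z_n(\phi+\psi';\, x_n)} \le \norm{\psi-\psi'}_{C(X)}
\end{equation*}
valid for all $n$ and $x_n$, together with the analogous $1$-Lipschitz continuity of $\eta\mapsto P(f_X,\eta)$ coming from the variational principle. Density of H\"older functions in $C(X)$ then transfers the limit from H\"older $\psi$ to all $\psi \in C(X)$.

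With both hypotheses in force, Theorem~\ref{t:LD, Kifer and Juan} yields the LDP with rate function $\mu \mapsto \sup_{\psi \in C(X)}\{\int\psi\diff\mu - (P(f_X,\phi+\psi) - P(f_X,\phi))\}$. A standard Legendre-duality argument, using the variational principle together with upper semi-continuity of $\mu \mapsto h_\mu(f_X) + \int\phi\diff\mu$ on $\cM(X,f_X)$, identifies this supremum with $I^\phi$ from~\eqref{e:I^phi}. The weak$^*$ convergence $\Omega_n \to \delta_{\mu_\phi}$ then follows from the LDP upper bound applied to complements of neighborhoods of the unique equilibrium state $\mu_\phi$, which under assumptions~(1)--(2) is the unique zero of $I^\phi$. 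Finally, for the enhanced equality~\eqref{e:LD} on a convex open $\fG$ containing some invariant $\nu$, I exploit convexity and lower semi-continuity of $I^\phi$: since $I^\phi(\nu) < \infty$, for any $\mu \in \closure\fG$ the segment $(1-t)\mu + t\nu$ lies in $\fG$ for $t \in (0,1]$ and satisfies $I^\phi((1-t)\mu + t\nu) \le (1-t)I^\phi(\mu) + tI^\phi(\nu) \to I^\phi(\mu)$ as $t\to 0^+$, so $\inf_\fG I^\phi = \inf_{\closure\fG} I^\phi$, and the chain of standard LDP bounds closes to give~\eqref{e:LD}.
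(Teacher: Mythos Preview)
Your strategy---apply Theorem~\ref{t:LD, Kifer and Juan} after checking upper semi-continuity of entropy and the partition-sum asymptotics---is exactly the paper's. But you have misread the hypotheses and conclusions of Theorem~\ref{t:LD, Kifer and Juan} in a few places. First, that theorem only requires the limit for $\psi$ in a dense subspace $H\subseteq C(X)$; the paper takes $H=C^{0,\alpha}(X)$, so your density extension to all of $C(X)$ is unnecessary. Second, you omitted hypothesis~(ii): for each $\psi\in H$ the potential $\phi+\psi$ must admit a \emph{unique} equilibrium state. The paper verifies this via \cite[Theorem~1.1]{DPTUZ21} (recorded as Proposition~\ref{p:Uniqueness equilibrium state}). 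Third, the identification of the rate function with $I^\phi$, the weak$^*$ convergence to $\delta_{\mu_\phi}$, and the convexity statement~\eqref{e:LD} are all part of the \emph{conclusion} of Theorem~\ref{t:LD, Kifer and Juan} as stated; your Legendre-duality and segment arguments are correct but redundant here.

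For the one substantive computation---showing $\frac{1}{n}\log Z_n(\eta;x_n)\to P(f_X,\eta)$ uniformly in the choice of $x_n\in X$ for H\"older $\eta$---you invoke a transfer-operator spectral gap from \cite{DPTUZ21} as a black box. The paper instead builds this up explicitly through the symbolic semiconjugacy $\pi\colon\Sigma\to X$ (Proposition~\ref{p:semi conjugacy}), the pressure identity $P(f_X,\eta)=P(\sigma,\eta\circ\pi)$ (Proposition~\ref{p:top pressure}), bounded distortion along good open sets (Lemma~\ref{l:loc control of Sn}), and the classical preimage-sum estimates for the full shift, assembled into Lemmas~\ref{l:control sum deg exp S_n, uniformly} and~\ref{l:pressure_sequence in X}. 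Your shortcut is plausible, but you would need to check that \cite{DPTUZ21} actually furnishes the uniform-in-$x$ two-sided bound on $X$ itself (including at branch-value points); the paper's route is more self-contained on this point.
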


Under proper assumptions, there exists a unique equilibrium state for the map $f|_X$ and each H\"older continuous potential due to \cite[Theorem~1.1]{DPTUZ21}. A consequence of the level-$2$ large deviation principle is the equidistribution of iterated preimages with respect to the equilibrium state (see Corollary~\ref{c:Equidistribution}).

\begin{remark}
As shown in the recent work \cite{LS24} of the first-named author and Xianghui~Shi, for expanding Thurston maps, the absence of periodic critical points may not be essential for level-2 large deviation principles, even though the proof in the general case is much more involved relying on the construction of so-called \defn{subsystems}. Subsystems rely on Markov partitions for expanding Thurston maps. It is therefore also interesting to investigate level-2 large deviation principles as in Theorem~\ref{t:Intro_LD} without assuming the [{\bf Deg}] axiom on $f$ and without using Markov partitions.
\end{remark}

 Our results in this paper partially extend results in \cite{HP09, BM10, BM17, Li15, DPTUZ21}.
 
 Note that each expanding Thurston map $f \: S^2 \rightarrow S^2$ without periodic critical points is a metric CXC system (if we equip the sphere with a visual metric) with all branch points in $X$. So Theorems~\ref{t:Intro asym h-expans} and~\ref{t:Intro_expansive and h-expansive} strengthen Theorem~1.1 in \cite{Li15} in the case where $f$ has no periodic critical points. In contrast, if an expanding Thurston map has a periodic critical point, then it is not a metric CXC system (or even a topological CXC system, see Subsection~\ref{ss:CXC}), and is known to be not asymptotically $h$-expansive (see \cite[Theorem~1.1]{Li15}). In the special case where the potential vanishes everywhere, the existence and uniqueness of the measure of maximal entropy and the corresponding equidistribution results were obtained by M.~Bonk and D.~Meyer \cite{BM10, BM17} for expanding Thurston maps, and by P.~Ha\"issinsky and K.~M.~Pilgrim \cite{HP09} for CXC systems.

 One major difference between a Thurston map and a CXC system is that the former is defined on a topological $2$-sphere, while the latter considers more general topological spaces, and that the former requires the set of forward iterated images of critical points to be finite, while the latter does not require such a strong assumption. This so-called \emph{postcritical finiteness} assumption is crucial in the work of M.~Bonk and D.~Meyer \cite{BM10, BM17} for the construction of certain forward invariant Jordan curves that induce Markov partitions for (sufficiently high iterations of) an expanding Thurston map (see also the work of J.~W.~Cannon, W.~J. Floyd, and W.~R.~Parry \cite{CFP07}). The proof of \cite[Theorem~1.1]{Li15} depends heavily on the combinatorial structures induced by the forward invariant Jordan curves and the corresponding \defn{Markov partitions}, which are not available in the general setting of CXC systems. Our strategy in this paper builds upon the construction of \emph{bouquets} (see Definition~\ref{d:bouquet}) from the more flexible \emph{good open covers} of P.~Ha\"issinsky and K.~M.~Pilgrim \cite{HP09}.

 The main technical device $E_m(U_0,\,\dots,\, U_{n-1};U_n)$ (see (\ref{e:Em})) is now defined using good open sets $U_0,\,\dots,\,U_n\in\cU_m$, which differs from similar constructions in \cite{Li15} relying on Markov partitions. The lack of Markov partitions results in significant technical difficulties. The combinatorial data of expanding Thurston maps plays an important role in \cite{Li15}. In \cite{Li15}, the combinatorial structure of so-called flowers, the bi-Lipschitz property with respect to visual metrics of an expanding Thurston map, and the finiteness of the postcritical set, etc., are widely used. On the other hand, in the current paper, we exploit directly the geometric data of CXC systems, for example, the roundness distortion properties and local comparability of the good open sets as well as the so-called bouquets. 
 
 We believe that our geometric approach, which does not rely on symbolic codings or Markov partitions, can be applied in other situations and inspire further research.

 \subsection{Structure of the paper} We will now give a brief description of the structure of this paper. 
 
 In Section~\ref{s:Notation}, we fix some notations in this paper.
 
 In Section~\ref{s:Preliminaries}, we review the definitions of topological CXC systems and metric CXC systems, along with some basic properties recorded from \cite{HP09}. We direct the reader to \cite[Chapter~2]{HP09} for a more detailed study.

 In Section~\ref{s:assumption}, we state the assumptions under which we establish our results. We will repeatedly refer to the assumptions as \defn{the Assumptions in Section~\ref{s:assumption}}. Note that the Assumptions in Section~\ref{s:assumption} serve as a summary of the assumptions adopted in the definition of CXC systems. We summarize them for clarity and convenience.

 In Section~\ref{s:asymptotic h-expansiveness}, we investigate the weak expansion properties of metric CXC systems.
We first introduce some basic concepts in Subsection~\ref{ss:asymptotic h-expans_basic concept}. We review the notion of topological conditional entropy $h(g|\eta)$ of a continuous map $g\:Z\rightarrow Z$ given an open cover $\eta$ of $Z$, and the notion of topological tail entropy $h^*(g)$. We then define $h$-expansiveness and asymptotic $h$-expansiveness using these notions.
In Subsection~\ref{ss:asym h-expans_lemmas}, we prove several lemmas that will be used in the proof of the asymptotic $h$-expansiveness.
Subsection~\ref{ss:pf thm asym & not h-expans} consists of the proofs of Theorems~\ref{t:Intro asym h-expans}, Theorem~\ref{t:Intro_expansive and h-expansive}. 

In Section~\ref{s:equilibrium states}, we show the existence of equilibrium states for a metric CXC system and a continuous potential, which is a consequence of the asymptotic $h$-expansiveness. This section contains the proof of Theorem~\ref{t:Intro existence equilibrium}.

In Section~\ref{s:LD}, with some additional assumptions, we study a level-$2$ large deviation principle and an equidistribution result for iterated preimages. 
In Subsection~\ref{ss:level-2 LD}, we briefly review a level-$2$ large deviation principle in our context, and record in Theorem~\ref{t:LD, Kifer and Juan} a theorem of Y.~Kifer \cite{Ki90}, reformulated by H.~Comman and J.~Rivera-Letelier \cite{CRL11}, on level-$2$ large deviation principles. This result will be applied later to our context.
In Subsection~\ref{ss:LD addassumptions}, we state the additional assumptions, which will be repeatedly referred to as \defn{the Additional Assumptions in Subsection~\ref{ss:LD addassumptions}}. 
%Note that with the Additional Assumptions in Subsection~\ref{ss:LD addassumptions}, the existence and uniqueness of equilibrium state for a metric CXC system and a H\"older continuous potential can be established (see \cite{DPTUZ21}).
Subsection~\ref{ss:pf of LD} consists of the proof of Theorem~\ref{t:Intro_LD}.

\subsection*{Acknowledgments} 
The first-named author would like to thank Peter~Ha\"issinsky and Juan Rivera-Letelier for insightful discussions. The authors would like to thank the anonymous referees for their valuable suggestions. The authors were partially supported by NSFC Nos.~12471083, 12101017, 12090010, 12090015, BJNSF No.~1214021, and Peking University Funding No.~7101303303, .

%end intro

%begin notation
\section{Notation}\label{s:Notation}
In this section, we set up some notations for this paper. Let $\Z$ be the set of integers, $\N\=\{1,\,2,\,3,\,\dots\}$ be the set of positive integers, and $\N_0\=\{0\}\cup\N$. The symbols $\log$ and $\exp$ denote the logarithm and exponential function to the base $e$, respectively. The symbol $\card(A)$ denotes the cardinality of a set $A$. For $x\in\R$, define $\lceil x\rceil\=\min\{n\in\Z : n\geq x\}$ and $\lfloor x\rfloor\=\max\{n\in\Z : n\leq x\}$.

For a map $f\:X\rightarrow Y$, we denote the restriction of $f$ to a subset $Z$ by $f|_Z$.

For a metric space $(X,d)$, the distance between $a$ and $b$ is denoted by $\abs{a-b}$ when the metric is clear from the context. The diameter of a subset $A$ is denoted by $\diam A\=\sup\{d(a,b) : a,b\in A\}$. The closure of a subset $A$ is denoted by $\closure{A}$.

Let $C(X)$ be the space of continuous functions from $X$ to $\R$. For $\alpha\in(0,1]$, $C^{0,\alpha}(X)$ is the space of $\alpha$-H\"older continuous function from $(X,d)$ to $\R$. Let $\cP(X)$ be the set of Borel probability measures on $X$. For $g: X\rightarrow X$, let $\cM(X,g)$ be the set of $g$-invariant Borel probability measures on $X$. For a point $x\in X$, we denote the Dirac measure supported on $\{x\}$ by $\delta_x$.

Consider a map $f \: X \rightarrow X$ on a set $X$. 
The inverse map of $f$ is denoted by $f^{-1}$. We write $f^n$ for the $n$-th iterate of $f$, and $f^{-n}\= (f^n)^{-1}$, for $n\in \N$. 
We set $f^0 \define \id{X}$, the identity map on $X$. For a real-valued function $\varphi \: X \rightarrow \R$, we write
\begin{equation}    \label{eq:def:Birkhoff average}
	S_n \varphi(x) \= \sum_{j=0}^{n-1} \varphi \bigl( f^j(x) \bigr)
\end{equation}
for $x\in X$ and $n\in \N_0$.
%end notation

%begin pre
\section{Preliminaries} \label{s:Preliminaries}
We review the definitions of finite branched covering (abbr: FBC) maps and metric coarse expanding conformal (abbr: metric CXC) systems, and we give some basic properties of metric CXC systems.

%begin FBC
\subsection{Finite branched covering maps} \label{ss:Finitebranchedcovering}

We define here the notion of finite branched covering maps. FBC maps generalize some rational maps on the Riemann sphere, capturing their essential topological properties. Here we refer to \cite[Section~2.1]{HP09}, where the reader can find a more detailed introduction.

Suppose that $X$ and $Y$ are locally compact Hausdorff spaces, and let $f\:X\rightarrow Y$ be a finite-to-one continuous map. The \defn{degree} of $f$ is 
\begin{equation}
    \deg(f)\=\sup\left\{\card\left(f^{-1}(y)\right) : y\in Y\right\}.
\end{equation}
For $x\in X$, the \defn{local degree} of $f$ at $x$ is
\begin{equation}
    \deg(f;x)\=\inf\limits_{U}\left\{\sup\left\{\card\left(f^{-1}(z)\cap U\right): z\in f(U)\right\}\right\},
\end{equation}
where $U$ ranges over all neighborhoods of $x$.

\begin{definition}[Finite branched covering maps]\label{d:FBC}
Suppose that $X$ and $Y$ are locally compact Hausdorff spaces, and let $f\:X\rightarrow Y$ be a finite-to-one continuous map.
The map $f$ is a \defn{finite branched covering} (abbr: FBC) map if $\deg(f) < +\infty$ and the following conditions are satisfied:
\begin{enumerate}
    \smallskip
    \item[\textnormal{(i)}] The identity
    \begin{equation*}
        \sum\limits_{x\in f^{-1}(y)}\deg(f;x)=\deg(f)
    \end{equation*}
    holds for all $y\in Y$.
    
    \smallskip
    \item[\textnormal{(ii)}]
    For every $x_0 \in X$, there are compact neighborhoods $U$ and $V$ of $x_0$ and $f(x_0)$, respectively, such that the identity
    \begin{equation*}
        \sum\limits_{x\in U,\,f(x)=z}\deg(f;x)=\deg(f;x_0)
    \end{equation*}
    holds for all $z\in V$.
    
\end{enumerate}
\end{definition}

\begin{remark}
We note two consequences of (ii): 
\begin{enumerate}
    \smallskip
    \item[\textnormal{(a)}] Let $W\=f^{-1}(V)\cap U$. The restriction $f|_W\:W\rightarrow V$ is proper and surjective.
    \smallskip
    \item[\textnormal{(b)}] $f^{-1}\left(f(x_0)\right)\cap U=\{x_0\}$.
\end{enumerate} 
\end{remark}

Here are some basic concepts related to FBC maps.
\begin{definition}[Principal value, branch set, and branch value]
   For an FBC map $f\:X\rightarrow Y$, a point $y\in Y$ is a \defn{principal value} if $\card\left(f^{-1}(y)\right)=\deg(f)$. The \defn{branch set} is defined as $B_f\=\{x\in X : \deg(f;x)>1\}$. The set of \defn{branch values} is defined as $V_f\=f(B_f)$.
\end{definition}

We give some basic properties of FBC maps below. Further details can be found in \cite[Section~2.1]{HP09}.

The composition of two FBC maps is a FBC map. The degrees and local degrees multiply under composition. Condition~(ii) in Definition~\ref{d:FBC} implies that if a sequence of points $\{x_n\}$ converges to $x_0$, then $\deg(f;x_n)\leq\deg(f;x_0)$ for all sufficiently large $n\in\N$. It follows that the branch set $B_f$ is closed. The set of principal values is $Y\setminus V_f$.

We record two properties of FBC maps below. See \cite[Lemmas~2.1.2 and~2.1.3]{HP09}.
\begin{prop}[Ha\"issinsky \& Pilgrim \cite{HP09}] \label{p:FBC_1}
Let $X$ and $Y$ be locally compact Hausdorff spaces, and $f\:X\rightarrow Y$ be an FBC map of degree $d$. Then $f$ is open, closed, surjective, and proper. Furthermore, $B_f$ and $V_f$ are nowhere dense.
\end{prop}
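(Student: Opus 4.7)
The plan is to establish the five conclusions in the order stated---surjectivity, properness, closedness, openness, and nowhere-density of $B_f$ and $V_f$---with the local summation identity in Definition~\ref{d:FBC}(ii) as the principal tool throughout. Surjectivity is immediate from condition~(i): $\sum_{x\in f^{-1}(y)}\deg(f;x)=\deg(f)\geq 1$ forces each fiber to be nonempty and finite. For properness, I would fix $y_0\in Y$ with finite fiber $\{x_1,\dots,x_k\}$, invoke condition~(ii) at each $x_i$ to obtain compact neighborhoods $U_i$ of $x_i$ and $V_i$ of $y_0$, and use Hausdorffness to arrange the $U_i$ pairwise disjoint. With $V\=\bigcap_i V_i$, summing the local identities over $i$ gives $\sum_{x\in\bigsqcup_i U_i,\,f(x)=z}\deg(f;x)=\deg(f)$ for every $z\in V$; comparison with condition~(i) applied to $z$ forces $f^{-1}(z)\subseteq\bigsqcup_i U_i$. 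Hence $f^{-1}(V)$ is closed in the compact set $\bigsqcup_i U_i$, so compact; covering any compact $K\subseteq Y$ by finitely many such $V$'s then yields $f^{-1}(K)$ compact. Closedness of $f$ follows from the standard fact that a continuous proper map into a locally compact Hausdorff space is closed. For openness, given an open $O\subseteq X$ and $x_0\in O$, I would shrink the compact neighborhood $U$ from condition~(ii) to lie inside $O$; the summation identity then forces the companion neighborhood $V$ of $f(x_0)$ to satisfy $V\subseteq f(U)\subseteq f(O)$, since $\deg(f;x_0)\geq 1$.

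Closedness of $B_f=\{\deg(f;\cdot)\geq 2\}$ follows from the upper semi-continuity of the local degree noted right after Definition~\ref{d:FBC}, and closedness of $V_f=f(B_f)$ then follows from closedness of $f$. The main obstacle I anticipate is showing $B_f$ has empty interior, for which I plan to prove the stronger claim that every open neighborhood of every $x_0\in X$ contains a point of local degree $1$, by induction on $k\=\deg(f;x_0)$; the base case $k=1$ is trivial. For the inductive step, given $x_0$ with $\deg(f;x_0)=k\geq 2$ and an open neighborhood $O$ of $x_0$, apply condition~(ii) to obtain compact $U\subseteq O$ and compact $V\ni f(x_0)$. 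After further shrinking $V$ so that no preimage in $U$ of any $z\in V$ lies on $\partial U$---which is possible because $f^{-1}(f(x_0))\cap U=\{x_0\}\subseteq\inter(U)$ and $f(\partial U\cap f^{-1}(V))$ is closed in $Y$ by closedness of $f$---the summation identity restricts to $\sum_{x\in W^\circ,\,f(x)=z}\deg(f;x)=k$ for $z\in\inter(V)$, where $W^\circ\=f^{-1}(\inter(V))\cap\inter(U)$ is open.

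If every $x\in W^\circ$ had local degree $k$, then each fiber of $f|_{W^\circ}$ would reduce to a single point, making $f|_{W^\circ}\:W^\circ\rightarrow\inter(V)$ continuous, injective, and (by the openness of $f$ already established) open into the Hausdorff space $\inter(V)$, hence a homeomorphism onto its image, hence of local degree $1$ everywhere---contradicting $k\geq 2$. So some $x_1\in W^\circ$ has $\deg(f;x_1)<k$, and the inductive hypothesis applied to an open neighborhood of $x_1$ inside $O$ produces a point of local degree $1$ in $O$. Nowhere density of $V_f$ will then follow from the identification $Y\setminus V_f=\{y : \card(f^{-1}(y))=\deg(f)\}$: openness comes from condition~(ii) applied at each preimage of local degree $1$, and density from iteratively applying the same inductive device to all preimages of a chosen point in any prescribed nonempty open subset $V'\subseteq Y$ until a fully-principal value is located inside $V'$.
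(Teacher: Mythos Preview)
The paper does not supply its own proof of this proposition; it is recorded from \cite[Lemmas~2.1.2 and~2.1.3]{HP09} without argument, so there is nothing in the paper to compare your attempt against.

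Your proposal is correct and follows the standard route. Two minor comments. First, in the openness step you assert that the compact neighborhood $U$ from condition~(ii) can be shrunk into $O$; this is true, but it is worth noting the mechanism: for any compact neighborhood $U'\subseteq\inte(U)\cap O$ of $x_0$, the set $U\setminus\inte(U')$ is compact and misses $f^{-1}(f(x_0))\cap U=\{x_0\}$, so its (compact, hence closed) image avoids $f(x_0)$, and one shrinks $V$ accordingly. Second, your sketch for the density of principal values (``iteratively applying the same inductive device'') is a bit terse; the clean formalization is to prove by induction on $N\leq d$ that every nonempty open $V'\subseteq Y$ contains a point with at least $N$ preimages: at the inductive step, locate $y_1\in V'$ with exactly $N$ preimages, set up the disjoint $U_i$ and $V\subseteq V'$ as in your properness argument, pick $j$ with $\deg(f;x_j)\geq 2$, and use your $B_f$ argument inside $\inte(U_j)\cap f^{-1}(\inte(V))$ to find $x_j'$ with strictly smaller local degree there; the summation over $U_j$ then forces $f(x_j')\in V'$ to have at least two preimages in $U_j$ and at least one in each other $U_i$, giving $N+1$ in total. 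Also, in the $B_f$ step, injectivity of $f|_{W^\circ}$ on the open set $W^\circ$ already forces local degree $1$; the homeomorphism-onto-image phrasing is not needed.
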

\begin{remark}
    Recall that a map $f\:X\rightarrow Y$ is called proper if, for each compact subset $V$ of $Y$, the inverse image $f^{-1}(V)$ is compact.
\end{remark}

\begin{prop}[Ha\"issinsky \& Pilgrim \cite{HP09}]\label{p:FBC_2}
    Suppose that $X$ and $Y$ are connected, locally connected, locally compact Hausdorff spaces. Let $f\:X\rightarrow Y$ be an open, closed, surjective FBC map. Then the following statements hold:
    \begin{enumerate}
        \smallskip
        \item[\textnormal{(i)}]
        If $V\subseteq Y$ is open and connected, and $U\subseteq X$ is a connected component of $f^{-1}(V)$, then $f|_U\:U\rightarrow V$ is also an FBC map.
        
        \smallskip
        \item[\textnormal{(ii)}]
        If $y\in Y$ and $f^{-1}(y)=\{x_1,\,x_2,\,\dots,\,x_k\}$, then there exist arbitrarily small connected open neighborhood $V$ of $y$ and mutually disjoint connected open neighborhoods $U_i$ of $x_i$ for each $i\in\{1,2,\dots,k\}$, such that $f^{-1}(V)=U_1\cup U_2\cup \dots \cup U_k$
        and that $f|_{U_i}\:U_i\rightarrow V$ is an FBC of degree $\deg(f; x_i)$.
        
        \smallskip
        \item[\textnormal{(iii)}]
        If $f(x)=y$, $\{V_n\}_{n\in\N}$ is a sequence of nested open connected sets with $\bigcap_{n\in\N}\,V_n=\{y\}$, and if $\,\widetilde{V}_n$ is the component of $f^{-1}(V_n)$ containing $x$, then $\bigcap_{n\in\N}\,\widetilde{V}_n=\{x\}$.
    \end{enumerate}
\end{prop}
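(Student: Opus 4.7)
The plan is to prove the three statements in order, each building on the previous, using Proposition~\ref{p:FBC_1} (which gives openness, closedness, properness, surjectivity of $f$) together with the local connectedness of $X$ and $Y$.

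For (i), the first step is to show that $f|_U \: U \rightarrow V$ is surjective. Since $X$ is locally connected, every connected component of the open set $f^{-1}(V)$ is both open and closed in $f^{-1}(V)$. Because $f$ is an open and a closed map (Proposition~\ref{p:FBC_1}), $f(U)$ is open in $V$ (as $U$ is open in $X$) and closed in $V$ (since $U$ is closed in $f^{-1}(V)$ and $f|_{f^{-1}(V)}$ is the restriction of a closed map to a full preimage). Connectedness of $V$ forces $f(U)=V$. Local degrees computed inside $U$ agree with those in $X$, so the fiber identity in Definition~\ref{d:FBC}(i) for $f|_U$ follows from the corresponding identity for $f$ together with the fact that the local degree sum over $f^{-1}(y)\cap U$ is locally constant on $V$. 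Condition~(ii) of Definition~\ref{d:FBC} is inherited by intersecting the compact neighborhoods provided by the FBC structure of $f$ with $U$.

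For (ii), I would apply Definition~\ref{d:FBC}(ii) at each $x_i$ to obtain compact neighborhoods $U_i^\circ$ of $x_i$ and $V_i^\circ$ of $y$ for which the local degree sum identity holds. Intersecting the $V_i^\circ$ and using local connectedness, I would take a connected open neighborhood $V$ of $y$ with compact closure inside $\bigcap_i V_i^\circ$. Closedness of $f$ together with finiteness of $f^{-1}(y)$ lets me shrink $V$ further so that $f^{-1}(V)\subseteq U_1^\circ \cup \dots \cup U_k^\circ$, which separates the preimages. Define $U_i$ as the connected component of $f^{-1}(V)$ containing $x_i$; mutual disjointness is automatic. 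By (i), $f|_{U_i}\:U_i\rightarrow V$ is an FBC, and its degree equals $\deg(f;x_i)$ by Definition~\ref{d:FBC}(ii) applied in $U_i^\circ$. Arbitrarily small such $V$ are obtained by iterating this shrinking.

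For (iii), fix neighborhoods $U_1,\dots,U_k$ of the points in $f^{-1}(y)$ as produced by (ii) with $x \in U_1$. For all sufficiently large $n$, $V_n \subseteq V$, hence $f^{-1}(V_n) \subseteq U_1 \cup \dots \cup U_k$. Since $\tilde V_n$ is connected, contains $x\in U_1$, and the $U_i$ are pairwise disjoint open sets, we conclude $\tilde V_n \subseteq U_1$. Because $f|_{U_1}$ is proper (being FBC), the nested family $\bigl\{\overline{\tilde V_n}\cap U_1\bigr\}_{n\in\N}$ consists of compact sets with nonempty intersection contained in $f|_{U_1}^{-1}(\{y\}) = \{x\}$, so $\bigcap_n \tilde V_n = \{x\}$. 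The main obstacle I anticipate lies in (ii): ensuring that the connected component $U_i$ of $f^{-1}(V)$ containing $x_i$ actually sits inside the preselected compact neighborhood $U_i^\circ$, so that the local degree identity transfers cleanly to $f|_{U_i}$. This requires shrinking $V$ so that no spurious branch of $f^{-1}(V)$ reaches into $U_i^\circ$ from outside, which is where the closedness of $f$ and the local finiteness of $f^{-1}(y)$ from Proposition~\ref{p:FBC_1} become essential.
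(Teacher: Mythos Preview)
The paper does not prove this proposition itself; it is quoted from \cite[Lemmas~2.1.2 and~2.1.3]{HP09}, so there is no in-paper argument to compare against. Your sketches for (i) and (ii) follow the standard route and are essentially correct.

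Your argument for (iii), however, has a genuine gap. You assert that ``for all sufficiently large $n$, $V_n \subseteq V$'', but the hypothesis $\bigcap_n V_n = \{y\}$ does not force the $V_n$ to form a neighborhood basis of $y$. For instance, in $\C$ the sets $V_n \= B(1,1/n)\cup\{\,z : 1<|z|<1+1/n\,\}$ are open, connected, and nested with $\bigcap_n V_n = \{1\}$, yet no $V_n$ is contained in $B(1,1/2)$. Worse, for $f(z)=z^2$ on $\C$ with $x=y=1$ one checks that $f^{-1}(V_n)$ is connected (the annulus $\{\,1<|z|<\sqrt{1+1/n}\,\}$ links the two small preimage disks near $\pm 1$), so $\widetilde V_n = f^{-1}(V_n)$ and hence $-1\in\bigcap_n\widetilde V_n$. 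Thus not only does your step fail, but statement~(iii) as literally written seems to require a stronger reading of ``nested'' (for example $\overline{V_{n+1}}\subseteq V_n$ with $\overline{V_1}$ compact, under which a finite-intersection-property argument does yield $V_n\subseteq V$ eventually and your proof goes through). You should consult the precise formulation in \cite{HP09} before completing this part.
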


Here we give two technical properties of finite-to-one, open, closed, and surjective continuous maps. We skip the proofs since they are standard.
%{\color{red}We skip the proofs since they are standard.}
\begin{prop}\label{p:f to 1,open and close,surjective}
    Let $X$ and $Y$ be topological spaces, and $f\:X\rightarrow Y$ be a finite-to-one, open, closed, and surjective continuous map. If $\,V\subseteq Y$ is open, and $U=f^{-1}(V)$, then $f|_U\:U\rightarrow V$ is finite-to-one, open, closed, and surjective.
\end{prop}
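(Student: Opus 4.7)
The plan is to verify the four properties (finite-to-one, surjective, open, closed) separately for $f|_U\:U\rightarrow V$. The first three are essentially immediate; the only real work is in establishing the closedness.

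For finite-to-one and surjective, the key observation is that $f^{-1}(y)\subseteq f^{-1}(V)=U$ whenever $y\in V$. Hence $(f|_U)^{-1}(y)=f^{-1}(y)$, which is finite by hypothesis, and any $y\in V$ lies in $f(U)$ by surjectivity of $f$ together with the same preimage inclusion. For openness, I would note that $U$ is open in $X$ by continuity of $f$, so sets open in $U$ are open in $X$; their images under $f$ are open in $Y$ and are contained in $f(U)\subseteq V$, hence are open in $V$.

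The crux is closedness. My approach is to use the standard reformulation valid for any closed map $f\:X\rightarrow Y$: if $W\subseteq X$ is open, then
\begin{equation*}
    \{y\in Y : f^{-1}(y)\subseteq W\}=Y\setminus f(X\setminus W)
\end{equation*}
is open in $Y$. Now let $C\subseteq U$ be closed in $U$; to show $f(C)$ is closed in $V$ I would verify that $V\setminus f(C)$ is open in $V$ by producing, for each $y\in V\setminus f(C)$, an open neighborhood in $V$ disjoint from $f(C)$. Given such $y$, the inclusion $f^{-1}(y)\subseteq U$ combined with $f^{-1}(y)\cap C=\emptyset$ gives $f^{-1}(y)\subseteq U\setminus C$. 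Since $U$ is open in $X$ and $C$ is closed in $U$, the set $W\=U\setminus C$ is open in $X$, so the displayed identity furnishes an open neighborhood $N$ of $y$ in $Y$ with the property that $f^{-1}(z)\subseteq W$ for every $z\in N$. Intersecting with $V$ produces the required open neighborhood of $y$ in $V$ avoiding $f(C)$.

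The main (and only) obstacle is the closed part, and even there the difficulty is just recalling the ``$\{y : f^{-1}(y)\subseteq W\}$ is open'' trick; neither finite-to-one nor openness of $f$ is needed for that step, so the proof is genuinely routine once one has this lemma in hand. I would therefore present the argument in the order: finite-to-one, surjective, open, closed, spending essentially all the prose on the closedness calculation.
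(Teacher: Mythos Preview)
Your proof is correct. The paper actually omits the proof of this proposition entirely, remarking only that ``We skip the proofs since they are standard,'' so there is nothing to compare against; your argument is a clean and complete realization of the standard verification the authors had in mind.
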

%\begin{proof}
 %   Since $V$ is open in $Y$, $U$ is open in $X$. Then, by the assumption on $f$, it can be checked that $f|_U$ is finite-to-one, open, and surjective. 
    
%   Next, we show that $f|_U$ is closed. 
%  Let $A\subseteq U$ be closed in $U$, then 
%  \begin{equation*}
%     A=\closure{A}\cap U=\closure{A}\cap f^{-1}(V),
%    \end{equation*}
%    where $\closure{A}$ is the closure of $A$ in $X$. It follows that $f(A)=f(\closure{A})\cap V$. Since $f$ is closed, $f(\closure{A})$ is closed in $Y$, then $f(A)$ is closed in $V$. Thus $f|_U$ is closed.
%\end{proof}

\begin{prop}\label{p:component implies onto}
    Let $X$ and $Y$ be locally connected Hausdorff spaces, and $f\:X\rightarrow Y$ a finite-to-one, open, closed, and surjective continuous map. If $V\subseteq Y$ is open and connected, and $U\subseteq X$ is a connected component of $f^{-1}(V)$, then $U$ is open and $f|_U\:U\rightarrow V$ is finite-to-one, open, closed, and surjective.
\end{prop}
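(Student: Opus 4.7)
The plan is to deduce everything from a single structural observation: in a locally connected Hausdorff space, connected components of open sets are clopen in that open set. Applied to $f^{-1}(V)$, this tells us that each component $U$ of $f^{-1}(V)$ is open in $X$ and that
\[
\overline{U}^X \cap f^{-1}(V) = U,
\]
where $\overline{U}^X$ denotes the closure in $X$. From this identity, most of the conclusions follow formally from the hypotheses on $f$.

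First I would record that $U$ is open in $X$, which is immediate from local connectedness of $X$ and openness of $f^{-1}(V)$. The finite-to-one and open properties of $f|_U$ are then trivially inherited from $f$: finite-to-one is automatic, and for openness one notes that if $W \subseteq U$ is open in $U$ it is also open in $X$ (since $U$ is open), so $f(W)$ is open in $Y$ and contained in $V$, hence open in $V$.

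The main content is surjectivity. I would show $f(U)$ is both open and closed in $V$, then invoke connectedness of $V$. Openness comes from $f$ being open; for closedness, the identity above is the key. Since $f$ is closed, $f(\overline{U}^X)$ is closed in $Y$, so it suffices to verify
\[
f(\overline{U}^X) \cap V = f(U).
\]
The inclusion $\supseteq$ is clear, and for $\subseteq$: if $y = f(x)$ with $x \in \overline{U}^X$ and $y \in V$, then $x \in \overline{U}^X \cap f^{-1}(V) = U$, so $y \in f(U)$. Intersecting with $V$ gives that $f(U)$ is closed in $V$, and since $f(U)$ is nonempty and $V$ is connected, $f(U) = V$.

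Finally, to show $f|_U$ is closed, I would repeat the same trick relativized to a closed subset $C$ of $U$. Taking $\overline{C}^X \subseteq \overline{U}^X$, the analogous identity $f(\overline{C}^X) \cap V = f(C)$ holds: if $y = f(x)$ with $x \in \overline{C}^X$ and $y \in V$, then $x \in \overline{U}^X \cap f^{-1}(V) = U$, so $x \in \overline{C}^X \cap U = C$ (using that $C$ is closed in $U$), whence $y \in f(C)$. Since $f$ is closed on $X$, $f(\overline{C}^X)$ is closed in $Y$, so $f(C)$ is closed in $V$. I do not anticipate a real obstacle; the only place one must be careful is the identification $\overline{U}^X \cap f^{-1}(V) = U$, which rests precisely on local connectedness of $X$ (so that components of the open set $f^{-1}(V)$ are clopen in it) — everything else is formal manipulation of the hypotheses on $f$.
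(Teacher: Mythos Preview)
Your argument is correct and cleanly organized; the key identity $\overline{U}^X \cap f^{-1}(V) = U$ (from local connectedness of $X$) does all the work, and the deductions of openness, surjectivity, and closedness of $f|_U$ from it are sound. The paper itself omits the proof as standard, so there is nothing to compare against; your write-up would serve perfectly well as the missing argument (and in fact uses neither the Hausdorff hypothesis nor the local connectedness of $Y$).
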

%\begin{proof}
%    Since $f$ is continuous, $f^{-1}(V)$ open in $X$. Then since $X$ is locally connected, for each $x\in U$, there exists a connected neighborhood $U_x$ of $x$ such that $U_x\subseteq f^{-1}(V)$, hence $U_x\subseteq U$. So $U$ is open in $X$. Then, it can be checked that $f|_U$ is finite-to-one and open. To show that $f|_U$ is closed, let $A\subseteq U$ be closed in $U$, then
%    \begin{equation*}
%        A=\closure{A}\cap U=\closure{A}\cap f^{-1}(V),
%    \end{equation*}
%    where $\closure{A}$ is the closure of $A$ in $X$. It follows that $f(A)=f(\closure{A})\cap V$. Since $f$ is closed, $f(A)$ is closed in $V$. Since $f(U)$ is open and closed in $V$, $f(U)=V$. Thus $f|_U$ is surjective.
%\end{proof}

%end FBC

%begin CXC

\subsection{Topological CXC systems} \label{ss:CXC}

In this subsection, we recall \defn{topological CXC systems}. We refer the reader to \cite[Section~2.2]{HP09} for more details.

\subsubsection{Basic assumptions and the definition}\label{sss:basic assump}
Let $\fX_0$ and $\fX_1$ be topological spaces satisfying the following assumptions:
\begin{enumerate}
    \smallskip
    \item[\textnormal{(i)}] $\fX_0$ and $\fX_1$ are Hausdorff, locally compact, locally connected spaces, each with finitely many connected components.
    \smallskip
    \item[\textnormal{(ii)}]
    $\fX_1$ is a open subset of $\fX_0$ and $\overline{\fX_1}$ is compact in $\fX_0$.
\end{enumerate}

Let $f\:\fX_1\rightarrow\fX_0$ be an FBC map of degree $\deg(f)=d\geq2$. For each $n\in\N_0$ we define 
\begin{equation*}
	\fX_{n+1}\=f^{-1}(\fX_n)
\end{equation*}
and the \defn{repellor} is defined as 
\begin{equation*}
	X\=\{x\in\fX_1\,:\,f^n(x)\in\fX_1 \mbox{ for all } n\in\N_0\,\}.
\end{equation*}
In addition, we make the following technical assumption
\begin{enumerate}
	\smallskip
    \item[\textnormal{(iii)}] $f|_X\:X\rightarrow X$ is an FBC map of degree $d$.
\end{enumerate}
Note that, in this paper, when $f\:\fX_1\to\fX_0$ and the repellor $X$ are clear from the context, we write $f_X\=f|_X$ for convenience.

Referring to \cite[Section~2.2]{HP09}, the following properties hold for $X$ and $\fX_n$, $n\in\N$.
\begin{enumerate}
    \smallskip
    \item[\textnormal{(1)}]
    $f|_{\fX_{n+1}}\:\fX_{n+1}\rightarrow \fX_n$ is an FBC map of degree $d$. 
    \smallskip
    \item[\textnormal{(2)}]
    $\overline{\fX_{n+1}}\subseteq\fX_n$, and  $\overline{\fX_{n+1}}$ is compact in $\fX_n$ since $f$ is proper.
    \smallskip
    \item[\textnormal{(3)}]
    $X$ is totally invariant, i.e., $f^{-1}(X)=X=f(X)$.
    \smallskip
    \item[\textnormal{(4)}]
    For each $k\in\N$, $X=\bigcap\limits_{n\in\N}\overline{\fX_n}
    =\bigcap\limits_{n\in\N}\overline{\fX_{kn}}$, and as a consequence $f$ and $f^k|_{\fX_k}\:\fX_k\rightarrow\fX_0$ have the same repellor $X$.
    \smallskip
    \item[\textnormal{(5)}]
    The definition of the repellor $X$ and the compactness of $\closure{\fX_1}$ implies that given any open set $Y\supseteq X$, $\fX_n\subseteq Y$ for all $n\in\N$ sufficiently large.
\end{enumerate}

\begin{remark}
	In this paper, adopting the terminology on \cite[Page~14]{HP09}, we define a \defn{preimage} under $f$ of a connected set $A$ as a connected component of $f^{-1}(A)$.
\end{remark}

The following is the essential part of the definition of topological CXC systems.

Let $\mathcal{U}_0$ be a finite cover of $X$ by open connected subsets of $\fX_1$. We assume that for each $U\in\cU_0$, $U\cap X\neq\emptyset$. Inductively, we define, for each $n\in\N_0$, 
\begin{equation*}
    \mathcal{U}_{n+1}\=\{\widetilde{U} :\,\mbox{there exists } U\in\cU_n \mbox{ such that } \widetilde{U} \mbox{ is a preimage of } U\}.
\end{equation*}
The elements of $\cU_n$ are connected components of $f^{-n}(U)$, where $U$ ranges over $\cU_0$. We note that, by Proposition~\ref{p:component implies onto}, for each $\tU\in\cU_{n+1}$ and $U\in\cU_n$, if $\tU$ is a preimage of $U$, then $f|_U\:\tU\rightarrow U$ is surjective, and that $f^k(U)\in\cU_{n-k}$ for all $k\in\N_0$ with $k\leq n$. We can see that $\cU_n$ is a finite open cover of $X$ by connected open sets in $\fX_{n+1}$.

We say that $f\:(\fX_1, X)\rightarrow(\fX_0, X)$ is a \defn{topological coarse expanding conformal system} (abbr: topological CXC)  with repellor $X$ if there exists a sequence of finite open covers $\{\mathcal{U}_n\}_{n\in\N_0}$ of $X$ constructed as above, such that the following axioms hold:
\begin{enumerate}
    \smallskip
    \item[\textbf{1.}]\textbf{Expansion Axiom} (abbr: [\textbf{Expans}]): For each finite open cover $\mathcal{V}$ of $X$ by open sets of $\fX_0$, there exists $N\in\N$ such that for each integer $n\geq N$ and each $U\in\mathcal{U}_n$, there exists $V\in\mathcal{V}$ with $U\subseteq V$.
    \smallskip
    \item[\textbf{2.}]\textbf{Irreducibility Axiom} (abbr: [\textbf{Irred}]): For each $x\in X$ and each neighborhood $W$ of $x$ in $\fX_0$, there exists some $n\in\N$ with $f^n(W)\supseteq X$.
    \smallskip
    \item[\textbf{3.}]\textbf{Degree Axiom} (abbr: [\textbf{Deg}]): The set of degrees of maps of the form $f^k|_{\widetilde{U}}\:\widetilde{U}\rightarrow U$, where $U\in\mathcal{U}_n,\ \widetilde{U}\in\mathcal{U}_{n+k}$ and $n,\,k\in\N$ are arbitrary, has a finite maximum denoted by $p$. 
\end{enumerate}

The cover $\cU_n$ will be referred to as the \defn{level-$n$ good open cover} (induced by $\cU_0$). The elements of $\mathcal{U}_n$ will be referred to as \defn{level-$n$ good open sets}.

\subsubsection{Elementary properties} 

%First, we show one consequence of the assumption that $f|_X\: X\rightarrow X$ is an FBC map of degree $d$.

\begin{prop}\label{p:same loc deg}
    Assume that $\fX_0$ and $\fX_1$ satisfy the assumptions in Subsection~\ref{sss:basic assump}. Let $f\:\fX_1\rightarrow\fX_0$ be an FBC map with $\deg(f)\geq2$, and $X$ be the repellor. Suppose that $f_X\=f|_X$ is an FBC map such that $\deg(f_X)=\deg(f)$. Then, for each $x\in X$, we have $\deg(f;x)=\deg(f_X;x)$. Furthermore, $\deg(f^n;x)=\deg(f_X^n;x)$ holds for all $x\in X$ and $n\in\N$.
\end{prop}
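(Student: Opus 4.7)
The plan is to derive the pointwise identity from Definition~\ref{d:FBC}(i) applied to both maps. First I will observe that since $X$ is totally invariant under $f$ (property~(3) in Subsection~\ref{sss:basic assump}), for every $y\in X$ we have $f^{-1}(y)\subseteq X$ and hence $f^{-1}(y)=f_X^{-1}(y)$. Applying Definition~\ref{d:FBC}(i) to $f$ at $y$ (viewed as a point of $\fX_0$) and to $f_X$ at $y$, and using the hypothesis $\deg(f_X)=\deg(f)$, I obtain
$$\sum_{x\in f^{-1}(y)}\deg(f;x)=\deg(f)=\deg(f_X)=\sum_{x\in f^{-1}(y)}\deg(f_X;x).$$
Since $f_X$ is surjective by Proposition~\ref{p:FBC_1}, every $x\in X$ appears in some such preimage fiber.

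The main step is to promote this sum equality to a pointwise equality, for which I will establish the inequality $\deg(f_X;x)\leq \deg(f;x)$ for each $x\in X$ directly from the definition of local degree. Every open neighborhood of $x$ in $X$ is the trace $U\cap X$ of an open neighborhood $U$ of $x$ in $\fX_1$. For each $z\in f(U\cap X)\subseteq f(U)$, the inclusion $f_X^{-1}(z)\cap U\cap X\subseteq f^{-1}(z)\cap U$ yields
$$\sup_{z\in f(U\cap X)}\card\bigl(f_X^{-1}(z)\cap U\cap X\bigr)\leq \sup_{z\in f(U)}\card\bigl(f^{-1}(z)\cap U\bigr),$$
and taking the infimum over $U$ gives the inequality. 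Combined with the sum equality above and nonnegativity of each summand, I conclude that $\deg(f;x)=\deg(f_X;x)$ for every $x\in f^{-1}(y)$, and ranging $y$ over $X$ exhausts $X$. The main technical nuance is the careful matching of neighborhoods under the subspace topology; once this is unpacked, the rest is bookkeeping.

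For the statement on iterates, I will invoke multiplicativity of local degrees under composition of FBC maps, a standard property recorded in Subsection~\ref{ss:Finitebranchedcovering}. The map $f^n|_{\fX_n}\:\fX_n\rightarrow\fX_0$ is FBC by property~(1) in Subsection~\ref{sss:basic assump}, and $f_X^n$ is FBC as a composition of FBC maps, with degrees $\deg(f^n|_{\fX_n})=\deg(f)^n=\deg(f_X)^n=\deg(f_X^n)$. For $x\in X$, total invariance gives $f^i(x)\in X$ and $f_X^i(x)=f^i(x)$ for all $0\leq i<n$, so
$$\deg(f^n;x)=\prod_{i=0}^{n-1}\deg(f;f^i(x))=\prod_{i=0}^{n-1}\deg(f_X;f^i(x))=\deg(f_X^n;x),$$
where the middle equality applies the $n=1$ case at each $f^i(x)$, completing the argument.
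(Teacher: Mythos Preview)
Your proof is correct and follows essentially the same approach as the paper's own proof: both use total invariance to identify the fibers $f^{-1}(y)=f_X^{-1}(y)$, combine the sum identity from Definition~\ref{d:FBC}(i) with the termwise inequality $\deg(f_X;x)\leq\deg(f;x)$ to force equality, and then invoke multiplicativity of local degrees for the iterate statement. The only difference is that you spell out the neighborhood argument for $\deg(f_X;x)\leq\deg(f;x)$ explicitly, whereas the paper simply asserts it.
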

\begin{proof}
    Fix an arbitrary $x\in X$ and let $y\=f(x)=f_X(x)$. By the definition of FBC maps, we have
    \begin{equation}\label{e:sum of loc deg}
        \sum_{z\in f^{-1}(y)}\deg(f;z)=\deg(f)=\deg(f_X)=\sum_{z\in f_X^{-1}(y)}\deg(f_X;z).
    \end{equation}
    By the total invariance of $X$, we have $f^{-1}(y)=f_X^{-1}(y)$. Since $\deg(f_X;z)\leq\deg(f;z)$ holds for each $z\in X$, we get from (\ref{e:sum of loc deg}) that $\deg(f;z)=\deg(f_X;z)$ holds for each $z\in f^{-1}(y)$. Thus $\deg(f;x)=\deg(f_X;x)$.

    Since local degrees multiply under composition and $\deg(f;z)=\deg(f_X;z)$ holds for each $z\in X$, it follows that $\deg(f^n;x)=\deg(f_X^n;x)$ holds for all $x\in X$ and $n\in\N$.
\end{proof}

The following proposition shows that the repellor $X$ has fractal properties (see \cite[Proposition~2.4.2]{HP09}). 
\begin{prop}[Ha\"issinsky \& Pilgrim \cite{HP09}]\label{p:repellors are fractal}
    Let $f\:(\fX_1,X)\rightarrow(\fX_0,X)$ be a topological CXC system, and $\{\cU_n\}_{n\in\N_0}$ be a sequence of good open covers.
    
    Fix an arbitrary $x\in X$ and a neighborhood $W$ of $x$. Then for each $n\in\N_0$ and each $U\in\cU_n$, there exist $m\in\N$ and $\tU\in\cU_{n+m}$ such that the following statements hold:
    \begin{enumerate}
        \smallskip
        \item[\textnormal{(i)}] $f^m\bigl(\tU\bigr)=U$ and $\closure{\tU}\subseteq W$.
        \smallskip
        \item[\textnormal{(ii)}] $\deg\bigl(f^m|_{\tU}\bigr)\leq \frac{p}{\deg(f^n|_U)}$, where $p$ is the constant from {\rm[{\bf Deg}]}.
    \end{enumerate}
\end{prop}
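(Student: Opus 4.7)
The plan is to combine \textbf{[Irred]} to produce a preimage of $U$ near $x$, \textbf{[Expans]} to force that preimage into $W$, and \textbf{[Deg]} together with multiplicativity of degrees under composition of FBC maps to obtain the sharp bound in (ii).

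First, using local compactness and Hausdorffness of $\fX_0$, I would choose nested open neighborhoods $W_1 \subseteq W_2 \subseteq W$ of $x$ satisfying $\closure{W_1} \subseteq W_2$ and $\closure{W_2} \subseteq W$. Applying the Expansion Axiom to the finite open cover $\cV \= \{W_2,\, \fX_0 \setminus \closure{W_1}\}$ of $\fX_0$ yields $N \in \N$ such that for every $\ell \geq N$, each element of $\cU_\ell$ is contained in either $W_2$ or $\fX_0 \setminus \closure{W_1}$; consequently any such element meeting $W_1$ has closure contained in $\closure{W_2} \subseteq W$. Next, the Irreducibility Axiom applied to $x$ and $W_1$ furnishes some $k_0 \in \N$ with $f^{k_0}(W_1) \supseteq X$, and since $f(X) = X$ by property~(3) of Subsection~\ref{sss:basic assump} this persists as $f^k(W_1) \supseteq X$ for all $k \geq k_0$. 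I would then fix $m \geq \max\{k_0,\, N - n,\, 1\}$, pick $y \in U \cap X$ (nonempty because $\cU_n$ covers $X$), and choose $z \in W_1 \cap \fX_m$ with $f^m(z) = y$; a direct check using total invariance of $X$ shows $z \in X$. Letting $\tU$ be the connected component of $f^{-m}(U)$ containing $z$, we have $\tU \in \cU_{n+m}$ by construction, Proposition~\ref{p:component implies onto} gives $f^m(\tU) = U$, and $z \in \tU \cap W_1$ together with $n + m \geq N$ forces $\closure{\tU} \subseteq W$, establishing (i).

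For (ii), set $U_0 \= f^n(U) \in \cU_0$, so that $f^{n+m}|_{\tU}\: \tU \to U_0$ factors as $(f^n|_U) \circ (f^m|_{\tU})$. The Degree Axiom gives $\deg(f^{n+m}|_{\tU}) \leq p$, while multiplicativity of degrees under composition of FBC maps yields $\deg(f^{n+m}|_{\tU}) = \deg(f^n|_U) \cdot \deg(f^m|_{\tU})$; dividing delivers the claimed bound $\deg(f^m|_{\tU}) \leq p/\deg(f^n|_U)$. The main technical obstacle is translating [Expans]'s open-set containment into the closure containment demanded by (i), which is precisely what forces the three-layer nesting $W_1 \subseteq W_2 \subseteq W$ and the choice of cover $\{W_2,\, \fX_0 \setminus \closure{W_1}\}$; once this is set up, [Irred] supplies a preimage in the correct location and [Deg] with multiplicativity finishes the degree estimate.
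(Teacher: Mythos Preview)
Your argument is correct. The paper does not supply its own proof of this proposition---it is quoted from \cite{HP09} (Proposition~2.4.2 there)---so there is no in-paper proof to compare against. Your approach (use \textbf{[Expans]} on a two-set cover to control closures, \textbf{[Irred]} to land a preimage near $x$, and multiplicativity of FBC degrees for the bound) is exactly the standard one.

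One small point of justification: you write that $U \cap X \neq \emptyset$ ``because $\cU_n$ covers $X$,'' but covering $X$ does not by itself force every element to meet $X$. The correct reason is inductive: for $U \in \cU_0$ this is assumed, and if $U \in \cU_n$ with $U \cap X \ni y$ and $\tU \in \cU_{n+1}$ is a preimage of $U$, then surjectivity of $f|_{\tU}\:\tU \to U$ (Proposition~\ref{p:component implies onto}) gives $z \in \tU$ with $f(z) = y$, and total invariance of $X$ forces $z \in X$. This is routine and does not affect the validity of your proof.
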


For a topological CXC system $f\:(\fX_1,X)\rightarrow(\fX_0,X)$, the \defn{post-branch set} is defined to be
\begin{equation*}
    P_f\=X\cap\closure{\bigcup_{n\in\N}V_{f^n}},
\end{equation*}
and we denote
\begin{equation*}
    \post(f)\=X\cap\bigcup_{n\in\N}V_{f^n}.
\end{equation*}
We record the following property of the post-branch set; see \cite[Proposition~2.4.3]{HP09}.
\begin{prop}[Ha\"issinsky \& Pilgrim \cite{HP09}]\label{p:Pf nowhere dense}
    Let $f\:(\fX_1,X)\rightarrow(\fX_0,X)$ be a topological CXC system.
    Then the post-branch set $P_f$ is a nowhere dense subset of $X$.
\end{prop}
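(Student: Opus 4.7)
The plan is to argue by contradiction. Suppose $P_f$ has nonempty interior in $X$, i.e., there exists an open set $W \subseteq \fX_0$ with $\emptyset \neq W \cap X \subseteq P_f$. My strategy is to propagate this inclusion via the homeomorphic self-similarity given by Proposition~\ref{p:repellors are fractal}, spread it to all of $X$ via [{\bf Irred}], obtaining $P_f = X$, and then contradict this by exhibiting a point of $X \setminus P_f$.

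First, I would note that $P_f$ is forward-invariant: from $V_{f^{n+1}} = V_f \cup f(V_{f^n})$ one deduces $f\bigl(\bigcup_{n} V_{f^n}\bigr) \subseteq \bigcup_{n} V_{f^n}$, and continuity of $f$ combined with $f(X) \subseteq X$ gives $f(P_f) \subseteq P_f$. Using [{\bf Deg}], I fix $U^* \in \cU_{n^*}$ with $\deg(f^{n^*}|_{U^*}) = p$, and apply Proposition~\ref{p:repellors are fractal} with $n = n^*$, $U = U^*$, and some $x \in W \cap X$ having $W$ as a neighborhood; this produces $m \in \N$ and $\tU \in \cU_{n^*+m}$ with $\closure{\tU} \subseteq W$, $f^m(\tU) = U^*$, and $\deg(f^m|_{\tU}) \leq p/p = 1$. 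Hence $f^m|_{\tU}$ is an FBC map of degree one, so a homeomorphism onto $U^*$. Total invariance $f^{-m}(X) = X$ restricts this homeomorphism to a bijection $\tU \cap X \to U^* \cap X$, and forward invariance yields $U^* \cap X = f^m(\tU \cap X) \subseteq f^m(P_f) \subseteq P_f$. Applying the surjective FBC map $f^{n^*}|_{U^*}\: U^* \to V^* \in \cU_0$ (which, by total invariance and Proposition~\ref{p:FBC_1}, maps $U^* \cap X$ onto $V^* \cap X$) similarly gives $V^* \cap X \subseteq P_f$. Then [{\bf Irred}] applied to any $x_0 \in V^* \cap X$ with neighborhood $V^*$ produces $N \in \N$ with $f^N(V^*) \supseteq X$; each $y \in X$ has a preimage $z \in V^* \cap X \subseteq P_f$ by total invariance, and forward invariance yields $y = f^N(z) \in P_f$. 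Hence $P_f = X$.

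The main obstacle is now to contradict $P_f = X$. My approach is to construct a point $y_\infty \in X$ together with an open neighborhood in $\fX_0$ disjoint from $\bigcup_{n} V_{f^n}$. Using that each $V_{f^n}$ is closed and nowhere dense in $\fX_0$ (Proposition~\ref{p:FBC_1}), one builds inductively a nested sequence of good open sets $\tU_k \in \cU_{n_k}$ with $\closure{\tU_{k+1}} \subseteq \tU_k$ and $\closure{\tU_k} \cap \bigl(V_f \cup \cdots \cup V_{f^k}\bigr) = \emptyset$; by [{\bf Expans}] the $\tU_k$ shrink in diameter, and compactness of $X$ yields some $y_\infty \in \bigcap_{k} (\closure{\tU_k} \cap X)$. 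The delicate point is to ensure some $\tU_K$ avoids every $V_{f^n}$, not merely those with $n \leq K$; here I would choose each $\tU_k$ via Proposition~\ref{p:repellors are fractal} as a homeomorphic image (through an iterate of $f$) of a fixed maximal-degree $U^* \in \cU_{n^*}$, so that its branching structure is governed by that of $U^*$ alone, and then exploit [{\bf Deg}] to conclude that some $\tU_K$ is itself disjoint from $\bigcup_{n} V_{f^n}$, providing the required neighborhood of $y_\infty$ and contradicting $y_\infty \in P_f = X$.
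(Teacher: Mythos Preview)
The paper does not prove this proposition; it merely records it and cites \cite[Proposition~2.4.3]{HP09}. So there is no paper proof to compare against, and I evaluate your proposal on its own merits.

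Your reduction in steps 1--4 is correct: forward invariance of $P_f$, Proposition~\ref{p:repellors are fractal}, and [\textbf{Irred}] together show that if $P_f$ has nonempty interior in $X$ then $P_f=X$. The gap is in step~5. You correctly sense that [\textbf{Deg}] and the maximal-degree set $U^*$ are the right tools, but the nested Baire-type construction is a detour and you never supply the argument that ``some $\tU_K$ is itself disjoint from $\bigcup_n V_{f^n}$.'' The missing observation is this: since $\deg(f^{n^*}|_{U^*})=p$, for \emph{every} $k\geq 1$ and \emph{every} preimage $\tU\in\cU_{n^*+k}$ of $U^*$ one has $\deg(f^{n^*+k}|_{\tU})=\deg(f^{n^*}|_{U^*})\cdot\deg(f^k|_{\tU})\leq p$, forcing $\deg(f^k|_{\tU})=1$. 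Hence no point of $f^{-k}(U^*)$ lies in $B_{f^k}$, so $U^*\cap V_{f^k}=\emptyset$ for all $k\geq 1$. Thus $U^*$ is an open set disjoint from $\bigcup_n V_{f^n}$, and $U^*\cap X$ (which is nonempty) is disjoint from $P_f$.

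Once you see this, the nested sequence is unnecessary: $U^*$ itself already furnishes the required open set. In fact it contradicts your step~3 conclusion $U^*\cap X\subseteq P_f$ directly, so steps~4--5 can be dropped. Better still, this observation gives a direct proof without contradiction: by Proposition~\ref{p:repellors are fractal}, every neighborhood of every $x\in X$ contains a preimage $\tU$ of $U^*$, and the same degree argument shows $\tU\cap\bigcup_n V_{f^n}=\emptyset$; hence $X\setminus P_f$ is dense in $X$.
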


%end CXC

%begin metric CXC
\subsection{Metric CXC systems} \label{ss:metric CXC}
\subsubsection{The definition}
With topological CXC systems recalled above, we give a brief introduction to \defn{metric CXC systems}. We refer the reader to \cite[Section~2.5]{HP09} for more details.

%begin Roundness
First, we recall the notion of \defn{roundness}. Let $Z$ be a metric space and $A$ be a bounded proper subset of $Z$ with a nonempty interior. For each $a\in $ int$(A)$, denote
\begin{equation*}
    \begin{aligned}
        L(A,a)&\=\sup\{ \abs{b-a}:b\in A\}>0,\\
        l(A,a)&\=\sup\{r : r\leq L(A,a) \mbox{ and } B(a,r)\subseteq A\}>0.
    \end{aligned}
\end{equation*}
Then the \defn{roundness} of $A$ is defined as
\begin{equation*}
	\Round(A,a)\=L(A,a)/l(A,a).
\end{equation*}
Note that $\Round(A,a)\in [1,+\infty)$.

For a number $K>0$, we say that $A$ is \defn{$K$-almost round} if $\Round(A,a)\leq K$ for some $a\in A$. If $A$ is $K$-almost round, then there exists a real number $s>0$ such that $B(a,s)\subseteq A \subseteq \closure{B(a, Ks)}$.
%end Roundness

Suppose that we have a topological CXC system $f\:\fX_1\rightarrow\fX_0$ with repellor $X$ and a sequence of good open covers $\{\cU_n\}_{n\in\N_0}$. Assume that $\fX_0$ is endowed with a metric compatible with its topology. Note that each $U$ in $\bigcup_{n\in\N_0} \cU_n$ is bounded, since $\overline{\fX_1}$ is compact in $\fX_0$. Then $f$ is called a \defn{metric CXC system} if it satisfies the following two axioms:
\begin{enumerate}
    \smallskip
    \item[\textbf{4.}]\textbf{Roundness Distortion Axiom} (abbr: [\textbf{Round}]): There exist continuous increasing embeddings $\rho_+\:[1,+\infty)\rightarrow [1,+\infty)$ and $\rho_-\:[1,+\infty)\rightarrow [1,+\infty)$ such that 
    for all 
    $n,\,k \in \N_0$, $U\in\cU_n$, $\tU\in\cU_{n+k}$, $y\in U$, and $\ty\in\tU$, if $f^k\bigl(\tU\bigr)=U$ and $f^k(\ty)=y$, then
    \begin{equation*}
    \begin{aligned}
        \Round\bigl(\tU,\ty\bigr)&<\rho_-(\Round(U,y)),\\
        \Round(U,y)&<\rho_+\bigl(\Round\bigl(\tU,\ty\bigr)\bigr).
    \end{aligned}
    \end{equation*}
    
    \smallskip
    \item[\textbf{5.}]\textbf{Diameter Distortion Axiom} (abbr: [\textbf{Diam}]): There exist increasing homeomorphisms 
    $\delta_+\:[0,1]\rightarrow[0,1]$ and $\delta_-\:[0,1]\rightarrow[0,1]$ such that for all $n_0,\,n_1,\,k\in\N_0$, $U\in\cU_{n_0}$, $U'\in\cU_{n_1}$, $\tU\in\cU_{n_0+k}$, and $\tU'\in\cU_{n_1+k}$, if $\tU'\subseteq\tU$, $U'\subseteq U$, $f^k\bigl(\tU\bigr)=U$, and $f^k\bigl(\tU'\bigr)=U'$, then
    \begin{equation*}
    \begin{aligned}
        {\diam\tU'}/{\diam\tU}&<\delta_-({\diam U'}/{\diam U}),\\
        {\diam U'}/{\diam U}&<\delta_+\bigl({\diam \tU'}/{\diam \tU}\bigr).
    \end{aligned}
    \end{equation*}
\end{enumerate}

%end metric CXC

%begin useful propositions
\subsubsection{Regularity of metric CXC systems} 
In this subsection, we recall some properties of metric CXC systems. The results are recorded below, and more details can be found in \cite[Section~2.6]{HP09}.

%\begin{prop}[Ha\"issinsky \& Pilgrim \cite{HP09}]\label{p:positive diam}
%    Let $D_0$ denote the minimum diameter of a connected component of $\fX_0$. Then for each ball $B(a,r)$ in $\fX_0$ where $r\leq D_0/2$, we have $\diam B(a,r)\geq r$.
%\end{prop}
The following proposition summarizes the roundness properties given by \cite[Propositions~2.6.2 and 2.6.3]{HP09}.

\begin{prop}[Ha\"issinsky \& Pilgrim \cite{HP09}] \label{p:roundness}
    Let $f\:(\fX_1,X)\rightarrow(\fX_0,X)$ be a metric CXC system, and $\{\cU_n\}_{n\in\N_0}$ be a sequence of good open covers. Then there exists a constant $K>1$ and non-increasing sequences $\{c_n\}_{n\in\N_0}$ and $\{d_n\}_{n\in\N_0}$ of positive real numbers converging to $0$ such that the following statements hold for all $n\in\N_0$ and $x\in X$:
    \begin{enumerate}
    	\smallskip
    	\item[\textnormal{(i)}] $0<c_n\leq\inf \{ \diam U : U\in\cU_n \} \leq \sup \{ \diam U : U\in\cU_n \} \leq d_n$.
    	
        \smallskip
        \item[\textnormal{(ii)}] There exists $U\in\cU_n$ such that U is K-almost round with respect to $x$.
        
        \smallskip
        \item[\textnormal{(iii)}] If $U\in\cU_n$, then there exists $\tx\in X$ such that $\Round(U,\tx) < K$.
        
        \smallskip
        \item[\textnormal{(iv)}] If $0<r<\frac{1}{2K}\min\{\diam U:U\in\cU_n\}$, then there exist $U\in\cU_n$ and $s>r$ such that
        \begin{equation*}
        	B(x,r)\subseteq B(x,s)\subseteq U\subseteq B(x,Ks).
        \end{equation*}
        In particular, let $\delta_n$ be the Lebesgue number of $\cU_n$, then $\delta_n\geq\frac{c_n}{2K}$. 
    \end{enumerate}
\end{prop}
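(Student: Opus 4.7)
The plan is to prove (i) first, then develop a uniform roundness bound at level $0$ that bridges the topological and metric axioms, and then to apply this to deduce (ii) and (iii); (iv) will follow as a direct consequence of (i) and (ii). For (i), I set $s_n \= \sup\{\diam U : U \in \cU_n\}$ and $\iota_n \= \inf\{\diam U : U \in \cU_n\}$, and apply the [\textbf{Expans}] axiom to a finite subcover of $X$ by open balls of radius $\varepsilon/2$ to force $\diam U < \varepsilon$ for every $U \in \cU_n$ once $n \geq N(\varepsilon)$; this gives $s_n \to 0$, and then $d_n \= \sup_{m \geq n} s_m$ is the required non-increasing upper bound. For the lower bound, I assume the nontrivial case $\card X > 1$, in which [\textbf{Irred}] rules out isolated points in $X$, so every $U \in \cU_n$ has positive diameter; since $\cU_n$ is finite, $\iota_n > 0$, and setting $c_n \= \min_{m \leq n}\iota_m$ produces the required lower bound, with $c_n \to 0$ following from $\iota_n \leq s_n \to 0$.

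The technical heart is a uniform roundness bound at level $0$. I would consider the function $\phi \: X \rightarrow (0, +\infty)$ defined by
\begin{equation*}
    \phi(y) \= \sup\{r > 0 : B(y, r) \subseteq U \text{ for some } U \in \cU_0 \text{ containing } y\},
\end{equation*}
observe that $\phi$ is lower semicontinuous (nearby points stay in the same $U_0$ with only slightly smaller enclosing ball), and use compactness of $X$ to obtain $r_0 \= \inf_{y \in X} \phi(y) > 0$. Setting $D_0 \= \max\{\diam U_0 : U_0 \in \cU_0\}$ and $K_0 \= D_0/r_0$, for each $y \in X$ I obtain some $U_0 \in \cU_0$ with $\Round(U_0, y) \leq K_0$. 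A separate, easier step fixing one representative $y_{U_0} \in U_0 \cap X$ for each $U_0 \in \cU_0$ gives a finite $K_1 \= \max_{U_0 \in \cU_0} \Round(U_0, y_{U_0})$. I then set $K \= 2\rho_-(\max(K_0, K_1))$, where the factor $2$ provides slack for the strict ball inclusion needed in (iv). For (ii), given $x \in X$ and $n \in \N_0$, I take $y \= f^n(x) \in X$ by total invariance, pick $U_0 \ni y$ with $\Round(U_0, y) \leq K_0$, let $\tU \in \cU_n$ be the connected component of $f^{-n}(U_0)$ containing $x$, and apply [\textbf{Round}] to conclude $\Round(\tU, x) \leq \rho_-(K_0) \leq K/2$. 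For (iii), given $U \in \cU_n$, I set $U_0 \= f^n(U) \in \cU_0$, lift $y_{U_0}$ to some $\tx \in U \cap X$ via surjectivity of $f^n|_U$ (Proposition~\ref{p:component implies onto}), and apply [\textbf{Round}] to get $\Round(U, \tx) \leq \rho_-(K_1) \leq K/2$.

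For (iv), given $x \in X$ and $0 < r < \iota_n/(2K)$, I invoke (ii) to find $U \in \cU_n$ with $\Round(U, x) \leq K/2$, so there is $s > 0$ with $B(x, s) \subseteq U \subseteq \closure{B(x, (K/2)s)} \subseteq B(x, Ks)$. The triangle-inequality bound $\diam U \leq 2 L(U, x) \leq Ks$ then forces $s \geq \diam U/K \geq \iota_n/K > 2r > r$, yielding the required chain $B(x, r) \subseteq B(x, s) \subseteq U \subseteq B(x, Ks)$. The Lebesgue number estimate $\delta_n \geq c_n/(2K)$ follows by letting $r \uparrow c_n/(2K)$ in this construction.

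The main obstacle is the uniform roundness bound at level $0$: the [\textbf{Round}] axiom controls only \emph{relative} roundness distortion across levels, so absolute control must be extracted from the data of $\cU_0$ itself. Converting the purely topological finiteness of $\cU_0$ into the quantitative metric constants $K_0$ and $K_1$ via lower semicontinuity of $\phi$ and compactness of $X$ is the crucial mechanism; without it, [\textbf{Round}] alone could not yield a single constant $K$ working uniformly in (ii)--(iv).
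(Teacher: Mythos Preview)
The paper does not supply its own proof of this proposition: it is recorded verbatim from \cite[Propositions~2.6.2 and~2.6.3]{HP09} and stated without argument. Your proposal therefore cannot be compared against a proof in the paper, but it does stand as a correct self-contained argument, essentially reconstructing what one finds in \cite{HP09}.

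Your overall strategy is sound: extract absolute roundness control at level~$0$ from compactness and finiteness of $\cU_0$, then propagate it to all levels via the backward estimate in [\textbf{Round}]. A few small points deserve tightening. First, in (i) you use that every $U\in\cU_n$ has positive diameter; your appeal to [\textbf{Irred}] shows $X$ has no isolated points, but you should also note that every $U\in\cU_n$ actually meets $X$ (this follows from surjectivity of $f^n|_U\:U\to f^n(U)$, i.e.\ Proposition~\ref{p:component implies onto}, together with $f^n(U)\cap X\neq\emptyset$ and total invariance of $X$). Second, your function $\phi$ need not be finite-valued in a general metric space (balls can ``saturate''), but this is harmless: lower semicontinuity still guarantees the infimum $r_0$ is attained and positive, and you only need $K_0\geq 1$, which you can arrange by setting $K_0\=\max(D_0/r_0,1)$. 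Third, in the Lebesgue-number step you write ``letting $r\uparrow c_n/(2K)$,'' but the cleaner observation is that the set $U$ and the radius $s=l(U,x)$ produced by (ii) depend only on $x$, not on $r$, and you have already shown $s\geq \iota_n/K> c_n/(2K)$; hence $B(x,c_n/(2K))\subseteq B(x,s)\subseteq U$ directly. With these cosmetic fixes the argument is complete.
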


The following proposition summarizes the local comparability and exponential contraction properties given by \cite[Propositions 2.6.4 and 2.6.5]{HP09}.
\begin{prop}[Ha\"issinsky \& Pilgrim \cite{HP09}] \label{p:comparability}
	Let $f\:(\fX_1,X)\rightarrow(\fX_0,X)$ be a metric CXC system, and $\{\cU_n\}_{n\in\N_0}$ be a sequence of good open covers. Then there exist constants $\lambda,\, \theta\in(0,1)$, and $C'>0$ such that for all $n,\,k\in\N_0$, $U'\in\cU_{n+k}$, $U\in\cU_n$, if $U\cap U'\cap X\neq \emptyset$, then the following statements hold:
	\begin{enumerate}
    	\smallskip
        \item[\textnormal{(i)}] $\diam U'/ \diam U \leq C'\theta^k$.
        
        \smallskip
        \item[\textnormal{(ii)}] $\lambda <\diam U'/\diam U < \lambda^{-1}$ if $k=1$.
	\end{enumerate}
	In particular, we may assume that $d_n=C'd_0\theta^n$, where $d_n$ is given in Proposition~\ref{p:roundness}.
\end{prop}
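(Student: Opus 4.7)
The plan is to prove the proposition via two complementary steps: (a) exponential decay of diameters for nested good sets, giving $\diam W/\diam V \leq C\theta^k$ whenever $W \in \cU_{n+k}$ and $V \in \cU_n$ with $W \subseteq V$; and (b) same-level comparability, giving $\diam U \asymp \diam U'$ whenever $U, U' \in \cU_m$ with $U \cap U' \cap X \neq \emptyset$. Both parts of the proposition follow by combining these with Proposition~\ref{p:roundness}.

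For (a), I would first deduce $\mesh(\cU_n) \to 0$ by applying [\textbf{Expans}] to a finite subcover of the compact set $X$ extracted from $\{B(x, \varepsilon/2) : x \in X\}$. Next, fix $N_0$ with $\mesh(\cU_{N_0}) < \frac{1}{2}\min_{V \in \cU_0}\diam V$. For any nested pair $\tV \in \cU_n$ and $\tW \in \cU_{n+N_0}$ with $\tW \subseteq \tV$, set $V \= f^n(\tV) \in \cU_0$ and $W \= f^n(\tW) \in \cU_{N_0}$; then $W \subseteq V$ and $\diam W/\diam V < 1/2$. Applying [\textbf{Diam}] with $n_0 = 0$, $n_1 = N_0$, $k = n$ yields $\diam \tW/\diam \tV < \delta_-(1/2) < 1$. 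Iterating this contraction in blocks of $N_0$ levels, using connected components of iterated preimages to build intermediate nested good sets between $\tV$ and $\tW$, yields the claimed exponential bound in (a).

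For (b), fix $x \in U \cap U' \cap X$. Proposition~\ref{p:roundness}(iv) provides a $K$-almost round $\hU \in \cU_m$ containing $B(x, r)$ with $\diam \hU \in [r, 2Kr]$. Pushing down by $f^m$, the sets $f^m(U), f^m(\hU) \in \cU_0$ both contain $f^m(x) \in X$; among the finitely many such pairs the diameter ratio is universally bounded, and [\textbf{Round}] controls the roundness at level $m$ while [\textbf{Diam}] applied to small nested subsets inside $U \cap \hU$ lifts this bound to $\diam U \asymp \diam \hU$. Symmetrically $\diam U' \asymp \diam \hU$, hence $\diam U \asymp \diam U'$. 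Parts (i) and (ii) then follow: for (i), when $k$ is large enough that $\diam U' < c_n/(2K)$, Proposition~\ref{p:roundness}(iv) forces $U' \subseteq V$ for some $V \in \cU_n$ containing $x$; nested decay from (a) gives $\diam U'/\diam V \leq C_0 \eta^{\lfloor k/N_0 \rfloor}$ with $\eta \= \delta_-(1/2)$, while (b) gives $\diam V \asymp \diam U$, so $\diam U'/\diam U \leq C' \theta^k$ with $\theta \= \eta^{1/N_0}$ (the finitely many small-$k$ cases absorb into $C'$). Part (ii) is the case $k=1$: same-level comparability together with [\textbf{Diam}] at adjacent levels gives the two-sided bound $\lambda < \diam U'/\diam U < \lambda^{-1}$. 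The final claim $d_n = C'd_0\theta^n$ follows from (i) at $n = 0$, since every $U' \in \cU_n$ intersects some $V \in \cU_0$ in $X$.

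The main obstacle is the same-level comparability in (b): the intersection condition in $X$ yields no direct nested configuration for [\textbf{Diam}], so one must interpose auxiliary almost round good sets from Proposition~\ref{p:roundness} and propagate comparisons through [\textbf{Round}] and [\textbf{Diam}]. Uniformity of the constants $\lambda, \theta, C'$ in $n$ must be extracted carefully from the fixed distortion functions $\rho_\pm, \delta_\pm$ and $K$, and the nested-chain iteration in (a) must be constructed via connected components of iterated preimages since $\cU_{n+1}$ need not refine $\cU_n$.
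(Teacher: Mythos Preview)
The paper does not give its own proof of this proposition; it is quoted from \cite[Propositions~2.6.4 and~2.6.5]{HP09} and used as a black box. Your two-step architecture---exponential decay for \emph{nested} good sets via block iteration of [\textbf{Diam}], followed by same-level local comparability to pass from the intersection hypothesis to a nested one---is exactly the route taken in \cite{HP09}, so there is nothing to compare against in the present paper.

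That said, two places in your sketch are thinner than they look. First, in step~(b) you write that ``[\textbf{Diam}] applied to small nested subsets inside $U\cap\hU$ lifts this bound to $\diam U\asymp\diam\hU$,'' but [\textbf{Diam}] only compares ratios for a \emph{single} nested pair and its $f^k$-image; to compare two level-$m$ sets $U$ and $\hU$ through a common point $x$ you need an auxiliary deeper-level set $W\in\cU_{m+j}$ with $x\in W$ and $W\subseteq U\cap\hU$, then apply [\textbf{Diam}] twice (once with $W\subseteq U$ and once with $W\subseteq\hU$), controlling the level-$0$ ratios $\diam f^m(W)/\diam f^m(U)$ and $\diam f^m(W)/\diam f^m(\hU)$ uniformly via the finiteness of $\cU_0$ and the uniform roundness bound. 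You should make this explicit. Second, for the lower bound in~(ii) you invoke ``[\textbf{Diam}] at adjacent levels,'' but there is in general no $V'\in\cU_{n+1}$ with $x\in V'\subseteq U$ to which [\textbf{Diam}] applies directly; one obtains it instead from the same-level comparability at both levels $n$ and $n+1$ together with the upper bound already established (or, as in \cite{HP09}, by a separate argument using the Lebesgue number bound in Proposition~\ref{p:roundness}(iv)). Finally, note that your appeal to Proposition~\ref{p:roundness}(iv) to force $U'\subseteq V$ once $\diam U'<c_n/(2K)$ requires knowing that $d_{n+k}/c_n\to 0$ uniformly in $n$; this needs both the upper bound $d_n\leq C'd_0\theta^n$ and a lower bound $c_n\geq C''\lambda^n$ (the latter appears in this paper only later as Lemma~\ref{l:exp diam lowerbound}, itself relying on~(ii)), so be careful about the logical order.
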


\section{The Assumptions} \label{s:assumption}
We state below the assumptions under which we will develop our theory in most parts of this paper. We will repeatedly refer to such assumptions later.

\begin{assumptions}
\quad
\begin{enumerate}

    \smallskip
    \item[\textnormal{(1)}]  $\fX_0$ and $\fX_1$ are Hausdorff, locally compact, locally connected spaces, each of which has finitely many connected components. $\fX_0$ is endowed with a metric compatible with its topology.
    
    \smallskip
    \item[\textnormal{(2)}] $\fX_1$ is an open subset of $\fX_0$ and $\overline{\fX_1}$ is compact in $\fX_0$.

    \smallskip
    \item[\textnormal{(3)}] $f\:\fX_1\rightarrow\fX_0$ is an FBC map of degree $\deg(f) \eqqcolon d\geq2$ with repellor $X$, and $f|_X\:X\rightarrow X$ is an FBC map of degree $d$.

    \smallskip
    \item[\textnormal{(4)}]  $\cU_0$ is a cover of $X$ by open, connected subsets of $\fX_1$ whose intersection with $X$ is nonempty. $\{\cU_n\}_{n\in\N_0}$ is the sequence of open covers of $X$, where $\cU_n$ is the collection of preimages of elements in $\cU_{n-1}$. Axioms [{\bf Expans}], [{\bf Irred}], [{\bf Deg}], [{\bf Round}], and {\bf [Diam]} hold.
\end{enumerate}
\end{assumptions}

Assumptions (1) through (4) reformulate the definition of metric CXC systems.

We fix some notations for this paper.
In this paper, when $f\:(\fX_1,X)\rightarrow(\fX_0,X)$ and $\{\cU_n\}_{n\in\N_0}$ satisfy the assumptions above, we write $f_X\=f|_X$, and 
\begin{equation}   \label{e:Wn}
	\cW_n\=\{U\cap X : U\in\cU_n\},
\end{equation}
for each $n\in\N_0$.
We note that, in this paper, the terminology \defn{preimage} under $f$ of a connected set $A$ refers to a connected component of $f^{-1}(A)$.
%end Assumptions

%begin Asymptotic h-expansiveness
\section{Asymptotic $h$-expansive} \label{s:asymptotic h-expansiveness}
\subsection{Basic concepts}\label{ss:asymptotic h-expans_basic concept}
In this part, we review some concepts from dynamical systems. We refer the reader to \cite[Chapter~3]{PU10}, \cite[Chapter~9]{Wa82} or \cite[Chapter~20]{KH95} for more detailed studies of these concepts.

Let $Z$ be a compact metric space and $g\:Z\rightarrow Z$ a continuous map. 

Let $\xi=\{A_j : j\in J\}$ and $\eta=\{B_k : k\in K\}$ be two covers of $Z$. We say that $\xi$ is a \defn{refinement} of $\eta$ if for each $A_j\in\xi$, there exists $B_k\in\eta$ such that $A_j\subseteq B_k$. For two covers $\xi$ and $\eta$, the \defn{common refinement} $\xi\vee\eta$ of $\xi$ and $\eta$ is defined as
\begin{equation*}
    \xi\vee\eta\=\{A_j\cap B_k : j\in J,\ k\in K\},
\end{equation*}
which is also a cover of $Z$. Note that if $\xi$ and $\eta$ are both open covers, then $\xi\vee\eta$ is also an open cover. Define $g^{-1}(\xi)\=\bigl\{g^{-1}(A_j) : j\in J\bigr\}$, and we denote for each $n\in\mathbb{N}$,
\begin{equation*}
    \xi_g^n \= \bigvee_{j=0}^{n-1}g^{-j}(\xi)=\xi\vee g^{-1}(\xi)\vee\dots\vee g^{-(n-1)}(\xi).
\end{equation*}

\begin{definition}[Refining sequences of open covers]   \label{d:refining_sequence}
    A sequence of open cover $\{\xi_i\}_{i\in\mathbb{N}_0}$ of a compact metric space $Z$ is a \defn{refining sequence of open covers} if the following conditions are satisfied:
    \begin{enumerate}
        \smallskip
        \item[\textnormal{(i)}] $\xi_{i+1}$ is a refinement of $\xi_i$ for each $i\in\mathbb{N}_0$.
        \smallskip
        \item[\textnormal{(ii)}]
        For each open cover $\eta$ of $Z$, there exists $j\in\mathbb{N}$ such that for each integer $i\geq j$, $\xi_i$ is a refinement of $\eta$. 
    \end{enumerate}
\end{definition}

The topological tail entropy was first introduced by M.~Misiurewicz under the name of ``topological conditional entropy" in \cite{Mi73, Mi76}. Here we adopt the terminology in \cite{Do11} (see \cite[Remark~6.3.18]{Do11}).

\begin{definition}[Topological conditional entropy and topological tail entropy]   \label{d:conditional_tail_entropy}
    Let $Z$ be a compact metric space and $g\:Z\rightarrow Z$ be a continuous map. For each pair of open covers $\xi$ and $\eta$ of $Z$, we denote
    \begin{equation}   \label{e:H(A|B)}
        H(\xi|\eta)=\log\Bigl(
        \max\limits_{A\in\eta}\bigl\{
        \min\bigl\{
        \card \xi_A : \xi_A\subseteq\xi,\ A\subseteq\bigcup\xi_A
        \bigr\}
        \bigr\}
        \Bigr)
    \end{equation}
    and
	\begin{equation}\label{e: h(g,A|B)}
    	h(g,\xi|\eta)\=\lim_{n\to +\infty}
    	\frac{1}{n}H\biggl(
    	\bigvee_{i=0}^{n-1} g^{-i}(\xi)
    	\,\bigg|\,
    	\bigvee_{j=0}^{n-1} g^{-j}(\eta)
    	\biggr).
    \end{equation}

    For a given open cover $\eta$, the \defn{topological conditional entropy} $h(g|\eta)$ of $g$ is defined as     
	\begin{equation}\label{e:top cond entropy}
        h(g|\eta)\=
        \lim_{l\to +\infty} h(g,\xi_l|\eta),
    \end{equation}
    where $\{\xi_l\}_{l\in \mathbb{N}_0}$ is an arbitrary refining sequence of open covers.
    
    The \defn{topological tail entropy} $h^{*}(g)$ of $g$ is defined by
	\begin{equation}\label{e:top tail entropy}
        h^*(g)\=
        \lim_{m\to +\infty}
        \lim_{l\to +\infty} 
        h(g,\xi_l|\eta_m),
    \end{equation}
    where  $\{\xi_l\}_{l\in \mathbb{N}_0}$ and  $\{\eta_m\}_{m\in \mathbb{N}_0}$ are two arbitrary refining sequences of open covers.
    
    We note that the limits in (\ref{e:top cond entropy}) and (\ref{e:top tail entropy}) always exist, and both quantities do not depend on the choices of the refining sequences of open covers.
\end{definition}

It should be noted that $h^*$ is well-behaved under iterations, as it satisfies
\begin{equation}    \label{e:tail_entropy_n}
    h^*(g^n)=nh^*(g)
\end{equation}
for each $n\in\mathbb{N}$ and each continuous map $g\:Z\rightarrow Z$ on compact metric space $Z$.

	\begin{definition}[Forward expansive]
	A continuous map $g\:Z\rightarrow Z$ on compact metric space $Z$ is \defn{forward expansive} if there exists $\delta>0$ such that for all distinct points $x,\,y\in Z$, there exists $n\in\N_0$ for which $d(f^n(x),f^n(y))\geq \delta$.
    \end{definition}

The concept of $h$-expansiveness was introduced by R.~Bowen in \cite{Bo72}. We adopt the formulation in \cite{Mi76} (see also \cite{Do11}).

\begin{definition}[$h$-expansive]
    A continuous map $g\:Z\rightarrow Z$ on compact metric space $Z$ is \defn{h-expansive} if there exists a finite open cover $\eta$ of $Z$ such that $h(g|\eta)=0$.
\end{definition}

\begin{rem}\label{r: expans=>h-expans=>asym h-expans}
	It is shown in \cite{Mi76} that a continuous map $g\:Z\to Z$ on compact metric space $(Z,d)$ is $h$-expansive if and only if 
	$$
	\sup_{x\in Z}\biggl\{ \lim_{\delta\to 0}\limsup_{n\to +\infty}\frac{1}{n}\log r_n(g,\Phi_\epsilon(x),\delta)\biggr\}=0
	$$
	for some $\epsilon>0$. Here $\Phi_\epsilon(x)$ is defined as
	$$
	\Phi_\epsilon(x)\=\bigl\{y\in Z:d(g^n(x),g^n(y))\leq\epsilon\mbox{ for each }n\in\N_0\bigr\},
	$$
	and $r_n(g,\Phi_\epsilon(x),\delta)$ stands for the smallest cardinality of a set that $(n,\delta)$-spans $\Phi_\epsilon(x)$. If $g$ is forward expansive, then there is $\epsilon>0$ such that $\Phi_\epsilon(x)=\{x\}$ for each $x\in Z$, which implies that $g$ is $h$-expansive.
\end{rem}

A weaker property called ``asymptotic $h$-expansiveness" was then introduced by M.~Misiurewicz in \cite{Mi73} (see also \cite{Mi76,Do11}).

\begin{definition}[Asymptotic $h$-expansive]    \label{d:asymp_h_exp}
    A continuous map $g\:Z\rightarrow Z$ on a compact metric space $Z$ is \defn{asymptotically h-expansive} if $h^*(g)=0$.
\end{definition}

\begin{remark}
	It is shown in \cite{Mi76} that a continuous map $g\:Z\to Z$ on compact metric space $Z$ is asymptotic $h$-expansive whenever $g$ is $h$-expansive.
\end{remark}

%end Asymptotic h-expansiveness

%begin Proof of theorem
\subsection{Technical lemmas}\label{ss:asym h-expans_lemmas}
In this subsection, we state and prove some technical lemmas.

\begin{lemma} \label{l:exp diam lowerbound}
Assume that $f\:(\fX_1,X)\rightarrow(\fX_0,X)$ and $\{\cU_n\}_{n\in\N_0}$ satisfy the Assumptions in Section~\ref{s:assumption}. Then there exists a constant $C''>0$ such that for each $n\in\N$,
\begin{equation*}
	\inf\limits_{U\in\cU_n}\diam U\geq C''\lambda^n.
\end{equation*}
Here $\lambda \in (0,1)$ is a constant from Proposition~\ref{p:comparability}. In particular, we may let $c_n=C''\lambda^n$ in Proposition~\ref{p:roundness}.
\end{lemma}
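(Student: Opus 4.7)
The plan is to induct on $n$, using Proposition~\ref{p:comparability}(ii) as the workhorse. The target recursion is
\begin{equation*}
    \inf_{U\in\cU_n}\diam U \;\geq\; \lambda\cdot\inf_{V\in\cU_{n-1}}\diam V
    \qquad(n\geq 1).
\end{equation*}
Iterating this $n$ times yields $\inf_{U\in\cU_n}\diam U\geq\lambda^n\inf_{V\in\cU_0}\diam V$, so the lemma will follow with $C''\=\inf_{V\in\cU_0}\diam V$. This constant is strictly positive by Proposition~\ref{p:roundness}(i) at level $0$, which furnishes $c_0>0$ with $c_0\leq\inf_{V\in\cU_0}\diam V$ (equivalently, $\cU_0$ is a finite collection of open sets meeting $X$, each with positive diameter).

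To prove the recursion, fix an arbitrary $U\in\cU_n$. I first observe that $U\cap X\neq\emptyset$: $U$ is a connected component of $f^{-n}(V_0)$ for some $V_0\in\cU_0$, and by the remarks following the construction of $\cU_n$, the restriction $f^n|_U\:U\to V_0$ is surjective. Picking any $y\in V_0\cap X$ (nonempty by the definition of $\cU_0$) and a preimage $x\in U$ with $f^n(x)=y$, total invariance $f^{-n}(X)=X$ forces $x\in U\cap X$. Since $\cU_{n-1}$ covers $X$, I can then choose $V\in\cU_{n-1}$ with $x\in V$, so that $U\cap V\cap X\ni x$. Applying Proposition~\ref{p:comparability}(ii) with its parameters set to $(n-1,1)$, identifying my $U$ with the proposition's $U'$ and my $V$ with the proposition's $U$, yields
\begin{equation*}
    \diam U \;>\; \lambda\diam V \;\geq\; \lambda\inf_{V'\in\cU_{n-1}}\diam V'.
\end{equation*}
Since $U\in\cU_n$ was arbitrary and $\cU_n$ is finite, passing to the infimum over $U$ on the left delivers the recursion. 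The ``in particular'' statement about redefining the sequence $c_n$ is then immediate from the defining inequality in Proposition~\ref{p:roundness}(i).

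The argument has essentially no obstacle: the entire content is a one-line invocation of Proposition~\ref{p:comparability}(ii). The only mildly delicate point is verifying that $U\cap X\neq\emptyset$ for every $U\in\cU_n$, and that is handled cleanly by combining the surjectivity of $f^n|_U$ onto its level-$0$ ancestor with the total invariance $f^{-n}(X)=X$.
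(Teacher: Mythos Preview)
Your proof is correct and follows essentially the same approach as the paper's: both establish the recursion $\inf_{U\in\cU_n}\diam U\geq\lambda\inf_{V\in\cU_{n-1}}\diam V$ via Proposition~\ref{p:comparability}(ii) and iterate. You are more careful than the paper in explicitly verifying $U\cap X\neq\emptyset$ (the paper simply asserts the existence of $U\in\cU_{n-1}$ with $U\cap U'\cap X\neq\emptyset$), and your choice $C''=\inf_{V\in\cU_0}\diam V$ differs harmlessly from the paper's $C''=\lambda^{-1}\inf_{V\in\cU_0}\diam V$.
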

\begin{proof}
    Fix arbitrary $n\in\N_0$ and $U'\in\cU_{n+1}$, there exists $U\in\cU_n$ such that $U\cap U'\cap X\neq \emptyset$. Thus by Proposition~\ref{p:comparability}~(i),
    \begin{equation*}
    	\lambda<{\diam U'}/{\diam U} .
    \end{equation*}
    Thus we have 
    \begin{equation*}
    	\diam U'
    > \lambda \diam U
    \geq \lambda  \inf\limits_{V\in\cU_n}\diam V.
    \end{equation*}
    Since $U'\in\cU_{n+1}$ is arbitrary, we have 
    \begin{equation*}
   	\inf\limits_{V'\in\cU_{n+1}}\diam V'
    \geq \lambda \inf\limits_{V\in\cU_n}\diam V.
    \end{equation*}
    Thus, by induction, we get 
    \begin{equation*}
    	\inf\limits_{V\in\cU_n}\diam V
    \geq \lambda^{n}\inf\limits_{V^0\in\cU_0}\diam \bigl(V^0\bigr).
    \end{equation*}
    Define $C'' \=  \lambda^{-1} \cdot\inf_{V^0\in\cU_0}\diam \bigl(V^0\bigr)$, and the proof is complete.
\end{proof}
\begin{remark}
    It should be noted that since $c_n\leq d_n$, we have $0<\lambda\leq\theta<1$, where the constant $\theta\in(0,1)$ is from Proposition~\ref{p:comparability}.
\end{remark}

Now we define the notion of \defn{$(m,k)$-bouquet}, which is the union of some elements in $\cU_m$ and $\cU_{m+k}$, and we show that the diameter of a bouquet can be bounded from above exponentially.
\begin{definition}[Bouquet]   \label{d:bouquet}
    Assume that $f\:(\fX_1,X)\rightarrow(\fX_0,X)$ and $\{\cU_n\}_{n\in\N_0}$ satisfy the Assumptions in Section~\ref{s:assumption}.
    For each pair of $m,k\in\mathbb{N}_0$ and each $U^m\in\cU_m$, we denote
    \begin{equation}
        \Bouquet_m^k(U^m)\=\{U^m\}\cup\bigl\{ U^{m+k}\in\cU_{m+k} : U^{m+k}\cap U^m\cap X\neq\emptyset\bigr\}.
    \end{equation}
    The \defn{$(m,k)$-bouquet} centered at $U^m$ is defined as
    \begin{equation}
        \bouquet_m^k(U^m)
        \=\bigcup\Bouquet_m^k(U^m)
         =U^m\cup\bigcup\bigl\{ U^{m+k}\in\cU_{m+k} : U^{m+k}\cap U^m\cap X\neq\emptyset \bigr\}.
    \end{equation}
\end{definition}

\begin{lemma}[An upper bound for the diameter of bouquet]
\label{l:diam bound bouquet}
    Assume that $f\:(\fX_1,X)\rightarrow(\fX_0,X)$ and $\{\cU_n\}_{n\in\N_0}$ satisfy the Assumptions in Section~\ref{s:assumption}. Then
    for each pair $m,k\in\mathbb{N}_0$, and each $U\in\cU_m$, we have
    \begin{equation}
        \diam \bigl( \bouquet_m^k(U) \bigr) < 2(C'+1)^2d_0\theta^m.
    \end{equation}
    Here the constants $d_0>0$, $C' > 0$, and $\theta\in(0,1)$ are the constants from Propositions~\ref{p:roundness} and~\ref{p:comparability}.
\end{lemma}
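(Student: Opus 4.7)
The strategy is a direct triangle inequality argument, anchored in the observation that every member of $\Bouquet_m^k(U)$ other than $U$ itself meets $U\cap X$ by definition, so $U$ serves as a hub through which any two points of the bouquet can be connected. The geometric input is precisely the uniform diameter bound $\sup_{V\in\cU_n}\diam V\leq d_n=C'd_0\theta^n$ from Propositions~\ref{p:roundness} and~\ref{p:comparability}.

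First I would fix arbitrary $x,y\in\bouquet_m^k(U)$ and locate them in some $W_x,W_y\in\Bouquet_m^k(U)$. If $W_x\neq U$, choose an anchor $z_x\in W_x\cap U\cap X$; otherwise set $z_x\=x\in U$. Define $z_y$ analogously. Then, applying the triangle inequality through these anchors,
\begin{equation*}
    \abs{x-y}\;\leq\;\abs{x-z_x}+\abs{z_x-z_y}+\abs{z_y-y}\;\leq\;\diam W_x+\diam U+\diam W_y,
\end{equation*}
where the middle estimate $\abs{z_x-z_y}\leq\diam U$ uses that both anchors lie in $U$.

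Next I would plug in $\diam U\leq d_m$ and $\diam W_x,\,\diam W_y\leq d_{m+k}$, and use the monotonicity of $\{d_n\}_{n\in\N_0}$ together with the explicit form $d_n=C'd_0\theta^n$ to collapse every term to at most $C'd_0\theta^m$. Summing gives $\abs{x-y}\leq 3C'd_0\theta^m$, and taking the supremum over $x,y\in\bouquet_m^k(U)$ yields $\diam\bigl(\bouquet_m^k(U)\bigr)\leq 3C'd_0\theta^m$. It then remains to note the elementary inequality $2(C'+1)^2-3C'=2C'^2+C'+2>0$, which delivers the strict bound $3C'd_0\theta^m<2(C'+1)^2 d_0\theta^m$ as claimed.

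There is no genuine obstacle here; the lemma is essentially triangle-inequality bookkeeping combined with the uniform geometric control already supplied by the axioms [{\bf Round}] and [{\bf Diam}]. The only small point to remember is that every $V\in\Bouquet_m^k(U)\setminus\{U\}$ is guaranteed to share a point of $X$ with $U$, which is exactly what makes the single-hub reduction to $U$ possible.
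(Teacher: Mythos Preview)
Your proposal is correct and takes essentially the same approach as the paper: both arguments route any two points of the bouquet through the hub $U$ via the triangle inequality and then invoke the diameter bounds from Propositions~\ref{p:roundness} and~\ref{p:comparability}. The paper organizes this as a three-case analysis and uses the ratio estimate $\diam U'/\diam U\leq C'\theta^k$ from Proposition~\ref{p:comparability}~(i) to reach $2(C'\theta^k+1)\diam U$ before passing to $2(C'+1)^2 d_0\theta^m$, whereas you bound each summand directly by $d_m=C'd_0\theta^m$ to obtain $3C'd_0\theta^m$; both are well below the stated threshold, and the underlying idea is identical.
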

\begin{proof}
    Fix arbitrary $m,\,k \in \N_0$, $U\in \cU_m$, and  $a,\,b\in\bouquet_m^k(U)$. Then the following statements hold:
    \begin{enumerate}
        \smallskip
        \item[\textnormal{(i)}] If $a,b\in U$, then $\abs{a-b}\leq\diam U$.
        
        \smallskip
        \item[\textnormal{(ii)}] If there exists $U'\in\cU_{m+k}$ with $U\cap U'\cap X\neq\emptyset$ such that $a\in U$, $b\in U'$, then by Proposition~\ref{p:comparability}~(i),
        \begin{equation*}
            {\diam U'}/{\diam U}\leq C'\theta^k,
        \end{equation*}
        and consequently $\abs{a-b}\leq \bigl( C'\theta^k+1 \bigr)\diam U$.

        \smallskip
        \item[\textnormal{(iii)}] 
        If there exist $U',\, U''\in\cU_{m+k}$ with $U\cap U'\cap X\neq\emptyset$ and $U\cap U''\cap X\neq\emptyset$, such that $a\in U'$, $b\in U''$, 
        then by a similar argument as in (ii), we get $\abs{a-b}\leq 2 \bigl( C'\theta^k+1 \bigr) \diam U$.
    \end{enumerate}
    Therefore, $\diam \bigl( \bouquet_m^k(U) \bigr)
      \leq 2\bigl( C'\theta^k+1 \bigr) \diam U
       <     2(C'+1)^2d_0\theta^m$.
\end{proof}

%Assume that $f\:(\fX_1,X)\rightarrow(\fX_0,X)$ and $\{\cU_n\}_{n\in\N_0}$ satisfy the Assumptions in Section~\ref{s:assumption}. We consider a sequence of open covers $\{\cW_n\}_{n\in\mathbb{N}_0}$ of $X$, defined as
%\begin{equation}   \label{e:Wn}
%    \cW_n\=\{U\cap X : U\in\cU_n\}.
%\end{equation}
In what follows, we show some properties of subsets of the form 
\begin{equation}
    A=\bigcap_{i=0}^n f_X^{-i}(W_i^m)
    =\left\{ x\in W_0^m : f_X^i(x)\in W^m_i,\ i\in\{1,2,\dots,n\} \right\},
\end{equation}
where $W_i^m\in\cW_m$ for each $i\in \{0,1,\dots n\}$.
The lemma below is a straightforward consequence of the definitions.

\begin{lemma}[Inverse images of $W^n$]\label{l:inv image of W^n}
    Assume that $f\:(\fX_1,X)\rightarrow(\fX_0,X)$ and $\{\cU_n\}_{n\in\N_0}$ satisfy the Assumptions in Section~\ref{s:assumption}. %Denote $f_X\=f|_X$.
    %Denote $\cW_n \=\{U\cap X : U\in\cU_n\}$ for each $n\in\N_0$.
    Then for each $n\in\N_0$ and each $W^n\in \cW_n$, if $W^n=U^n\cap X$ for some $U^n\in\cU_n$, then
    \begin{equation}
        f_X^{-1}(W^n)=f^{-1}(U^n)\cap X
        =X\cap\bigcup\bigl\{ U^{n+1} \in\cU_{n+1} : f\bigl(U^{n+1}\bigr) = U^{n} \bigr\}.
    \end{equation}
\end{lemma}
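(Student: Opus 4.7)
The statement is essentially unpacking the definitions, so the plan is to verify the two equalities in turn, being careful about which ingredients from Section~\ref{s:Preliminaries} are used at each step.

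For the first equality $f_X^{-1}(W^n) = f^{-1}(U^n) \cap X$, I would start from the definition $f_X = f|_X$, write
\begin{equation*}
f_X^{-1}(W^n) = \{x\in X : f(x)\in U^n\cap X\},
\end{equation*}
and then invoke the total invariance $f(X)=X=f^{-1}(X)$ (property~(3) listed after the basic assumptions in Subsection~\ref{sss:basic assump}) to drop the condition $f(x)\in X$, which is automatic for $x\in X$. This immediately yields $\{x\in X : f(x)\in U^n\} = f^{-1}(U^n)\cap X$.

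For the second equality, I would show the two inclusions separately. For $\supseteq$, if $U^{n+1}\in\cU_{n+1}$ satisfies $f(U^{n+1})=U^n$, then $U^{n+1}\subseteq f^{-1}(U^n)$, so the right-hand side is contained in $f^{-1}(U^n)\cap X$. For $\subseteq$, fix $x\in f^{-1}(U^n)\cap X$. By the definition of $\cU_{n+1}$ given in Subsection~\ref{sss:basic assump}, the elements of $\cU_{n+1}$ are exactly the connected components of $f^{-1}(V)$ as $V$ ranges over $\cU_0$ pulled back $n+1$ times; equivalently, the components of $f^{-1}(V)$ for $V\in\cU_n$. Since $\cU_{n+1}$ covers $X$, there is $U^{n+1}\in\cU_{n+1}$ with $x\in U^{n+1}$, and $U^{n+1}$ is a component of $f^{-1}(V)$ for some $V\in\cU_n$. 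The remark following the inductive definition of $\cU_{n+1}$ (an application of Proposition~\ref{p:component implies onto}) gives $f(U^{n+1})=V$. Since $x\in U^{n+1}$ and $f(x)\in U^n$, and since distinct elements of $\cU_n$ containing $f(x)$ would force $V$ to equal $U^n$ (because $V$ is determined by $f(U^{n+1})$, which contains $f(x)\in U^n$ — one needs here only that $U^{n+1}$ is a component of $f^{-1}(U^n)$ once we identify $V=U^n$), we conclude $V=U^n$, hence $f(U^{n+1})=U^n$.

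The only point that requires attention is that $V$ and $U^n$ must coincide in the second inclusion; this is handled because $U^{n+1}\subseteq f^{-1}(V)$ and $f(x)\in V\cap U^n$ with $U^{n+1}$ a component of $f^{-1}(V)$ implies $U^{n+1}$ is also a component of $f^{-1}(U^n)$ provided $V$ intersects $U^n$ in the image — but cleanly, I would just argue that $x\in U^{n+1}\cap f^{-1}(U^n)$, so the component of $f^{-1}(U^n)$ containing $x$ is an element of $\cU_{n+1}$ and equals $U^{n+1}$ by uniqueness of the component of $f^{-1}\bigl(f(U^{n+1})\bigr)$ containing $x$. No genuine obstacle arises; the lemma is a bookkeeping statement whose content is that the good open covers are compatible with taking inverse images under $f_X$.
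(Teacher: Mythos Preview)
The paper does not actually give a proof of this lemma; it merely records the statement and prefaces it with ``The lemma below is a straightforward consequence of the definitions.'' Your verification of the first equality and of the inclusion $\supseteq$ in the second equality is correct and is exactly what the authors have in mind.

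For the inclusion $\subseteq$, your initial approach---pick some $U^{n+1}\in\cU_{n+1}$ containing $x$ from the cover, then argue that the $V\in\cU_n$ with $f(U^{n+1})=V$ must equal $U^n$---does not work as stated, because elements of $\cU_n$ are allowed to overlap, so $f(x)\in V\cap U^n$ does not force $V=U^n$. You correctly sense this and arrive at the right fix at the end: simply let $U^{n+1}$ be the connected component of $f^{-1}(U^n)$ containing $x$; this is by definition an element of $\cU_{n+1}$, and $f(U^{n+1})=U^n$ by Proposition~\ref{p:component implies onto}. That already finishes the argument; there is no need to reconcile this $U^{n+1}$ with the one chosen earlier from the cover (and indeed they may differ). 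With that simplification your proof is complete and matches the spirit of the paper's one-line remark.
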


We construct a class of open covers for subsets of the form $A=\bigcap_{i=0}^n f_X^{-i}(W_i^m)$.
For all $m \in\N_0$, $n\in\N$, and for an arbitrary choice of $U^m_i\in\cU_m$ for each $i\in\{0,1,\dots,n\}$, we denote
\begin{equation}   \label{e:Em}
  \begin{aligned}
        &E_m(U^m_0,\dots,U^m_{n-1};U^m_n) \\
        &\qquad \= \bigl\{U^{m+n}\in\cU_{m+n} : f^n(U^{m+n})=U^m_n  \text{ and } f^i(U^{m+n})\cap U^m_i\cap X\neq\emptyset \\
        &\qquad\qquad\qquad\qquad\qquad\qquad\qquad\qquad\qquad\qquad \text{ for each } i\in\{0,\dots, n-1\}\bigr\}.
  \end{aligned}  
\end{equation}

\begin{lemma}\label{l:cover by E_m}
    Assume that $f\:(\fX_1,X)\rightarrow(\fX_0,X)$ and $\{\cU_n\}_{n\in\N_0}$ satisfy the Assumptions in Section~\ref{s:assumption}. %Denote $f_X\=f|_X$.
    Then for all $m\in\mathbb{N}_0$, $n\in\N$, and an arbitrary choice of $U^m_i\in\cU_m$ for each $i\in\{0,1,\dots,n\}$, we have
    \begin{equation}
        \bigcap_{i=0}^n f_X^{-i}(U_i^m\cap X)
        \subseteq
        \bigcup_{U^{m+n}\in E_m(U^m_0,\dots,U^m_{n-1};U^m_n)} U^{m+n}\cap X.
    \end{equation}
\end{lemma}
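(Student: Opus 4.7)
The plan is to take an arbitrary $x$ in the left-hand side and produce an element $U^{m+n} \in E_m(U_0^m, \dots, U_{n-1}^m; U_n^m)$ containing $x$. Concretely, fix $x \in \bigcap_{i=0}^n f_X^{-i}(U_i^m \cap X)$, so that $x \in X$ and $f^i(x) \in U_i^m \cap X$ for every $i \in \{0, 1, \dots, n\}$. I would then let $U^{m+n}$ denote the connected component of $f^{-n}(U_n^m)$ that contains $x$; this is exactly what is called a preimage of $U_n^m$ under $f^n$.

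The first thing to verify is that $U^{m+n} \in \cU_{m+n}$. Since $\cU_{m+n}$ is built inductively as the preimages of elements of $\cU_{m+n-1}$ under $f$, a short induction (or a direct application of the fact that the connected components of $f^{-n}(V)$ for $V \in \cU_m$ are exactly the elements of $\cU_{m+n}$ lying over $V$, which follows from the disjointness of connected components at each step) shows that the connected component of $f^{-n}(U_n^m)$ through any point of $X$ is indeed an element of $\cU_{m+n}$. Next, by Proposition~\ref{p:component implies onto} applied iteratively (or Proposition~\ref{p:FBC_2}~(i)), the restriction $f^n|_{U^{m+n}} \colon U^{m+n} \to U_n^m$ is surjective, so $f^n(U^{m+n}) = U_n^m$, which is the first condition in the definition \eqref{e:Em} of $E_m$.

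For the remaining conditions, note that since $x \in U^{m+n}$, the point $f^i(x)$ lies in $f^i(U^{m+n})$ for each $i \in \{0, 1, \dots, n-1\}$. By the choice of $x$, the same point $f^i(x)$ also lies in $U_i^m \cap X$. Therefore
\[
f^i(x) \in f^i(U^{m+n}) \cap U_i^m \cap X,
\]
so this intersection is nonempty for each $i \in \{0, \dots, n-1\}$. Combining, $U^{m+n} \in E_m(U_0^m, \dots, U_{n-1}^m; U_n^m)$, and since $x \in X$ and $x \in U^{m+n}$, we conclude $x \in U^{m+n} \cap X$, which is contained in the right-hand side.

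I do not expect any serious obstacle; the only point requiring a little care is the identification of $\cU_{m+n}$ with the set of connected components of $f^{-n}(V)$ as $V$ ranges over $\cU_m$, which is needed to justify that the connected component of $f^{-n}(U_n^m)$ through $x$ actually belongs to $\cU_{m+n}$. Once this is invoked, the rest is a transparent unpacking of the definition of $E_m$.
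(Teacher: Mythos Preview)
Your proposal is correct and follows essentially the same approach as the paper's proof: take $x$ in the left-hand side, choose the level-$(m+n)$ good open set $U^{m+n}$ that is the preimage of $U^m_n$ containing $x$, and observe that $f^i(x)$ witnesses the nonemptiness of $f^i(U^{m+n})\cap U^m_i\cap X$ for each $i$. The paper is simply terser about the existence of $U^{m+n}$, having already noted in Subsection~\ref{sss:basic assump} that the elements of $\cU_{m+n}$ are exactly the connected components of $f^{-n}(U)$ for $U\in\cU_m$ and that $f^n$ maps each such component onto $U$.
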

\begin{proof}
    For each $x\in\bigcap_{i=0}^n f_X^{-i}(U_i^m\cap X)=\bigl\{ z\in U_0^m\cap X : f_X^i(z)\in U^m_i\cap X$ for each $i\in\{1,2,\dots,n\} \bigr\}$, there exists $U^{m+n}\in\cU_{m+n}$ such that $x\in U^{m+n}$ and $f^n(U^{m+n})=U^m_n$. It follows immediately that $f^i(x)\in f^i(U^{m+n})\cap U^m_i\cap X$ and thus $f^i(U^{m+n})\cap U^m_i\cap X\neq\emptyset$ for each $i\in\{1,\,2,\,\cdots,\,n\}$. Therefore $U^{m+n} \in E_m(U^m_0,\dots,U^m_{n-1};U^m_n)$ and  the proof is now complete.
\end{proof}

The lemma below gives an upper bound for the number of elements in $\cW_{n+k}$ needed to cover an element in $\cW_n$.
\begin{lemma} \label{l:covered by smaller}
    Assume that $f\:(\fX_1,X)\rightarrow(\fX_0,X)$ and $\{\cU_n\}_{n\in\N_0}$ satisfy the Assumptions in Section~\ref{s:assumption}. 
    %Denote $\cW_n \=\{U\cap X : U\in\cU_n\}$ for each $n\in\N_0$.
    Then there exists a constant $T_0\geq 1$ such that for all $n\in\N_0$, $k\in\N$, and $W^n\in\cW_n$, there exists $\cI\subseteq\cW_{n+k}$ with $\card(\cI)\leq (pT_0)^k$ such that 
    \begin{equation*}
    W^n\subseteq \bigcup\cI,
    \end{equation*}
    where $p$ is the constant from {\rm[{\bf Deg}]}.
\end{lemma}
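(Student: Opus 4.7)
My plan is to proceed by induction on $k \in \N$. For the base case $k=1$, fix $W^n = U^n \cap X \in \cW_n$ (with $U^n \in \cU_n$) and consider the natural candidate cover
\begin{equation*}
  \cI_1 \= \{U' \cap X : U' \in \cU_{n+1},\, U' \cap U^n \cap X \neq \emptyset\} \subseteq \cW_{n+1},
\end{equation*}
which covers $W^n$ because $\cU_{n+1}$ is an open cover of $X$. To bound $\card(\cI_1)$ by $pT_0$, I analyze each $U' \in \cI_1$ via its image $f(U') \in \cU_n$: for any $x \in U' \cap U^n \cap X$ one has $f(x) \in f(U') \cap f(U^n) \cap X$, so the candidates for $f(U')$ lie in the set $\{V \in \cU_n : V \cap f(U^n) \cap X \neq \emptyset\}$ (with the trivial case $n = 0$ handled by the finiteness of $\cU_1$). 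The cardinality of this candidate set is uniformly bounded by a constant $T_0 \geq 1$, via a bounded-multiplicity property of the good covers that follows from the roundness axiom (Proposition~\ref{p:roundness}) together with local comparability (Proposition~\ref{p:comparability}). For each such candidate $V$, the number of connected components of $f^{-1}(V)$ meeting $U^n$ is controlled by the FBC restriction $f|_{U^n} \: U^n \to f(U^n)$, whose degree is at most $p$ by [{\bf Deg}]. Combining these two bounds yields $\card(\cI_1) \leq p \cdot T_0$.

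For the inductive step, assume the statement for $k - 1$. Given $W^n \in \cW_n$, apply the base case to obtain a cover $\cI_1 \subseteq \cW_{n+1}$ with $\card(\cI_1) \leq pT_0$. For each $W' \in \cI_1$, the inductive hypothesis at level $n+1$ (with parameter $k - 1$) yields a cover of $W'$ by at most $(pT_0)^{k-1}$ elements of $\cW_{(n+1)+(k-1)} = \cW_{n+k}$. Taking the union of all these sub-covers over $W' \in \cI_1$ produces a cover of $W^n$ by at most $pT_0 \cdot (pT_0)^{k-1} = (pT_0)^k$ elements of $\cW_{n+k}$, as required.

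The main obstacle is the base case, and within it, establishing the finiteness of the multiplicity constant $T_0$ uniformly across levels. Controlling this multiplicity requires careful use of the geometric axioms of metric CXC systems: the Lebesgue number estimate in Proposition~\ref{p:roundness}~(iv), together with the diameter comparability from Proposition~\ref{p:comparability}, ensures that at every level only finitely many good open sets can cluster around any given one, uniformly in $n$. Once this geometric input is secured, the combinatorial counting and the induction on $k$ proceed cleanly.
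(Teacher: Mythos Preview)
Your induction on $k$ and the inductive step are fine and match the paper. The gap is in the base case $k=1$, where both of your two steps are not justified as written.

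\textbf{Step 1 (bounded multiplicity).} You claim that the number of $V\in\cU_n$ with $V\cap f(U^n)\cap X\neq\emptyset$ is uniformly bounded, citing Propositions~\ref{p:roundness} and~\ref{p:comparability}. But those propositions give roundness and diameter comparability of individual good open sets; they do not by themselves bound how many level-$n$ sets can cluster around a given level-$(n-1)$ set, since nothing prevents many $K$-almost-round sets of comparable diameter from overlapping. In fact the quantity you want to bound is $\card\bigl(\Bouquet_{n-1}^1(f(U^n))\bigr)$, which is the same type of quantity you are trying to control, shifted by one level. So the step is circular unless you supply an independent argument.

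\textbf{Step 2 (degree bound).} You assert that the number of components $U'$ of $f^{-1}(V)$ meeting $U^n$ is at most $p$ because $f|_{U^n}\:U^n\to f(U^n)$ has degree $\leq p$. But such a $U'$ need not be contained in $U^n$; it may merely intersect it. The degree of $f|_{U^n}$ counts preimages \emph{inside} $U^n$, so it does not directly bound components that merely overlap $U^n$. The containment $U'\subseteq U^n$ would follow if $V\subseteq f(U^n)$, but you only have $V\cap f(U^n)\cap X\neq\emptyset$.

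The paper resolves both issues simultaneously by pushing forward not by $f$ but by $f^{n-N}$, where $N$ is chosen (via the diameter bound on bouquets, Lemma~\ref{l:diam bound bouquet}) so that every bouquet $\bouquet_N^1(U^N)$ sits inside a single $U^0\in\cU_0$. One then shows $\bouquet_n^1(U^n)\subseteq U^{n-N}$ for some $U^{n-N}\in\cU_{n-N}$; this containment makes the degree argument valid (each relevant $U'\in\cU_{n+1}$ lies entirely in $U^{n-N}$, and distinct preimages of the same level-$(N+1)$ set are disjoint and surject), and the counting at the base level is finite because $N$ is fixed. This use of [{\bf Deg}] over many iterates, together with the containment coming from the shrinking diameter of bouquets, is the missing idea in your outline.
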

%\begin{remark}
%     The lemma says that each $W^n\in\cW_n$ can be covered by $(pT_0)^k$ many elements in $\cW_{n+k}$.
%\end{remark}
\begin{proof}
    By Lemma~\ref{l:diam bound bouquet} there exists $N\in\N$ such that for each $U^{N}\in\cU_{N}$, there exists $U^0\in\cU_0$ such that $\bouquet_{N}^1 \bigl( U^{N} \bigr) \subseteq U^0$. 

    Fix arbitrary integer $n\geq N$ and $U^n\in\cU_n$, and consider $\bouquet_n^1(U^n)$. We have 
    \begin{equation*}
    	f^{n-N}  \bigl( \bouquet_n^1(U^n) \bigr) 
    	\subseteq \bouquet_N^1 \bigl( f^{n-N}(U^n) \bigr)
    	\subseteq U^0
    \end{equation*}
    for some $U^0\in\cU_0$. Since $U^n$ is a preimage under $f^{n-N}$ of $f^{n-N}(U^n)$, there exists a preimage under $f^{n-N}$ of $U^0$, denoted by $U^{n-N}\in\cU_{n-N}$, such that $U^n\subseteq U^{n-N}$. For each $U^{n+1}\in\Bouquet_n^1(U^n) \cap \cU_{n+1}$, there exists $U'\in\cU_{n-N}$ which is a preimage of $U^0$ such that $U^{n+1}\subseteq U'$. Since $U^{n+1}\cap U^n\neq \emptyset$ and different preimages of $U^0$ are disjoint, we have $U'=U^{n-N}$, thus $\bouquet_n^1(U^n)\subseteq U^{n-N}$.

    Consider $f^{n-N}|_{U^{n-N}}\:U^{n-N}\rightarrow U^0$. By [{\bf Deg}], this map is at most $p$ to $1$, thus we have
    \begin{equation*}
        \card\left(\Bouquet_n^1(U^n)\right)
        \leq p\cdot\card\bigl( \Bouquet_N^1\bigl(f^{n-N}(U^n) \bigr) \bigr).
    \end{equation*}
    Let $T_0\=\max\limits_{0\leq i\leq N}\sup \bigl\{ \card \bigl(\Bouquet_i^1(U )\bigr)  :   U \in \cU_i   \bigr\}$,
    then we have
    \begin{equation*}
    \card\left(\Bouquet_n^1(U^n)\right)\leq pT_0 .
    \end{equation*}
    
    Define $W^n \= U^n\cap X$. Then we have
    \begin{equation*}
    \begin{aligned}
        W^n\subseteq &X\cap\bigcup \left\{ 
        U^{n+1}\in\cU_{n+1} : U^{n+1}\in\Bouquet_n^1(U^n) 
        \right\}\\
        =&\left\{ 
        X\cap U^{n+1}\in\cW_{n+1} : U^{n+1}\in\Bouquet_n^1(U^n) 
        \right\}.
    \end{aligned}
    \end{equation*}
    Thus each element of $\cW_n$ can be covered by at most $p T_0$ many elements in $\cW_{n+1}$. Inductively, for every $k\in\N$, each element in $\cW_n$ can be covered by $(p T_0)^k$ many elements in $\cW_{n+k}$. We have established the lemma under the assumption that $n\geq N$.

    Consider now $n\in\{0,\,\dots,\,N\}$. By the definition of $T_0$, each element in $\cW_n$ can be covered by $T_0$ many elements in $\cW_{n+1}$. Thus, $T_0$ is the constant as desired.
\end{proof}
%end subsct

\subsection{Proofs of Theorems~\ref{t:Intro asym h-expans} and \ref{t:Intro_expansive and h-expansive}}\label{ss:pf thm asym & not h-expans}
In this subsection, we will establish the asymptotic $h$-expansiveness of the map $f|_X\:X\rightarrow X$. Our strategy is to prove the result under slightly stronger assumptions (Theorem~\ref{t:asymptotic h-expans}), then we show that for some $N\in\N$, $(f|_X)^N$ satisfies the stronger assumptions, resulting in $h^*\left((f|_X)^N\right)=Nh^*(f|_X)=0$.

Here is the main idea of the proof of Theorem~\ref{t:asymptotic h-expans}. Recall that we construct in Lemma~\ref{l:cover by E_m} a cover $E_m(U_0^m,\cdots,U_{n-1}^m;U_n^m)$ for each set of the form $\bigcap_{i=0}^n f^{-i}{(U_i^m\cap X)}$ by level-$(m+n)$ good open sets, where $m\in\N_0$, $n\in\N$, and each $U_i^m\in\cU_m$ is a level-$m$ good open set. Then, we give an upper bound for $\card(E_m(U_0^m,\cdots,U_{n-1}^m;U_n^m))$ that depends only on $m,\,n$.  Combining Lemma~\ref{l:covered by smaller}, we have that for an arbitrary $A\in\bigvee_{i=0}^{n-1}f^{-i}(\cW_m)$, the number of sets in the open cover $\bigvee_{j=0}^{n-1}f^{-j}(\cW_l)$ needed to cover $A$ can be bounded from above, and such an upper bound leads to the conclusion that the topological tail entropy $h^*(f)=0$. Bouquets are crucially used in the proof.

First, we note that $X$ is indeed a compact metric space. By $X=\bigcap_{n\in\N_0}\closure{\fX_n}$, $X$ is closed in $\closure{\fX_1}$. Since $\closure{\fX_1}$ is compact, $X$ is compact in $\closure{\fX_1}$, thus compact with respect to its own subspace topology.

\begin{lemma}\label{l:Wn_refining}
	Let $f\:(\fX_1,X)\rightarrow(\fX_0,X)$ and $\{\cU_n\}_{n\in\N_0}$ satisfy the Assumptions in Section~\ref{s:assumption}. Assume that for each $n\in\N_0$, $\cU_{n+1}$ is a refinement of $\cU_n$. Then $\{\cW_m\}_{m\in\N_0}$ is a refining sequence of open covers of $X$.
\end{lemma}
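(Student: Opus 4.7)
The plan is to verify directly the two defining conditions of a refining sequence of open covers (Definition~\ref{d:refining_sequence}) for $\{\cW_m\}_{m\in\N_0}$. First, one should note that each $\cW_m$ is indeed an open cover of $X$ in its subspace topology, since each $U\in\cU_m$ is open in $\fX_0$ and $\cU_m$ covers $X$.

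For condition~(i), the refinement hypothesis $\cU_{m+1}\prec\cU_m$ is essentially all that is needed: given $W'\in\cW_{m+1}$, write $W'=U'\cap X$ with $U'\in\cU_{m+1}$, pick $U\in\cU_m$ with $U'\subseteq U$, and take $W\=U\cap X\in\cW_m$, which contains $W'$. So $\cW_{m+1}$ refines $\cW_m$.

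For condition~(ii), I would use the compactness of $X$ together with the [{\bf Expans}] axiom. Given an arbitrary open cover $\eta$ of $X$, by compactness of $X$ one first reduces to a finite subcover $\eta'=\{A_1,\dots,A_r\}\subseteq\eta$. Each $A_i$ is open in $X$, hence of the form $A_i=V_i\cap X$ for some open subset $V_i$ of $\fX_0$. The collection $\cV\=\{V_1,\dots,V_r\}$ is a finite cover of $X$ by open sets of $\fX_0$, so [{\bf Expans}] produces some $N\in\N$ such that for every $n\geq N$ and every $U\in\cU_n$ there is $V_i\in\cV$ with $U\subseteq V_i$. Intersecting with $X$ then gives $U\cap X\subseteq V_i\cap X = A_i\in\eta$, so $\cW_n$ refines $\eta$ for all $n\geq N$, as required.

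The proof is essentially a bookkeeping exercise, and no step presents a serious obstacle: the only subtlety is the reduction from an arbitrary open cover $\eta$ to a finite one, which is immediate from compactness of $X$, and the lifting of elements of $\eta'$ to open sets of $\fX_0$ needed in order to invoke the ambient formulation of [{\bf Expans}].
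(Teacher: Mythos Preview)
Your proposal is correct and follows essentially the same approach as the paper: both verify Definition~\ref{d:refining_sequence} directly, using the refinement hypothesis for condition~(i) and compactness of $X$ together with [{\bf Expans}] for condition~(ii). The only cosmetic difference is the order in which you pass to a finite subcover and lift to ambient open sets, which is immaterial.
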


%Here $\cW_n \=\{U\cap X : U\in\cU_n\}$ for each $n\in\N_0$ is defined (\ref{e:Wn}).

\begin{proof}
 		By the hypothesis on $\cU_n$, $\cW_{n+1}$ is a refinement of $\cW_n$ for each $n\in\N_0$. For each open cover $\cY$ of $X$, set $\cY=\{V\cap X : V\in\cV\}$ where $\cV$ is a cover of $X$ by open subsets of $\fX_1$. Since $X$ is compact, we may assume $\cV$ to be a finite cover. Then by [{\bf Expans}], when $n\in\N$ is sufficiently large, for each $U\in\cU_n$ there exists $V\in\cV$ such that $U\subseteq V$. Thus for each $W\in\cW_n$, there is $Y\in\cY$ such that $W\subseteq Y$. So $\cW_n$ is a refinement of $\cY$. By Definition~\ref{d:refining_sequence}, therefore, $\{\cW_m\}_{m\in\N_0}$ is a refining sequence of open cover of $X$.
\end{proof}

%begin main theorem %begin main theorem %begin main theorem %begin main theorem %begin main theorem
\begin{theorem}\label{t:asymptotic h-expans}
Let $f\:(\fX_1,X)\rightarrow(\fX_0,X)$ and $\{\cU_n\}_{n\in\N_0}$ satisfy the Assumptions in Section~\ref{s:assumption}. 
%Denote $f_X\=f|_X$.
Assume that for each $n\in\N_0$, $\cU_{n+1}$ is a refinement of $\cU_n$. Then $f|_X\:X\rightarrow X$ is asymptotically h-expansive.
\end{theorem}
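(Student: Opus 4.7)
By Lemma~\ref{l:Wn_refining}, $\{\cW_m\}_{m\in\N_0}$ is a refining sequence of open covers of $X$, hence
\[
h^*(f_X)\;=\;\lim_{m\to\infty}\lim_{l\to\infty}\lim_{n\to\infty}\tfrac{1}{n}\,H\!\left(\textstyle\bigvee_{i=0}^{n-1}f_X^{-i}(\cW_l)\,\Big|\,\bigvee_{j=0}^{n-1}f_X^{-j}(\cW_m)\right)\!,
\]
and it suffices to show that this iterated limit vanishes.

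Fix $m<l$ and $n\in\N$, and consider an arbitrary element of $\bigvee_{j=0}^{n-1}f_X^{-j}(\cW_m)$, which has the form $A=\bigcap_{j=0}^{n-1}f_X^{-j}(U_j^m\cap X)$ for some $U_j^m\in\cU_m$. Applying Lemma~\ref{l:cover by E_m} with $n$ replaced by $n-1$ yields
\[
A\;\subseteq\;\bigcup_{V\in E_m(U_0^m,\dots,U_{n-2}^m;U_{n-1}^m)} V\cap X,
\]
a cover of $A$ by level-$(m+n-1)$ good open sets. To pass to a cover by $\bigvee_{i=0}^{n-1}f_X^{-i}(\cW_l)$, I would first check that the refining hypothesis implies $\cW_{l+n-1}$ refines $\bigvee_{i=0}^{n-1}f_X^{-i}(\cW_l)$: indeed, for $V'\in\cU_{l+n-1}$ and each $0\le i\le n-1$, $f^i(V')\in\cU_{l+n-1-i}$ with $l+n-1-i\ge l$, so the refining hypothesis produces $W_i^l\in\cW_l$ with $f^i(V')\cap X\subseteq W_i^l$; hence $V'\cap X\subseteq\bigcap_{i=0}^{n-1}f_X^{-i}(W_i^l)$. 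Combined with Lemma~\ref{l:covered by smaller}, which covers each $V\cap X$ with $V\in E_m(\cdots)\subseteq\cU_{m+n-1}$ by at most $(pT_0)^{l-m}$ elements of $\cW_{l+n-1}$, this yields
\[
H\!\left(\textstyle\bigvee_{i=0}^{n-1}f_X^{-i}(\cW_l)\,\Big|\,\bigvee_{j=0}^{n-1}f_X^{-j}(\cW_m)\right)\;\le\;\log\card(E_m(U_0^m,\dots,U_{n-2}^m;U_{n-1}^m))\;+\;(l-m)\log(pT_0).
\]

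The principal task, and the main obstacle, is to bound $\card(E_m(U_0^m,\dots,U_{n-2}^m;U_{n-1}^m))$ finely enough that its $\tfrac{1}{n}\log$ has a vanishing contribution after sending $n\to\infty$, then $l\to\infty$, then $m\to\infty$. The naive bound $\card(E_m)\le d^{n-1}$ from the FBC degree identity only yields $h^*(f_X)\le\log d$, which is far too weak. The refinement is to parameterize each $V\in E_m$ by its forward chain $(V,f(V),\dots,f^{n-1}(V))$, noting that $f^i(V)\in\Bouquet_m^{n-1-i}(U_i^m)\cap\cU_{m+n-1-i}$ for each $i$, and then use the single-step bouquet bound $\card(\Bouquet_n^1(U)\cap\cU_{n+1})\le pT_0$ established in the proof of Lemma~\ref{l:covered by smaller}, together with [\textbf{Deg}], to decompose the count iteratively by grouping chains according to their level-$m$ shadows. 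The delicate combinatorial point is coupling the preimage constraint $f^i(V)=f(f^{i-1}(V))$ with the bouquet constraint $f^i(V)\cap U_i^m\cap X\neq\emptyset$ without overcounting; this is where the full force of the CXC axioms and the refining hypothesis must come in. Once the required bound is in place, the $(l-m)\log(pT_0)$ term vanishes upon dividing by $n$ and sending $n\to\infty$, and the remaining contribution from $\card(E_m)$ dies in the subsequent limits $l\to\infty$ and $m\to\infty$, giving $h^*(f_X)=0$.
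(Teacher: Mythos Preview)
Your setup is right and matches the paper exactly through the covering of $A$ by $E_m(U_0^m,\dots,U_{n-2}^m;U_{n-1}^m)$, the passage from $\cW_{m+n-1}$ to $\cW_{l+n-1}$ via Lemma~\ref{l:covered by smaller}, and the observation that $\cW_{l+n-1}$ refines $\bigvee_{i=0}^{n-1}f_X^{-i}(\cW_l)$ under the refining hypothesis. The resulting inequality
\[
H\Bigl(\bigvee_{i=0}^{n-1}f_X^{-i}(\cW_l)\,\Big|\,\bigvee_{j=0}^{n-1}f_X^{-j}(\cW_m)\Bigr)\;\le\;\log\card(E_m)\;+\;(l-m)\log(pT_0)
\]
is exactly what the paper obtains.

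The gap is the bound on $\card(E_m)$, which you openly leave as ``the principal task.'' Your sketch---tracking the forward chain $(V,f(V),\dots,f^{n-1}(V))$ and applying the single-step bouquet bound $\card(\Bouquet_n^1(U)\cap\cU_{n+1})\le pT_0$ step by step---cannot work: iterating a uniform per-step bound yields something like $(pT_0)^{n}$, and dividing by $n$ leaves a positive constant that no subsequent limit in $l$ or $m$ kills. You correctly note that the naive degree bound $d^{n-1}$ fails for the same reason.

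The missing idea is that [\textbf{Deg}] must be applied not one step at a time but in blocks of length $M_m\to\infty$. Concretely, set $\cL_k\coloneqq\{f^k(V):V\in E_m\}\subseteq\cU_{m+n-1-k}$; you already observed $\bigcup\cL_k\subseteq\bouquet_m^{n-1-k}(U_k^m)$. By Lemma~\ref{l:diam bound bouquet} this bouquet has diameter at most $2(C'+1)^2 d_0\theta^m$, a quantity depending only on $m$. Choosing $M_m$ (roughly $m\log_\lambda\theta$) so that this diameter is below the Lebesgue number of $\cU_{M_m}$, the entire set $\bigcup\cL_k$ sits inside a single $\widehat U_k\in\cU_{M_m}$. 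Now [\textbf{Deg}] says $f^{M_m}|_{\widehat U_k}$ is at most $p$-to-one on good open sets, so $\card(\cL_k)\le p\cdot\card(\cL_{k+M_m})$. Iterating $\lceil n/M_m\rceil$ times from $\cL_n=\{U_{n-1}^m\}$ back to $\cL_0=E_m$ gives
\[
\card(E_m)\;\le\;p^{\,n/M_m+1}.
\]
After dividing by $n$ and sending $n\to\infty$, the contribution is $(\log p)/M_m$, which tends to $0$ as $m\to\infty$ because $M_m\to\infty$. This block-jump mechanism, not a step-by-step count, is what makes the argument close.
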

\begin{proof}
 By Lemma~\ref{l:Wn_refining}, $\{\cW_m\}_{m\in\N_0}$ (defined in (\ref{e:Wn})) is a refining sequence of open covers of $X$.
 
 Fix arbitrary $m,\,l,\,n\in\N$, and $A\in\bigvee_{i=0}^nf_X^{-i}(\cW_m)$ with $m<l<n$. We construct an upper bound for the set
\begin{equation*}
        \min\biggl\{
        \card \xi_A : \xi_A\subseteq\bigvee_{j=0}^nf_X^{-j}(\cW_l),\ A\subseteq\bigcup\xi_A
        \biggr\},
\end{equation*}

\smallskip
\textit{Step 1}. Cover elements in $\bigvee_{i=0}^nf_X^{-i}(\cW_m)$ with elements in $\cW_{m+n}$.

Choose $W_i^m\in\cW_m$ for each $i\in \{0,1,\dots n\}$ such that
\begin{equation*}
    A=\bigcap_{i=0}^n  f_X^{-i}(W_i^m)
    =\left\{ x\in W_0^m : f_X^i(x)\in W^m_i,\ i\in\{1,2,\dots,n\} \right\}.
\end{equation*}
Then for each $i\in\{0,1,\dots,n\}$, $W^m_i=U^m_i\cap X$ for some $U^m_i\in\cU_m$.

By Lemma~\ref{l:cover by E_m}, we get
 \begin{equation}
        A
        \subseteq
        \bigcup_{U^{m+n}\in E_m(U^m_0,\dots,U^m_{n-1};U^m_n)} U^{m+n}\cap X.
    \end{equation}
Then we give an upper bound of the cardinality of the set $E_m(U^m_0,\dots,U^m_{n-1}; U^m_n)$. For each $k \in \{0, \, 1,\, \dots,\, n\}$ let
\begin{equation}\label{e: set: L_k}
    \cL_k\=\bigl\{ f^k(U) : U\in E_m(U^m_0,\dots,U^m_{n-1};U^m_n) \bigr\}.
\end{equation}
By the definition of the set $E_m(U^m_0,\dots,U^m_{n-1};U^m_n)$ in (\ref{e:Em}), we know that $\cL_k\subseteq\cU_{m+n-k}$, and in particular, $\cL_0=E_m(U^m_0,\dots,U^m_{n-1};U^m_n)$ and $\cL_n=\{U^m_n\}$.
By Definition~\ref{d:bouquet} and the definition of the set $E_m(U^m_0,\dots,U^m_{n-1};U^m_n)$ in (\ref{e:Em}), we have
\begin{equation*}
    \bigcup \cL_k\subseteq\bouquet_m^{n-k}(U^m_{k})
\end{equation*}
for each $k\in\{0,1,\dots,n-1\}$.

Put
\begin{equation}\label{e: constant: M_m}
    M_m\=
    \left\lfloor
    m\log_{\lambda} \theta 
    +\log_{\lambda}\left( \frac{4K(C'+1)^2d_0}{C''} \right)
    -1
    \right\rfloor,
\end{equation}
where the constants $K>1$ and $d_0>0$ are from Proposition~\ref{p:roundness}, $\lambda,\, \theta\in(0,1)$, and $C'>0$ are from Proposition~\ref{p:comparability},  and $C''>0$ is from Proposition~\ref{l:exp diam lowerbound}.

We can check that $\{M_m\}_{m\in\N_0}$ is increasing to $+\infty$
and that
\begin{equation}
    2(C'+1)^2d_0\theta^m < {C''\lambda^{M_m}}/{(2K)}.
\end{equation}
By Lemma~\ref{l:diam bound bouquet}, Proposition~\ref{p:roundness}~(iv), and $c_{M_n} = C'' \lambda^{M_n}$ (see Lemma~\ref{l:exp diam lowerbound}), for each $k\in\{0,1,\dots n\}$ we have
\begin{equation*}
    \diam \bouquet_m^{n-k}(U^m_k)< C'' \lambda^{M_n} / (2K)\leq \delta_{M_m},
\end{equation*}
where $\delta_{M_m}$ is the Lebesgue number of $\cU_{M_m}$, which is from Proposition~\ref{p:roundness}~(iv). Thus, there exists $\hU_k\in\cU_{M_m}$ such that
\begin{equation*}
    \bigcup \cL_k\subseteq \bouquet_m^{n-k}(U^m_{k})\subseteq\hU_k.
\end{equation*}

For each $0\leq k\leq n$ and each $0\leq t\leq \min\{n-k,M_m\}$, consider $f^t|_{\hU_k}\:\hU_k\rightarrow f^t(\hU_k)$. By [{\bf Deg}], the map is at most $p$ to 1, thus $\card(\cL_k)\leq p\cdot\card(\cL_{k+t})$. Indeed, if we assume, on the contrary, that $\card(\cL_k)>p\cdot\card(\cL_{k+t})$, then there exists at least one  collection $\bigl\{\tU_1,\,\dots,\tU_{p+1}\bigr\}\subseteq\cL_k$ such that $f^t\bigl(\tU_1\bigr)=\cdots=f^t\bigl(\tU_{p+1}\bigr)=\tU\in\cL_{k+t}$. Since each $\tU_i$, $1\leq i\leq p+1$, is a preimage of $\tU$ (under $f^{-t}$), $\tU_1,\,\dots,\tU_{p+1}$ are pairwise disjoint. Hence, given a point $y\in\tU$, by choosing a preimage $x_i\in\tU_i$ for each $0\leq i\leq p+1$, we obtain a collection of $p+1$ preimages of $y$ in $\hU_k$, which yields a contradiction.

Suppose that $n=s\cdot M_m+t$ for some $s,\,t\in\N_0$ with $0\leq t<M_m$. Then, we have 
$$
\card(\cL_0)\leq p\cdot\card(\cL_{M_m})\leq\cdots\leq p^s\cdot\card(\cL_{s\cdot M_m})\leq p^{s+1}\cdot\card(\cL_n),
$$
which implies
\begin{equation}
    \card (E_m(U^m_0,\dots,U^m_{n-1};U^m_n))\leq p^{\frac{n}{M_m}+1}.
\end{equation}
Define $\cI_A\=\{U\cap X:U\in E_m(U^m_0,\dots,U^m_{n-1};U^m_n)\}\subseteq\cW_{m+n}$. Then $\card(\cI_A)\leq p^{\frac{n}{M_m}+1}$, and $A\subseteq\bigcup\cI_A$.

\smallskip

\textit{Step 2}. Cover elements in $\cI_A$ with elements in $\cW_{l+n}$.

By Lemma~\ref{l:covered by smaller}, for each $W^{m+n}\in\cI_A$ there exists $\cI_{W^{m+n}}\subseteq\cW_{l+n}$ with $\card(\cI_{W^{m+n}})\leq (pT_0)^{l-m}$ such that $W^{m+n}\subseteq \bigcup\cI_{W^{m+n}}$,
 where $p$ is the constant in [{\bf Deg}] and $T_0$ is from Lemma~\ref{l:covered by smaller}.

\smallskip

\textit{Step 3}. Each element in $\cW_{l+n}$ can be covered by some element in $\bigvee_{j=0}^n f_X^{-j}(\cW_l)$.

For each $W^{l+n}\in\cW_{l+n}$, suppose that $W^{l+n}=U^{l+n}\cap X$, $U^{l+n}\in\cU_{l+n}$. By the hypothesis in the statement of Theorem~\ref{t:asymptotic h-expans} that $\{\cU_n\}_{n\in\N_0}$ is refining, for each $j\in\{0,1,\dots,n\}$, there exists $U^{l+j}\in\cU_{l+j}$ such that $U^{l+n}\subseteq U^{l+j}$. Since $U^{l+j}\subseteq f^{-j} \bigl(U^l_j \bigr)$ for some $U^l_j\in\cU_l$, we have $U^{l+n}\subseteq U^{l+j}\subseteq f^{-j}\bigl(U^l_j\bigr)$. As a consequence
\begin{equation}
\begin{aligned}
    W^{l+n}=X\cap U^{l+n}
    \subseteq& X\cap\bigcap_{j=0}^n f^{-j}\bigl(U^l_j\bigr)\\
    =&\bigcap_{j=0}^n \bigl(X\cap f^{-j}\bigl(U^l_j\bigr)\bigr)
    =\bigcap_{j=0}^n f^{-j}\bigl(X\cap U^l_j\bigr)\in\bigvee_{j=0}^nf^{-j}(\cW_l).
\end{aligned}
\end{equation}

\smallskip

By Steps 1 through 3, there exists $\cJ_A\subseteq\bigvee_{j=0}^nf_X^{-j}(\cW_l)$ with $\card(\cJ_A)\leq p^{\frac{n}{M_m}+1}(pT_0)^{l-m}$, such that $A\subseteq\bigcup\cJ_A$. It follows that
\begin{equation*}
        \min\biggl\{
        \card \xi_A : \xi_A\subseteq\bigvee_{j=0}^nf_X^{-j}(\cW_l),\ A\subseteq\bigcup\xi_A
        \biggr\}
        \leq
        p^{\frac{n}{M_m}+1}(pT_0)^{l-m}.
\end{equation*}
Since $A$ is an arbitrary element of $\bigvee_{i=0}^nf_X^{-i}(\cW_m)$, by (\ref{e:H(A|B)}) in Definition~\ref{d:conditional_tail_entropy} we have
\begin{equation*}
\begin{aligned}
    &H\biggl(
    \bigvee_{j=0}^{n} f_X^{-j}(\cW_l)
    \,\bigg|\,
    \bigvee_{i=0}^{n} f_X^{-i}(\cW_m)
    \biggr)\\
    &\qquad=
    \log\biggl(
    \max\limits_{A\in\bigvee_{i=0}^nf_X^{-i}(\cW_m)}\biggl\{
    \min\biggl\{
    \card \xi_A : \xi_A\subseteq\bigvee_{j=0}^nf_X^{-j}(\cW_l),\ A\subseteq\bigcup\xi_A
    \biggr\}
    \biggr\}
    \biggr)\\
    &\qquad\leq
    \log\left( p^{\frac{n}{M_m}+1}(pT_0)^{l-m}\right).
\end{aligned}
\end{equation*}
Thus by (\ref{e:top tail entropy}) in Definition~\ref{d:conditional_tail_entropy}, we have
\begin{equation}   \label{e:Pf_t_asymptotic h-expans_tail_entropy_0}
    \begin{aligned}
        h^*(f_X)
        &=
        \lim_{m\to +\infty}
        \lim_{l\to +\infty} 
        \lim_{n\to +\infty}
        \frac{1}{n}H\biggl(
        \bigvee_{j=0}^{n-1} f_X^{-j}(\cW_l)
        \,\bigg|\,
        \bigvee_{i=0}^{n-1} f_X^{-i}(\cW_m)
        \biggr)\\
        &\leq
        \lim_{m\to +\infty}
        \lim_{l\to +\infty} 
        \lim_{n\to +\infty}
        \frac{1}{n}\log\left(
        p^{\frac{n-1}{M_m}+1}(pT_0)^{l-m}
        \right)
        =
        \lim_{m\to +\infty}\frac{\log p}{M_m}
        =
        0.
    \end{aligned}
\end{equation}
The theorem follows, therefore, from (\ref{e:Pf_t_asymptotic h-expans_tail_entropy_0}) and Definition~\ref{d:asymp_h_exp}.
\end{proof}

\begin{lemma}    \label{l:iteration_satisfy_assumptions}
	Let $f\:(\fX_1,X)\rightarrow(\fX_0,X)$ and $\{\cU_k\}_{k\in\N_0}$ satisfy the Assumptions in Section~\ref{s:assumption}. Assume $n\in\N$. Then $f^n|_{\fX_n}\:(\fX_n,X)\rightarrow(\fX_0,X)$ and $\{\cU_{kn}\}_{k\in\N_0}$ satisfy the Assumptions in Section~\ref{s:assumption}. Moreover, there exists $N\in\N$ such that $\cU_{(k+1)N}$ is a refinement of  $\cU_{kN}$ for each $k\in\N_0$.
\end{lemma}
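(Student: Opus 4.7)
The plan is to verify that $f^n|_{\fX_n}$ together with $\{\cU_{kn}\}_{k\in\N_0}$ satisfies Assumptions~(1)--(4) of Section~\ref{s:assumption}, and then to exhibit an index $N$ implementing the refining property. The verification of (1)--(3) is essentially bookkeeping.

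First, I would verify (1)--(2): $\fX_n = f^{-(n-1)}(\fX_1)$ is open in $\fX_1$ (and thus in $\fX_0$) since $f$ is open by Proposition~\ref{p:FBC_1}; it inherits Hausdorffness, local compactness, and local connectedness from $\fX_0$; and its closure is contained in the compact set $\closure{\fX_1}$. Finiteness of the number of connected components follows by induction on $n$, using that $f|_{\fX_1}$ is finite-to-one. For Assumption~(3), composition of FBCs is an FBC with multiplicative degree (Subsection~\ref{ss:Finitebranchedcovering}), so $f^n|_{\fX_n}$ and $(f|_X)^n = f^n|_X$ are both FBCs of degree $d^n$; by property~(4) in Subsection~\ref{sss:basic assump}, the repellor of $f^n|_{\fX_n}$ equals $\bigcap_{k\in\N_0}\closure{\fX_{kn}} = X$.

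Next, I would check Assumption~(4). The sequence $\{\cU_{kn}\}_{k\in\N_0}$ coincides with the good-cover sequence generated under $f^n$, because a connected component of $(f^n)^{-k}(U) = f^{-kn}(U)$ for $U\in\cU_0$ is exactly an element of $\cU_{kn}$ in the original construction. The five axioms all transfer to the iterate: [{\bf Expans}] follows from [{\bf Expans}] for $f$ via cofinality of $\{kn\}_{k\in\N_0}$ in $\N_0$; [{\bf Irred}] uses total invariance $f(X) = X$ (given a neighborhood $W$ with $f^m(W)\supseteq X$, take $k = \lceil m/n \rceil$ so that $f^{nk}(W) = f^{nk-m}(f^m(W)) \supseteq f^{nk-m}(X) = X$); and [{\bf Deg}], [{\bf Round}], [{\bf Diam}] hold with the same constants $p$, $\rho_\pm$, $\delta_\pm$ because any pair of good covers $(\cU_{kn},\cU_{(k+j)n})$ under $f^{jn}$ is a special case of a pair of good covers under $f^l$ in the original system.

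Finally, for the refining index $N$, I would apply [{\bf Expans}] to the finite open cover $\cU_0$: there exists $N\in\N$ such that every $U\in\cU_m$ with $m\geq N$ is contained in some element of $\cU_0$. To verify that $\cU_{(k+1)N}$ refines $\cU_{kN}$ for every $k\in\N_0$, I fix $U'\in\cU_{(k+1)N}$ and write it as a connected component of $f^{-kN}(\tU)$ for some $\tU\in\cU_N$; choosing $V\in\cU_0$ with $\tU\subseteq V$ gives $U' \subseteq f^{-kN}(V)$, and by connectedness $U'$ lies in a unique connected component $V'$ of $f^{-kN}(V)$, which is by definition an element of $\cU_{kN}$. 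The main obstacle will be the careful verification that each CXC axiom restricts cleanly to the sub-indexing $\{kn\}_{k\in\N_0}$---especially [{\bf Irred}], where total invariance of $X$ is essential; once this bookkeeping is complete, the refining property follows from a single application of [{\bf Expans}] to $\cU_0$ and a straightforward pullback argument.
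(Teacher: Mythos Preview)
Your proposal is correct and follows essentially the same approach as the paper: verify Assumptions (1)--(4) by bookkeeping and obtain the refining index $N$ by applying [\textbf{Expans}] to $\cU_0$, then pull back along $f^{kN}$ using connectedness. One minor slip: openness of $\fX_n = f^{-(n-1)}(\fX_1)$ follows from \emph{continuity} of $f$ (preimages of open sets), not from $f$ being an open map---but this does not affect the argument, and you in fact supply more detail than the paper on why the five axioms transfer (notably the use of total invariance of $X$ for [\textbf{Irred}], which the paper dismisses as ``a straightforward consequence of the definitions'').
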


\begin{proof}
	We first verify the last statement. By [{\bf Expans}], there exists $N\in\N$ such that for each $V'\in\cU_N$ there exists $V\in\cU_0$ such that $V'\subseteq V$. Then for each $k\in\N_0$ and each $U'\in\cU_{(k+1)N}$, there exists $U_0\in\cU_0$ such that $f^{kN}(U')\subseteq U_0$. Since $U'$ is connected, there exists $U\in\cU_{kN}$ such that $U$ is a connected component of $f^{-kN}(U_0)$ and $U'\subseteq U$. Thus $\cU_{(k+1)N}$ is a refinement of  $\cU_{kN}$.
	
	Next, we check the Assumptions in Section~\ref{s:assumption} for $f^n|_{\fX_n}\:(\fX_n,X)\rightarrow(\fX_0,X)$ and $\{\cU_{kn}\}_{k\in\N_0}$ one by one.
	
	\smallskip
	
	(1)	$\fX_n$ is Hausdorff since $\fX_0$ is Hausdorff. 
		
	For each $x\in\fX_n$, choose a compact neighborhood $U_x$ of $f^n(x)$ in $\fX_0$. Since $f$ is proper, $f^{-n}(U_x)$ is a compact neighborhood of $x$ in $\fX_n$, so $\fX_n$ is locally compact. 
	
	Since $\fX_n=f^{-n}(\fX_0)$ is open, for each $x\in\fX_n$ and each neighborhood $U$ of $x$ in $\fX_n$, $U$ is an open subset of $\fX_0$. By the local connectivity of $\fX_0$, there exists a connected neighborhood $V$ of $x$ such that $V\subseteq U\subseteq\fX_n$, thus $\fX_n$ is locally connected. 
	
	Let $\fC_n$ be an arbitrary connected component in $\fX_n$ and $\fC_0$ be the connected component in $\fX_0$ which contains $f^n(\fC_n)$. Since $\fX_0$ is locally connected, $\fC_0$ is open. Then $\fC_n$ is a connected component of $f^{-n}(\fC_0)$. By Proposition~\ref{p:f to 1,open and close,surjective}, $f^n|_{\fX_n}\:{\fX_n\rightarrow\fX_0}$ is finite-to-one, open, closed, and surjective. Then by Proposition~\ref{p:component implies onto}, ${f^n|_{\fC_n}\:\fC_n\rightarrow\fC_0}$ is surjective. Since $\fX_0$ has finitely many connected components and $\deg(f^n)< + \infty$, we get that $\fX_n$ has finitely many connected components.
	
	\smallskip
	(2) Since $f$ is continuous, $\fX_n=f^{-(n-1)}(\fX_1)$ is open. Since $\closure{\fX_n}$ is compact in $\fX_{n-1}$, $\closure{\fX_n}$ is compact in $\fX_0$.
	
	\smallskip
	(3) Let $\deg(f)=d>1$. The composition of two FBC maps is an FBC map, and the degree multiplies, so $\deg(f^n)=d^n$.
	
	Since for each $k\in\N$, $f|_{\fX_k}\:\fX_k\rightarrow\fX_{k-1}$ is an FBC map of degree $d$, $f^n|_{\fX_n}$ is an FBC map of degree $d^n$.  The repellor of $f^n|_{\fX_n}\:\fX_n\rightarrow\fX_0$ is $X=\bigcap_{k\in\N}\overline{\fX_k}
	=\bigcap_{k\in\N}\overline{\fX_{kn}}$.
	Since $f|_X\:X\rightarrow X$ is an FBC map of degree $d$, $f^n|_X=(f|_X)^n$ is an FBC map of degree $d^n$.
	
	\smallskip
	(4) By definition $\cU_n$ is a finite cover of $X$ by connected open subsets of $\fX_n$ and for each $k\in\N_0$ the elements of $\cU_{(k+1)n}$ are preimages under $f^n$ of elements in $\cU_{kn}$. Axioms [{\bf Expans}], [\textbf{Irred}], [\textbf{Deg}], [\textbf{Round}], and [{\bf Diam}] hold for the sequence $\{\cU_{kn}\}_{k\in\N}$, which is a straightforward consequence of the definitions.
\end{proof}

Finally, we give a proof of Theorem~\ref{t:Intro asym h-expans}.

\begin{proof}[Proof of Theorem~\ref{t:Intro asym h-expans}]
    Let $f\:(\fX_1,X)\rightarrow(\fX_0,X)$ and $\{\cU_n\}_{n\in\N_0}$ satisfy the Assumptions in Section~\ref{s:assumption}.

    Since $h^*((f|_X)^n)=nh^*(f|_X)$ for each $n\in\N$ (see (\ref{e:tail_entropy_n})), by Definition~\ref{d:asymp_h_exp} it suffice to prove $h^*\bigl((f|_X)^{N}\bigr)=0$ for some $N\in\N$. 

    By Lemma~\ref{l:iteration_satisfy_assumptions}, there exists $N\in \N$ such that $f^N|_{\fX_N}\:(\fX_N,X)\rightarrow(\fX_0,X)$ and $\{\cU_{kN}\}_{k\in\N_0}$ satisfy the Assumptions in Section~\ref{s:assumption}, and moreover, $\{ \cU_{kN} \}_{k\in\N}$ satisfies the property that for each $U'\in\cU_{(k+1)N}$ there exists $U\in\cU_{kN}$ such that $U'\subseteq U$.
    
    Then by Theorem~\ref{t:asymptotic h-expans}, $(f|_X)^N=f^N|_X$ is asymptotically $h$-expansive. By Definition~\ref{d:asymp_h_exp}, $h^* \bigl( (f|_X)^N \bigr)=0$. Therefore $h^*(f|_X)=0$ by (\ref{e:tail_entropy_n}), and the theorem follows from Definition~\ref{d:asymp_h_exp}.
\end{proof}

%end main theorem %end main theorem %end main theorem %end main theorem %end main theorem

    In what follows, we show that a metric CXC system is forward expansive if there is no branch point in the repellor. The proof is based on the observation that the distance between two points $x,\,y\in X$ can be estimated by the diameter of some good open set containing one of $x,\,y$, and such estimation works well under iterations when $d(x,y)$ is small.
    \begin{theorem}\label{t:expansive}
    	Let $f\:(\fX_1,X)\rightarrow(\fX_0,X)$ and $\{\cU_n\}_{n\in\N_0}$ satisfy the Assumptions in Section~\ref{s:assumption}. %Denote $f_X\=f|_X$.
    	Assume that $X\cap B_f=\emptyset$. Then $f|_X$ is forward expansive.
    \end{theorem}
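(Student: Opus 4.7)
The plan is to upgrade the hypothesis $X\cap B_f=\emptyset$ to the statement that $f|_X$ is a local homeomorphism with uniform quantitative data, and then to exploit the diameter shrinkage of Proposition~\ref{p:comparability} to rule out distinct orbits that stay $\delta$-close forever.

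First, since $X$ is totally invariant, for any $z\in X$ and any $k\in\N$ the point $f^k(z)$ lies in $X$ and hence outside $B_f$; iterating this observation yields $X\cap B_{f^n}=\emptyset$ for every $n\in\N$, and combined with Proposition~\ref{p:same loc deg} gives $\deg(f_X^n;z)=1$ for every $z\in X$. Proposition~\ref{p:FBC_2}~(ii) then provides, at each $z\in X$, a neighborhood on which $f$ is injective. A lower-semicontinuity argument for the maximal radius of such an injectivity ball, together with the compactness of $X$, produces a uniform constant $r_0>0$ such that $B(z,r_0)\subseteq\fX_1$ and $f|_{B(z,r_0)}$ is injective for every $z\in X$. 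Since $f_X$ is now a covering map of degree $d$, I also extract a uniform separation constant $\eta>0$ with the property that $d(z_1,z_2)\ge\eta$ whenever $z_1\ne z_2$ lie in $X$ with $f(z_1)=f(z_2)$; otherwise sequences $y_j\in X$ together with distinct preimages $z_{1,j},z_{2,j}\in f_X^{-1}(y_j)$ collapsing in distance would produce, after extracting subsequential limits, a point $z^*\in X$ around which two local inverse branches of $f$ would coincide, contradicting the injectivity of $f$ on $B(z^*,r_0)$.

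For the main argument, choose $M\in\N$ large enough that $d_M<\min\{r_0,\eta\}/2$, which is possible since $d_M\to 0$ by Proposition~\ref{p:comparability}. Let $\delta_M$ be the Lebesgue number of $\cU_M$ and set $\delta\=\min\{\delta_M/2,\,\eta/2\}>0$. Assume $x,y\in X$ satisfy $d(f^n(x),f^n(y))<\delta$ for every $n\in\N_0$. For each such $n$ pick $U_n\in\cU_M$ containing $\{f^n(x),f^n(y)\}$, and let $V_n\in\cU_{M+n}$ be the connected component of $f^{-n}(U_n)$ containing $x$. Since each iterate $f^j(V_n)\in\cU_{M+n-j}$ meets $X$ and has diameter at most $d_M<r_0$, it is contained in some $B(z,r_0)$ on which $f$ is injective; iterating this one-step injectivity shows that $f^n|_{V_n}\:V_n\to U_n$ is a bijection. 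It now suffices to prove $y\in V_n$ for every $n$, since then $d(x,y)\le\diam V_n\le d_{M+n}\to 0$ forces $x=y$.

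Assume for contradiction that $y\notin V_n$ for some $n$, and let $\tilde y\in V_n$ denote the unique preimage of $f^n(y)$ under $f^n|_{V_n}$; total invariance of $X$ forces $\tilde y\in X$, and by construction $\tilde y\ne y$. Let $k\in\{0,\dots,n-1\}$ be maximal with $f^k(\tilde y)\ne f^k(y)$; then $f^k(\tilde y)$ and $f^k(y)$ are two distinct elements of $X$ mapped by $f$ to the common point $f^{k+1}(y)$, so the separation bound yields $d(f^k(\tilde y),f^k(y))\ge\eta$. On the other hand, both $f^k(x)$ and $f^k(\tilde y)$ lie in $f^k(V_n)\in\cU_{M+n-k}$, whose diameter is at most $d_M<\eta/2$, while $d(f^k(x),f^k(y))<\delta\le\eta/2$, so the triangle inequality produces $d(f^k(\tilde y),f^k(y))<\eta$, the desired contradiction. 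The main technical obstacle is the clean extraction of the two uniform constants $r_0$ and $\eta$ from the local FBC data and the covering property, without any appeal to a Riemannian or Lipschitz structure; the remainder is bookkeeping around the CXC diameter estimates.
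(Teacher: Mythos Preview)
Your proof is correct, and it takes a genuinely different route from the paper's.

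The paper argues \emph{forward}: given $x\neq y$ close, it finds the maximal level $n$ such that some $U\in\cU_n$ contains both; it then picks $U'\in\cU_{n+1}$ containing $x$ with $\Round(U',x)<K$, so necessarily $y\notin U'$. Using the bouquet lemma to guarantee that $f^{n-n_0}$ is injective on $U\cup U'$, it pushes forward and applies the axiom [\textbf{Round}] to bound $\Round(f^{n-n_0}(U'),f^{n-n_0}(x))<\rho_+(K)$; together with the diameter lower bound $c_{n_0+1}=C''\lambda^{n_0+1}$ from Lemma~\ref{l:exp diam lowerbound}, this yields directly $d(f^{n-n_0}(x),f^{n-n_0}(y))\geq\delta$.

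Your argument is in the classical style for expanding local homeomorphisms: you extract a uniform injectivity radius $r_0$ and a uniform fibre separation $\eta$ by compactness, then pull back along the orbit to trap $y$ in the component $V_n\in\cU_{M+n}$ containing $x$, whose diameter tends to zero. The key contrast is that you use only the upper diameter bound $d_n\to 0$ and the FBC/total-invariance structure, whereas the paper's proof genuinely exploits the conformality data, invoking both [\textbf{Round}] via $\rho_+$ and the lower diameter bound $c_n$. Your route is therefore more elementary and shows that forward expansiveness already follows from [\textbf{Expans}] and the absence of branching, without the roundness distortion axiom; the paper's route, on the other hand, explicitly identifies the iterate at which separation occurs and illustrates how the CXC geometric axioms interact.

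Two small points worth tightening in a final write-up: in Step~1, the existence of a uniform $r_0$ with $B(z,r_0)\subseteq\fX_1$ uses $\dist(X,\fX_0\setminus\fX_1)>0$, which follows from $X\subseteq\fX_2$ and $\overline{\fX_2}\subseteq\fX_1$ compact; and in Step~2, the contradiction is immediate once $z_{1,j},z_{2,j}$ both lie in $B(z^*,r_0)$ for large $j$ --- the phrase about ``two local inverse branches coinciding'' is unnecessary.
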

    \begin{proof}
    	For each $x\in X$, since $\deg(f;x)=1$, there exists an open neighborhood $U_x$ of $x$, such that $f|_{U_x}$ is injective. Let $\cV\=\{U_x:x\in X\}$. By Lemma~\ref{l:diam bound bouquet}, we may fix $n_0\in\N$, such that for all $n,\,k\in\N_0$ with $n\geq n_0$ and each $U^n\in\cU_n$, there exists $V\in\cV$ for which $\bouquet_n^k(U^n)\subseteq V$. Thus, $f^{n-n_0}|_{\bouquet_n^k(U^n)}$ is injective for all $n,\,k\in\N_0$ with $n\geq n_0$ and each $U^n\in\cU_n$.
    	
    	Consider 
    	$$\delta\=\min\biggl\{\frac{C''\lambda^{n_0}}{2K},\, \frac{C''\lambda^{n_0}}{2\rho_+(K)}\biggr\},$$
    	where $C''>0$, $\lambda\in(0,1)$ are the constants in Lemma~\ref{l:exp diam lowerbound}, $K>1$ is the constant in Proposition~\ref{p:roundness}, and $\rho_+$ is the increasing embedding in [{\bf Round}].
    	
    	Fix arbitrary $x,\,y\in X$, $x\neq y$. Suppose that 
    	$d(x,y)<\delta\leq\frac{C''\lambda^{n_0}}{2K}$. 
    	Then by Proposition~\ref{p:roundness}~(iv), there exists $U^{n_0}\in\cU_{n_0}$ containing $x,\,y$. Put
    	$$
    	n\=\max\{l\in\N:\mbox{there exists }U\in\cU_l\mbox{ such that }x,\,y\in U\},
    	$$
    	then we have $n\geq n_0$.
    	
    	Suppose that $U\in\cU_n$ and $x,\,y\in U$. By Proposition~\ref{p:roundness} (ii), there exists $U'\in\cU_{n+1}$ such that $x\in U$, and that $\Round(U',x)<K$. Here $K$ is the constant in Proposition~\ref{p:roundness}. By the choice of $n$, we have that $y\notin U'$.
    	
    	Denote $k\=n-n_0$. Since $U\cup U'\subseteq \bouquet_n^1(U)$, the map $f^{k}|_{U\cap U'}$ is injective. Thus, $f^{k}(y)\notin f^{k}(U')$.
    	
    	By [{\bf Round}], we have
    	$$
    	\Round\bigl(f^{k}(U'),f^{k}(x)\bigr)
    	<\rho_+(\Round(U',x))<\rho_+(K).
    	$$
    	Denote $L\=\Round\bigl(f^l(U'),f^k(x)\bigr)$. Then, there exists $r>0$, such that 
    	$$
    	B\bigl(f^k(x),r\bigr)\subseteq f^k(U')\subseteq \closure{B\bigl(f^k(x),Lr\bigr)}.
    	$$
    	It is clear that $$r\geq\frac{\diam(f^k(U'))}{2L}\geq\frac{\diam(f^k(U'))}{2\rho_+(K)}.$$
    	Since $f^k(y)\notin f^k(U')$, we have that 
    	$$
    	d\bigl(f^k(x),f^k(y)\bigr)\geq r\geq\frac{\diam(f^k(U'))}{2\rho_+(K)}\geq \frac{C''\lambda^{n_0}}{2\rho_+(K)}\geq\delta.
    	$$
    	
    	Since $x,\,y\in X$ are arbitrary, $f|_X$ is forward expansive.
    \end{proof}

\begin{remark}
	Note that $f|_X$ cannot be forward expansive if there exists a branch point in $X$.
\end{remark}

%end expansive

%begin not h-expansive %begin not h-expansive %begin not h-expansive 

Next, we prove that a metric CXC system is $h$-expansive if and only if there is no branch point in the repellor.
Here is the main idea of the proof. Given a sufficiently large $m\in\N$, we construct recursively a sequence $\{A_n\}_{n\in\N}$ of subsets of $X$ such that each $A_n$ is contained in an element in $\bigvee_{i=0}^{n}f^{-i}(\cW_m)$. We prove by induction that for each sufficiently large $l\in\N$, the number of elements in $\bigvee_{j=0}^{n}f^{-j}(\cW_l)$ needed to cover $A_n$ is bounded from below, and such lower bound leads to the conclusion that $h(f|\cW_m)>0$.

We recall the following lemma about $h$-expansiveness; see \cite[Lemma~5.12]{Li15}.
\begin{lemma}[Li \cite{Li15}]\label{l:h-expansive can be iterated}
Let $g\:Z\rightarrow Z$ be a continuous map on a compact metric space $(Z,d)$. If $g$ is $h$-expansive then so is $g^n$ for each $n\in\N$.
\end{lemma}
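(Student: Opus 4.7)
The plan is to prove this lemma directly from Definition~\ref{d:conditional_tail_entropy}. Suppose $g$ is $h$-expansive, witnessed by a finite open cover $\eta$ of $Z$ with $h(g|\eta) = 0$. I claim that the finite open cover
\[
\eta'\=\bigvee_{i=0}^{n-1} g^{-i}(\eta)
\]
witnesses the $h$-expansiveness of $g^n$. Since a refining sequence of open covers of $Z$ is, by Definition~\ref{d:refining_sequence}, purely a topological notion of the sequence and the space (independent of the map), the same sequence $\{\xi_l\}_{l\in\N_0}$ can be used to evaluate both $h(g|\eta)$ and $h(g^n|\eta')$.

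The main ingredient is the telescoping identity, valid for each $k \in \N$:
\[
\bigvee_{j=0}^{k-1}(g^n)^{-j}(\eta') = \bigvee_{j=0}^{k-1}\bigvee_{i=0}^{n-1} g^{-(nj+i)}(\eta) = \bigvee_{m=0}^{nk-1} g^{-m}(\eta).
\]
Combined with the elementary monotonicity property that, for open covers $\alpha$, $\beta$, $\gamma$ of $Z$ with $\beta$ refining $\alpha$, we have $H(\alpha|\gamma) \leq H(\beta|\gamma)$ --- obtained from (\ref{e:H(A|B)}) by replacing any covering subfamily $\beta_A \subseteq \beta$ of $A \in \gamma$ by a no-larger subfamily of $\alpha$ consisting of one enveloping element per member of $\beta_A$ --- and observing that $\bigvee_{i=0}^{nk-1} g^{-i}(\xi_l)$ refines $\bigvee_{i=0}^{k-1} g^{-ni}(\xi_l) = \bigvee_{i=0}^{k-1}(g^n)^{-i}(\xi_l)$, I would derive
\[
H\biggl( \bigvee_{i=0}^{k-1}(g^n)^{-i}(\xi_l) \,\bigg|\, \bigvee_{j=0}^{k-1}(g^n)^{-j}(\eta') \biggr)
\leq H\biggl( \bigvee_{i=0}^{nk-1} g^{-i}(\xi_l) \,\bigg|\, \bigvee_{m=0}^{nk-1} g^{-m}(\eta) \biggr).
\]

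Dividing both sides by $k$, letting $k\to+\infty$, and using that the limit defining $h(g, \xi_l|\eta)$ in (\ref{e: h(g,A|B)}) exists (by a standard subadditivity argument) and hence equals the subsequential limit along $N = nk$, I obtain $h(g^n, \xi_l|\eta') \leq n\cdot h(g, \xi_l|\eta)$. Taking $l\to+\infty$ then yields $h(g^n|\eta') \leq n\cdot h(g|\eta) = 0$, so $g^n$ is $h$-expansive with witnessing cover $\eta'$. The only subtlety is pinning down the correct direction of the monotonicity of $H(\cdot|\gamma)$ under refinement; once this is set up carefully, the remaining manipulations are routine and I do not expect any serious obstacle.
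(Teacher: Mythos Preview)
Your argument is correct. The paper itself does not prove this lemma; it merely records the statement and cites \cite[Lemma~5.12]{Li15}, so there is no in-paper proof to compare against. Your direct approach via Definition~\ref{d:conditional_tail_entropy} is exactly the natural one: the telescoping identity
\[
\bigvee_{j=0}^{k-1}(g^n)^{-j}(\eta') = \bigvee_{m=0}^{nk-1} g^{-m}(\eta)
\]
is correct, the monotonicity $H(\alpha\mid\gamma)\le H(\beta\mid\gamma)$ when $\beta$ refines $\alpha$ is the right direction and follows from (\ref{e:H(A|B)}) as you indicate, and the passage to the limit along the subsequence $N=nk$ is justified by the existence of the limit in (\ref{e: h(g,A|B)}). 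The resulting inequality $h(g^n\mid\eta')\le n\cdot h(g\mid\eta)=0$ finishes the proof. There is no gap.
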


\begin{theorem}\label{t:not h-expansive}
    Let $f\:(\fX_1,X)\rightarrow(\fX_0,X)$ and $\{\cU_n\}_{n\in\N_0}$ satisfy the Assumptions in Section~\ref{s:assumption}. %Denote $f_X\=f|_X$.
    Suppose that $B_f\cap X\neq\emptyset$. Then $f|_X$ is not $h$-expansive.
\end{theorem}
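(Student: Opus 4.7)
The plan is to establish the lower bound $h(f_X\,|\,\cW_m)>0$ for every sufficiently large $m\in\N$, which suffices to conclude that $f_X$ is not $h$-expansive. Indeed, by the Expansion Axiom, every finite open cover $\eta$ of $X$ is refined by $\cW_m$ once $m$ is large enough, and a direct inspection of~\eqref{e:H(A|B)} shows that the topological conditional entropy $h(f_X\,|\,\cdot)$ is monotone non-increasing with respect to refinement of the conditional cover, so $h(f_X\,|\,\eta)\geq h(f_X\,|\,\cW_m)>0$. By Lemmas~\ref{l:h-expansive can be iterated} and~\ref{l:iteration_satisfy_assumptions}, replacing $f$ by a suitable iterate $f^N$ if necessary, I may assume that $\cU_{n+1}$ refines $\cU_n$ for every $n\in\N_0$; then $\{\cW_n\}_{n\in\N_0}$ is a refining sequence of open covers of $X$ by Lemma~\ref{l:Wn_refining}, and can play the role of $\{\xi_l\}$ in the definition~\eqref{e:top cond entropy} of $h(f_X\,|\,\cW_m)$.

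Fix a branch point $b\in X$ with $k\=\deg(f;b)=\deg(f_X;b)\geq 2$ (Proposition~\ref{p:same loc deg}). For a given $m\in\N$, let $U^m\in\cU_m$ be a good open set containing $f(b)$, and let $\tU^{m+1}\in\cU_{m+1}$ be the preimage of $U^m$ containing $b$; by Proposition~\ref{p:FBC_2}(i) the restriction $f\:\tU^{m+1}\cap X\to U^m\cap X$ is an FBC map of degree $k$. Using the distortion axioms [{\bf Round}] and [{\bf Diam}] together with Proposition~\ref{p:comparability}, I would choose an integer $l_0\geq m$ and a point $y^*\in U^m\cap X$ so that the $k$ distinct $f_X$-preimages $x_1^*,\dots,x_k^*$ of $y^*$ in $\tU^{m+1}\cap X$ lie in $k$ pairwise distinct elements of $\cW_{l_0}$. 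This is possible because $\sup\{\diam U:U\in\cU_l\}\to 0$ as $l\to\infty$ (Proposition~\ref{p:comparability}), while, by keeping $y^*$ at a positive distance from the branch value $f(b)$, the distortion axioms force a positive lower bound on the pairwise separation of its preimages near $b$.

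The core of the argument is a recursive construction of subsets $A_n\subseteq X$ satisfying \textnormal{(i)} $A_n$ lies in a single element of $\bigvee_{i=0}^{n} f_X^{-i}\cW_m$, and \textnormal{(ii)} covering $A_n$ by elements of $\bigvee_{j=0}^{n} f_X^{-j}\cW_l$ requires at least $k^n$ of them for every $l\geq l_0$. The base case $n=1$ is furnished by $A_1=\{x_1^*,\dots,x_k^*\}$, which lies inside $\tU^{m+1}$ and hence, by the refinement reduction, inside a single $U'^m\in\cU_m$; its members were chosen so that they lie in $k$ distinct $\cW_{l_0}$-cells. For the inductive step, each $x\in A_n$ is lifted through $f_X$ by performing a $k$-fold split inside a bouquet around a suitable (possibly iterated) preimage of $b$ close to $x$: the bouquet diameter bound (Lemma~\ref{l:diam bound bouquet}) combined with the refinement reduction places these $k$ new preimages inside a common $\cW_m$-cell, ensuring~(i) at level $n+1$, while the distortion axioms preserve their pairwise $\cW_l$-separation at time $0$, and the inductive $\cW_l$-distinguishability of $A_n$ at times $1,\dots,n+1$ is inherited. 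Passing to the entropy yields
\[
H\!\biggl(\bigvee_{i=0}^{n} f_X^{-i}\cW_l\,\bigg|\,\bigvee_{j=0}^{n} f_X^{-j}\cW_m\biggr)\geq n\log k
\]
for all $l\geq l_0$, whence $h(f_X,\cW_l\,|\,\cW_m)\geq\log k$ and $h(f_X\,|\,\cW_m)\geq\log k>0$ via~\eqref{e: h(g,A|B)} and~\eqref{e:top cond entropy}.

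The principal obstacle is the inductive step, where one must simultaneously keep the $k$ new preimages of each $x\in A_n$ inside a common element of the ever-finer join $\bigvee_{i=0}^{n+1} f_X^{-i}\cW_m$ (the coarse-scale confinement) while maintaining their $\cW_l$-separation (the fine-scale distinguishability). Since $b$ need not be periodic and its forward orbit need not return near $b$, the selection of an appropriate auxiliary branch point at each stage requires locating iterated preimages of $b$ via~[{\bf Irred}] and exploiting that any such preimage inherits the local branching of $f$ at $b$ under a further iterate. Extracting a uniform choice of $l_0$ independent of $n$ is the most delicate point, and requires a careful tracking of the distortion constants $\rho_\pm$, $\delta_\pm$, $C'$, $\theta$, $\lambda$, and $K$ from~[{\bf Round}],~[{\bf Diam}], and Propositions~\ref{p:roundness} and~\ref{p:comparability}.
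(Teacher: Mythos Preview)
Your overall strategy---showing $h(f_X\,|\,\cW_m)>0$ by constructing sets $A_n$ that are confined to a single cell of $\bigvee_{i=0}^{n}f_X^{-i}\cW_m$ yet require many cells of $\bigvee_{j=0}^{n}f_X^{-j}\cW_l$ to cover---matches the paper's approach, as does your reduction to a refining sequence of covers via Lemmas~\ref{l:h-expansive can be iterated} and~\ref{l:iteration_satisfy_assumptions}.

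The genuine gap is in the inductive step and the claimed growth rate $\card(A_n)=k^n$. A $k$-fold split occurs only when you pull back through $f$ from a neighborhood of the branch \emph{value} $f(b)$ into a neighborhood of $b$. After one such split, your set $A_1$ sits near $b$, not near $f(b)$; a second immediate $k$-fold split is therefore impossible unless $b$ happens to be a fixed point. Your own ``principal obstacle'' paragraph essentially concedes this, invoking [\textbf{Irred}] to locate iterated preimages of $b$, but you do not account for the cost: passing from $A_n$ (near $b$) to a position where another split can be performed (near $f(b)$) consumes additional iterates during which no branching occurs. The paper handles this precisely by applying Proposition~\ref{p:repellors are fractal} to produce an integer $t\in\N$ and a level-$(m+t)$ set $V^{m+t}\subseteq U_0^{m'}$ with $f^t(V^{m+t})=U_0^m$; one then alternates $t$ single-valued lifts with one $k$-fold split, yielding $\card(A_n)=k^{\lfloor n/(t+1)\rfloor}$ and only the weaker (but sufficient) bound $h(f_X\,|\,\cW_m)\geq\frac{\log k}{t+1}$. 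Your asserted bound $h(f_X\,|\,\cW_m)\geq\log k$ is in general too strong and cannot be achieved by the mechanism you describe.

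A second omission is the reduction, via [\textbf{Deg}], to a branch point whose entire backward orbit is free of further branch points (see assumption~(ii) at the start of the paper's proof). Without this, the single-valued lifting steps between successive splits are not guaranteed to stay inside a single $\cU_m$-cell, and the confinement condition~(i) can fail.
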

\begin{proof}
	Consider $x_0\in B_f\cap X$.

    First, without loss of generality, we make some assumptions.
\begin{itemize}
	\smallskip
    \item[\textnormal{(i)}] By Lemma~\ref{l:h-expansive can be iterated}, it suffice to prove that $(f|_X)^n$ is not $h$-expansive for some $n\in\N$. By Lemma~\ref{l:iteration_satisfy_assumptions}, for each $n\in\N$, $f^n\:(\fX_n,X)\rightarrow(\fX_0,X)$ and $\{\cU_{nk}\}_{k\in\N_0}$ satisfy the Assumptions in Section~\ref{s:assumption}, and when $n$ is sufficiently large, $\cU_{n(k+1)}$ is a refinement of $\cU_{nk}$ for each $k\in\N_0$. Thus, we can assume that $\cU_{n+1}$ is a refinement of $\cU_n$ for each $n$.
    \smallskip
    \item[\textnormal{(ii)}] By [{\bf Deg}], for each $x\in X$, the forward orbit $\{f^n(x)\}_{n\in\N_0}$ contains at most $\lceil\log_2 p\rceil$ many branch points. Thus, there exists a branch point whose preimages under iterations of $f$ contain no branch point. We may assume that $\deg(f;x)=1$ for each $n\in\N$ and each $x\in f^{-n}(x_0)$. 
\end{itemize} 

    %Denote $\cW_n \=\{U\cap X : U\in\cU_n\}$ for each $n\in\N_0$.
    By Lemma~\ref{l:Wn_refining}, $\{\cW_n\}_{n\in\N_0}$ is a refining sequence of open covers of $X$. Thus, it follows from (\ref{e:H(A|B)}) and (\ref{e:top cond entropy}) in Definition~\ref{d:conditional_tail_entropy} that it suffices to prove that
    \begin{equation*}
        h(f_X|\cW_m)=
        \lim_{l\to +\infty} \lim_{n\to +\infty}
        \frac{1}{n}H
        \biggl(
        \bigvee_{i=0}^{n-1} f_X^{-i}(\cW_l)
        \,\bigg|\,
        \bigvee_{j=0}^{n-1} f_X^{-j}(\cW_m)
        \biggr)
        >0
    \end{equation*}
    for each $m\in\N$ sufficiently large.

    Let $k\=\deg(f;x_0)$. Then $k>1$. By {\bf [Expans]}, for each sufficiently large $m\in\N$, there exists $U_0^m\in\cU_m$ such that $x_0\in U_0^m$ and that $f|_{U_0^m}\:U_0^m\rightarrow f(U_0^{m})$ is an FBC map of degree $k$. 
    Fix such $m$ and $U_0^m$, and denote $U_0^{m-1}\=f(U_0^m)$.

    Fix $y_0\in U_0^{m-1}$ such that $y_0\in X\setminus \post(f)$. Such $y_0$ exists since $\post(f)$ is nowhere dense in $X$ (see Proposition~\ref{p:Pf nowhere dense}). Since 
    \begin{equation*}
    \sum_{y\in U_0^m,\,f(y)=y_0}\deg(f;y)=k,
    \end{equation*}
    there exists $k$ inverse images of $y_0$ under $f$ in $U_0^m$. We enumerate $f^{-1}(y_0)\cap U_0^m=\{z_1,\,z_2,\,\cdots,\,z_k\}$.
    By Proposition~\ref{p:roundness}~(i), we can choose a sufficiently large integer $m'\in\N$ and a level $m'$ good open set ${U_0^{m'}\in\cU_{m'}}$ such that the following properties hold:
    \begin{enumerate}
        \smallskip
        \item[\textnormal{(1)}] $y_0\in U_0^{m'}$ and $U_0^{m'}\subseteq U_0^{m-1}$.
        \smallskip
        \item[\textnormal{(2)}] For all $U,\,U'\in\cU_{m'+1}$, if $f(U)=f(U')=U_0^{m'}$ and $z_i\in U$, $z_j\in U'$ for some $i,\,j\in\{1,\,2,\,\cdots,\,k\}$ with $i\neq j$, then $d(U,U') \=\inf\{d(x,y):x\in U,\,y\in U'\}>0$.
    \end{enumerate}

    For each $i\in\{1,\,2,\,\cdots,\,k\}$, choose $\tU_i\in\cU_{m'+1}$ such that $f \bigl( \tU_i \bigr)=U_0^{m'}$ and that $z_i\in\tU_i$. Then $d \bigl( \tU_i, \tU_j \bigr)>0$ for all $1\leq i < j\leq k$. We denote
    \begin{equation}   \label{e:Pf_t_Intro_not h-expansive_delta}
    	\delta \=\min \bigl\{d \bigl(\tU_i,\tU_j \bigr) : 1\leq i < j\leq k \bigr\}>0.
    \end{equation}
    
    Fix an arbitrary natural number 
    \begin{equation*}
    	l>\log_\theta\frac{\delta}{C'd_0},
    \end{equation*}
    where $C'>0$, $d_0>0$, and $\theta\in(0,1)$ are the constants from Proposition~\ref{p:comparability}.
    
    By Proposition~\ref{p:repellors are fractal}, there exists $t\in\N$ and ${V^{m+t}\in\cU_{m+t}}$, such that $f^t(V^{m+t})=U_0^m$, and that $V^{m+t}\subseteq U_0^{m'}$. We denote $V^{m+r}\=f^{t-r}(V^{m+t})$ for each $r \in \{0,\, 1,\, \dots, \, t-1\}$. In particular, $V^m=U_0^m$.

    We will construct a sequence $\{A_n\}_{n\in\N_0}$ of subsets of $X$ such that the following conditions hold for each $n\in\N_0$:
    \begin{enumerate}
    	\smallskip
        \item[\textnormal{(1)}] $A_n\subseteq A$ for some $A\in\bigvee_{i=0}^{n}f^{-i}(\cU_m)$.
        \smallskip
        \item[\textnormal{(2)}] $f(A_n)=A_{n-1}$ if $n>0$.
        \smallskip
        \item[\textnormal{(3)}] $\card(A_n)=k^{\lfloor\frac{n}{t+1}\rfloor}$.
        \smallskip
        \item[\textnormal{(4)}] $A_n\subseteq V^{m+r}$ if $n=(t+1)s+r$, where $s\in\N_0$ and $r\in\{0,\,1,\,\dots,\,t\}$.
        \smallskip
        \item[\textnormal{(5)}] For $B\in\bigvee_{i=0}^n f^{-i}(\cU_l)$ and $x,\,y\in A_n$ with $x\neq y$, we have $\{x,\,y\}\nsubseteq B$.
    \end{enumerate}
    
    We now construct $A_n$ recursively.
    
    Let $A_0=\{x_0\}$. Clearly $A_0$ satisfies conditions~(1) through~(5).

    Make the induction hypothesis that $A_n$ is defined and satisfies conditions~(1) through~(5) for each $n\in\{0,\,1,\,\dots,\,(t+1)s+r\}$, where $s\in\N_0$ and $r\in\{0,\,1,\,\dots,\,t\}$. We construct $A_{(t+1)s+r+1}$ in the following cases depending on $r$, and show that $A_{(t+1)s+r+1}$ satisfies conditions~(1) through~(5) (with $n\=(t+1)s + r+1$).

    \textit{Case 1.} Assume $0\leq r<t$.

    By our induction hypothesis, $A_{(t+1)s+r}\in V^{m+r}$. For each $x\in A_{(t+1)s+r}$, choose one point $x'\in V^{m+r+1}$ such that $f(x')=x$. Define $A_{(t+1)s+r+1}$ to be the collection of all such $x'$ that correspond to $x\in A_{(t+1)s+r}$. 

    Clearly, conditions~(2) and~(4) are satisfied by definition. Now we verify conditions~(1), (3), and~(5).
    
    We first verify condition~(1). By our induction hypothesis, $A_{(t+1)s+r}\subseteq A$ for some $A\in\bigvee_{i=0}^{(t+1)s+r}f^{-i}(\cU_m)$. Suppose that $A=\bigcap_{i=0}^{(t+1)s+r}f^{-i}\bigl(\hU_i^m\bigr)$ for some $\hU_i^m\in\cU_m$ for each $i\in\{0,\,1,\,\dots,\,(t+1)s+r\}$. Since $A_{(t+1)s+r+1}\subseteq V^{m+r+1}$, and $\{\cU_n\}_{n\in\N_0}$ is a refining sequence, there exists $\hU^m\in\cU_m$ such that $A_{(t+1)s+r+1}\subseteq V^{m+r+1}\subseteq \hU^m$. Thus
    \begin{equation*}
        A_{(t+1)s+r+1}\subseteq\hU^m\cap\bigcap_{i=1}^{(t+1)s+r+1}f^{-i}\bigl(\hU_{i-1}^m\bigr)\in\bigvee_{i=0}^{(t+1)s+r+1}f^{-i}(\cU_m).
    \end{equation*}
    So, condition~(1) is verified.

    Next, we verify condition~(3). By definition, $\card\bigl(A_{(t+1)s+r+1}\bigr)=\card\bigl(A_{(t+1)s+r}\bigr)$. When $0\leq r<t$, we have $\bigl\lfloor\frac{(t+1)s+r+1}{t+1}\bigr\rfloor=s=\bigl\lfloor\frac{(t+1)s+r}{t+1}\bigr\rfloor$, and condition~(3) is verified.

    Then, we verify condition~(5). Suppose that there are $x,\,y\in A_{(t+1)s+r+1}$ with $x\neq y$, such that $\{x,\,y\}\subseteq B$ for some $B\in\bigvee_{i=0}^{(t+1)s+r+1}f^{-i}(\cU_l)$. Suppose $B=\bigcap_{i=0}^{(t+1)s+r+1}f^{-i}\bigl(\hU_i^l\bigr)$ for some $\hU_i^l\in\cU_l$ for each $i\in\{0,\,1,\,\cdots,\,(t+1)s+r+1\}$.
    By definition $f(x),\,f(y)\in A_{(t+1)s+r}$ and $f(x)\neq f(y)$. However, 
    \begin{equation*}
    	\{f(x),\,f(y)\}\subseteq\bigcap_{i=0}^{(t+1)s+r}f^{-i}\bigl(\hU_{i+1}^l\bigr)\in\bigvee_{i=0}^{(t+1)s+r}f^{-i}(\cU_l),
    \end{equation*}
    which contradicts the induction hypothesis. So, condition~(5) is verified.
    
    \smallskip
    \textit{Case 2.} Assume $r=t$, then $(t+1)s+r+1=(t+1)(s+1)$.
    
    By our induction hypothesis $A_{(t+1)s+t}\subseteq V^{m+t}\subseteq U_0^{m'}$. Fix an arbitrary number $i\in\{1,\,2,\,\dots,\,k\}$. For each $x\in A_{(t+1)s+t}$, choose one point $x'\in \tU_i$ such that $f(x')=x$. Define $A^i_{(t+1)(s+1)}$ to be the collection of all such $x'$ corresponding to some $x\in A_{(t+1)s+t}$. 
    
    Then we define $A_{(t+1)(s+1)}\=\bigcup_{i=1}^k A^i_{(t+1)(s+1)}$.

    Clearly, conditions~(2) and~(4) are satisfied by definition, and condition~(1) can be verified by the same process as in \textit{Case 1.}  It suffice to verify conditions~(3) and~(5).

    First, we verify condition~(3). By construction, $\card\bigl(A^i_{(t+1)(s+1)}\bigr)=\card\bigl(A_{(t+1)s+t}\bigr)$. Since $A_{(t+1)(s+1)}^i\subseteq \tU_i$ for each $i\in\{1,\,2,\,\dots,\,k\}$, and $\tU_i\cap\tU_j=\emptyset$ for each pair $i,\,j\in\{1,\,2,\,\dots,\,k\}$ with $i\neq j$, we have
    \begin{equation*}
    	\card \bigl( A_{(t+1)(s+1)} \bigr)=k\cdot\card \bigl( A_{(t+1)s+t)} \bigr) = k^{\lfloor\frac{(t+1)s+t}{t+1}\rfloor+1}=k^{s+1},
    \end{equation*}
    and condition~(3) is verified.

    Next, we verify condition~(5). Suppose that there are $x,\,y\in A_{(t+1)(s+1)}$ with $x\neq y$ such that $\{x,\,y\}\subseteq B$ for some $B\in\bigvee_{a=0}^{(t+1)(s+1)}f^{-a}(\cU_l)$. Suppose $B\subseteq U^l$ for some $U^l\in\cU_l$. By Proposition~\ref{p:comparability}~(i), $\diam(U^l)\leq C'd_0\theta^l$, so $d(x,\,y)<\delta$ since $l>\log_\theta\frac{\delta}{C'd_0}$. Then by (\ref{e:Pf_t_Intro_not h-expansive_delta}) there is some $i\in\{1,\,2,\,\dots,\,k\}$ such that $\{x,\,y\}\subseteq \tU_i$, and by construction $f(x)\neq f(y)$. However, since $\{x,\,y\}\subseteq B$ and $B\in\bigvee_{a=0}^{(t+1)(s+1)}f^{-a}(\cU_l)$, we have
    $\{f(x),\,f(y)\}\subseteq S$ for some $S\in\bigvee_{a=0}^{(t+1)s+t}f^{-a}(\cU_l)$, which contradicts condition~(5) for $A_{(t+1)s+t}$ in the induction hypothesis.

    The recursive construction and the inductive proof of the conditions are now complete. We note that $A_n$ is a subset of $X$ for each $n\in\N_0$.

    \smallskip
     The following properties straightforwardly follow from conditions~(1), (3), and~(5).
    \begin{enumerate}
    	\smallskip
        \item[\textnormal{(i)}] $A_n\subseteq A$ for some $A\in\bigvee_{i=0}^{n}f_X^{-i}(\cW_m)$.
        \smallskip
        \item[\textnormal{(ii)}] $\card(A_n)=k^{\lfloor\frac{n}{t+1}\rfloor}$.
        \smallskip
        \item[\textnormal{(iii)}] For all $B\in\bigvee_{i=0}^{n}f_X^{-i}(\cW_l)$ and $x,\,y\in A_n$ with $x\neq y$, we have $\{x,\,y\}\nsubseteq B$.
    \end{enumerate}

    So for such $A\in\bigvee_{i=0}^{n}f_X^{-i}(\cW_m)$ that contains $A_n$, if $\cB\subseteq\bigvee_{i=0}^{n}f_X^{-i}(\cW_l)$ satisfies $A\subseteq\bigcup\cB$, then $\card(\cB)\geq\card(A_n)=k^{\lfloor\frac{n}{t+1}\rfloor}$.

    As a consequence, for each sufficiently large $m\in\N$, there exists $t_m\in\N$, such that for each sufficiently large $l\in\N$ and each $n\in\N$, by (\ref{e:H(A|B)}) in Definition~\ref{d:conditional_tail_entropy}, we have 
    \begin{equation}   \label{e:Pf_t_Intro_not h-expansive_H_lower_bound}
        H\biggl(
        \bigvee_{i=0}^{n-1} f_X^{-i}(\cW_l)
        \,\bigg|\,
        \bigvee_{j=0}^{n-1} f_X^{-j}(\cW_m)
        \biggr)
        \geq
        \log k^{\lfloor\frac{n-1}{t_m+1}\rfloor}.
    \end{equation}
    Thus for each sufficiently large $m\in\N$, by (\ref{e:top cond entropy}) in Definition~\ref{d:conditional_tail_entropy} and (\ref{e:Pf_t_Intro_not h-expansive_H_lower_bound}), we have
    \begin{equation*}
    \begin{aligned}
        h(f_X|\cW_m)
        =&
        \lim_{l\to +\infty} \lim_{n\to +\infty}
        \frac{1}{n}H\biggl(
        \bigvee_{i=0}^{n-1} f_X^{-i}(\cW_l)
        \,\bigg|\,
        \bigvee_{j=0}^{n-1} f_X^{-j}(\cW_m)
        \biggr)\\
        \geq&
        \liminf_{l\to +\infty} \liminf_{n\to +\infty}
        \frac{1}{n}\log k^{\lfloor\frac{n-1}{t_m+1}\rfloor}
%        =&
%        \liminf_{l\to +\infty} \liminf_{n\to +\infty}
%        \frac{1}{n}{\Bigl\lfloor\frac{n-1}{t_m+1}\Bigr\rfloor}\log k  \\
        =\frac{\log k}{t_m+1}>0.
    \end{aligned}
    \end{equation*}
    Therefore, the map $f|_X$ is not $h$-expansive, and the proof is complete.
\end{proof}

	Here we give a quick proof of Theorem~\ref{t:Intro_expansive and h-expansive}.
\begin{proof}[Proof of Theorem~\ref{t:Intro_expansive and h-expansive}]
	The proof follows from Remark~\ref{r: expans=>h-expans=>asym h-expans}, Theorems~\ref{t:expansive} and \ref{t:not h-expansive}.
	
	\smallskip
	
	(i)$\Rightarrow$(ii): When there is no branch point in $X$, by Theorem~\ref{t:expansive}, $f|_X$ is forward expansive.
	
	\smallskip
	
	(ii)$\Rightarrow$(iii): By Remark~\ref{r: expans=>h-expans=>asym h-expans}, $f|_X$ is $h$-expansive whenever $f|_X$ is forward expansive.
	%It suffices to show that a forward expansive map on a compact metric space is $h$-expansive. We adopt here the definition of $h$-expansiveness in \cite{Bo72}. A continuous map $g\:Y\to Y$ on a compact metric space $(Y,d)$ is $h$-expansive if
	%$$
	%\sup_{x\in Y} h(g,\Phi_\epsilon(x))=0
	%$$
	%for some $\epsilon>0$. Here $\Phi_\epsilon(x)$ is defined as
	%$$
	%\Phi_\epsilon(x)\=\bigl\{y\in Y:d(g^n(x),g^n(y))\leq\epsilon\mbox{ for each }n\in\N_0\bigr\},
	%$$
	%and $h(g,K)$ of $g$ and a compact $K\subseteq Y$ is defined as
	%$$
	%h(g,K)\=\lim_{\delta\to 0}\limsup_{n\to +\infty}\frac{1}{n}\log r_n(g,K,\delta),
	%$$
	%where $r_n(g,K,\delta)$ stands for the minimum cardinality of a set that $(n,\delta)$-spans $K$.
	
	%Suppose that $g$ is forward expansive, then there exists $\epsilon>0$, such that $\Phi_\epsilon(x)=\{x\}$ for each $x\in Y$. It follows that $h(g,\Phi_\epsilon(x))=0$ for each $x\in Y$, and thus $g$ is $h$-expansive.
	
	\smallskip
	
	(iii)$\Rightarrow$(i): Assume on the contrary that $X\cap B_f\neq\emptyset$. Then, by Theorem~\ref{t:not h-expansive}, $f|_X$ is not $h$-expansive. Therefore, $X\cap B_f=\emptyset$ if $f|_X$ is $h$-expansive.
\end{proof}

%end not h-expansive %end not h-expansive %end not h-expansive
 
%begin Equilibrium state
\section{Equilibrium states for metric CXC systems}\label{s:equilibrium states}
In this section, we will establish the existence of an equilibrium state for a metric CXC system and a continuous potential.

\subsection{Basic concepts}
We start with a brief review of some concepts from dynamics. We refer the reader to \cite{PU10} for a more detailed study of these concepts.

Let $(Z,d)$ be a compact metric space and $g\:Z\rightarrow Z$ a continuous map. For each $n\in\N$ the \defn{dynamical distance} is defined as 
\begin{equation*}
    d_g^n(x,y)\=\max\bigl\{d \bigl( g^k(x),g^k(y) \bigr) : k\in\{0,\,1,\,\dots,\,n-1\}\bigr\},
\end{equation*}
for $x,y\in Z$. We can check that $d_g^n$ is a metric on $Z$. A set $F\subseteq Z$ is \defn{$(n,\epsilon)$-separated} for $n\in\N$ and $\epsilon>0$ if for each pair of distinct points $x,y\in F$, we have $d_g^n(x,y)>\epsilon$. For $n\in\N$ and $\epsilon>0$, we use $F_n(\epsilon)$ to denote a maximal (in the sense of inclusion) $(n,\epsilon)$-separated set.

For each real-valued continuous function $\psi\in C(Z)$, the following limits exist
\begin{equation*}
        P(g,\psi)
        \=\lim_{\epsilon\to 0}\limsup_{n\to +\infty}\frac{1}{n}\log\sum_{x\in F_n(\epsilon)}\exp{(S_n\psi(x))},
\end{equation*}
where $S_n\psi(x) =\sum_{j=0}^{n-1}\psi(g^j(x))$. Note that $P(g,\psi)$ above is independent of the choices of $F_n(\epsilon)$ and is independent of $d$ as long as the topology defined by $d$ remains the same. We call $P(g,\psi)$ the \defn{topological pressure} of $g$ with respect to potential $\psi$, and the quantity $h_{\top}(g) \=P(g,0)$ is called the \defn{topological entropy} of $g$. 

A \defn{measurable partition} $\cA$ of $Z$ is a finite or countably infinite collection $\cA=\{A_j:j\in J\}$ of mutually disjoint Borel sets with $\bigcup\cA=Z$, where $J$ is a finite or countably infinite index set. 
For $x\in Z$ we denote by $\cA(x)$ the unique element of $\cA$ such that $x\in\cA(x)$. 
Let $\cA=\{A_j:j\in J\}$ and $\cB=\{B_k:k\in K\}$ be measurable partitions of $Z$, we say $\cA$ is a \defn{refinement} of $\cB$ if for each $A_j\in\cA$ there exist $B_k\in\cB$ with $A_j\subseteq B_k$. 
The \defn{common refinement} of $\cA$ and $\cB$ is defined as $\cA\vee\cB \=\{A_j\cap B_k:A_j\in\cA,\ B_k\in\cB\}$, which is also a measurable partition. 
For a continuous map $g\:Z\rightarrow Z$, we set $g^{-1}(\cA) \=\{g^{-1}(A_j):A_j\in\cA\}$, and denote for $n\in\N$,
\begin{equation*}
    \cA_g^n \=\bigvee_{j=0}^{n-1}g^{-j}(\cA)=\cA\vee g^{-1}(\cA)\vee\dots\vee g^{-(n-1)}(\cA).
\end{equation*}

Let $\cP(Z)$ denote the set of Borel probability measures and $\cM(Z,g)$ denote the set of $g$-invariant Borel probability measures on $Z$, by Krylov--Bogolyubov theorem $\cM(Z,g)\neq\emptyset$. 
%For $\mu\in\cM(Z,g)$, the \defn{information function} is defined in the following way:
%\begin{equation*}
%    I_\mu(\cA)(x)=-\log(\mu(\cA(x))).
%\end{equation*}
%The function $I_\mu$ maps a measurable partition $\cA$ to a $\mu$-a.e.\ defined real-valued function $I_\mu(\cA)$. 
The entropy of a measurable partition $\cA$ is
\begin{equation*}
     H_\mu(\cA)
     \=\sum_{j\in J} -\mu(A_j)\log(\mu(A_j))
\end{equation*}
where $0\log 0$ is defined to be $0$. If $H_\mu(\cA)<+\infty$, then the limit $h_\mu(g,\cA)\=\lim\limits_{n\to +\infty}\frac{1}{n}H_\mu\bigl(\cA_g^n\bigr)$ exists,
and the \defn{measure-theoretic entropy} of $g$ for $\mu$ is given by
\begin{equation*}
    h_\mu(g)\=\sup\{h_\mu(g,\cA)\},
\end{equation*}
where $\cA$ ranges over all measurable partitions with $h_\mu(g,\cA)<+\infty$.
For each $\psi\in C(Z)$, the \defn{measure-theoretic pressure} $P_\mu(g,\psi)$ of $g$ for the measure $\mu$ and potential $\psi$ is
\begin{equation*}
    P_\mu(g,\psi) \=h_\mu(g)+\int\!\psi\diff \mu.
\end{equation*}

By the Variational Principle, we have for each $\psi\in C(Z)$
\begin{equation}\label{e:VP1}
    P(g,\psi)=\sup\{P_\mu(g,\psi):\mu\in\cM(Z,g)\}.
\end{equation}
In particular, when $\psi$ is the constant function $0$, we have
\begin{equation}\label{e:VP2}
    h_{\top}(g)=\sup\{h_\mu(g):\mu\in\cM(Z,g)\}.
\end{equation}
A measure $\mu\in\cM(Z,g)$ that attains the supremum in (\ref{e:VP1}) is called an \defn{equilibrium state} for the transformation $g$ and potential $\psi$. A measure $\mu\in\cM(Z,g)$ that attains the supremum in (\ref{e:VP2}) is called a \defn{measure of maximal entropy} for $g$.

\subsection{Existence of equilibrium states} 
In this subsection, we show that equilibrium states exist for metric CXC systems and continuous potentials.

Let $Z$ be a compact metric space and $g\:Z\rightarrow Z$ a continuous map. First, we recall that $\cM(Z,g)$ is weak$^*$ compact. We refer the reader to \cite{PU10} for more details.

%\begin{prop}\label{p:weak* compact}
%Let $g\:Z\rightarrow Z$ be a continuous map on a compact metric space $Z$, then the set $\cM(Z,g)$ of $g$-invariant Borel probability measure is compact with respect to the weak$^*$ topology.
%\end{prop}
%\begin{proof}
%    Consider the transformation $g_*\:\cP(Z)\rightarrow\cP(Z)$, $\mu\mapsto\mu\circ g^{-1}$. Then $g_*$ is continuous in the weak$^*$ topology. As the set of fixed points of $g_*$, $\cM(Z,g)$ is closed in $\cP(Z)$. By Theorem~2.1.6 in \cite{PU10}, the space $\cP(Z)$ of Borel probability measures on $Z$ is compact with respect to the weak$^*$ topology. Thus, $\cM(Z,g)$ is compact.
%\end{proof}

The following proposition gives a consequence of the asymptotic $h$-expansiveness. We refer the reader to \cite{Mi76} for more details.
\begin{prop}\label{p:upper semi continuous}
    Let $g\:Z\rightarrow Z$ be a continuous map on a compact metric space $Z$. If $g$ is asymptotically $h$-expansive, then the map
    \begin{equation*}
        h_\centerdot (g)\:\cM(Z,g)\rightarrow\R,\ \mu\mapsto h_\mu(g)
    \end{equation*}
    is upper semi-continuous with respect to the weak$^*$ topology.
\end{prop}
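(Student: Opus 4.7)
The plan is to implement Misiurewicz's classical argument relating measure-theoretic entropy to the topological tail entropy. Fix $\mu\in\cM(Z,g)$ and a sequence $\{\mu_n\}_{n\in\N}\subseteq\cM(Z,g)$ with $\mu_n\to\mu$ in the weak$^*$ topology; I need to show $\limsup_{n\to+\infty}h_{\mu_n}(g)\leq h_\mu(g)$. Fix $\varepsilon>0$. Since $h^*(g)=0$ by hypothesis, formula (\ref{e:top tail entropy}) supplies refining sequences of open covers $\{\eta_m\}_{m\in\N_0}$ and $\{\xi_l\}_{l\in\N_0}$ with $\lim_{m\to+\infty}\lim_{l\to+\infty}h(g,\xi_l|\eta_m)=0$. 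By choosing $m$ large enough I may therefore select a single finite open cover $\eta\=\eta_m$ with $h(g|\eta)<\varepsilon$.

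Next I would produce a finite Borel partition $\cQ=\{Q_1,\ldots,Q_k\}$ of $Z$ subordinate to $\eta$ (each $Q_i$ is contained in some element of $\eta$) whose atoms have boundaries of $\mu$-measure zero, that is, $\mu(\partial Q_i)=0$ for every $i$. This is built in the standard way: cover $Z$ by finitely many open balls contained in elements of $\eta$, choosing the radii outside the countable set of values for which the corresponding sphere has positive $\mu$-mass; then form $\cQ$ from the Boolean algebra generated by these balls. The Misiurewicz comparison inequality, which is the main technical input, then asserts
\begin{equation*}
h_\nu(g)\leq h_\nu(g,\cQ)+h(g|\eta)<h_\nu(g,\cQ)+\varepsilon
\end{equation*}
for every $\nu\in\cM(Z,g)$. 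Its proof proceeds through the Rokhlin-type identity $h_\nu(g,\cP\vee\cQ)=h_\nu(g,\cQ)+h_\nu(g,\cP\,|\,\cQ_g^\infty)$: for any finite measurable $\cP$ one bounds the difference $h_\nu(g,\cP\vee\cQ)-h_\nu(g,\cQ)$ by a conditional-entropy quantity controlled by $h(g,\cP|\eta)$, then takes a supremum over $\cP$ and passes to a refining sequence to obtain $h(g|\eta)$ on the right.

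The final step is the routine observation that $\nu\mapsto h_\nu(g,\cQ)$ is upper semi-continuous at $\mu$. Because $\mu(\partial Q_i)=0$ for all $i$, the Portmanteau theorem makes $\nu\mapsto\nu(E)$ continuous at $\mu$ for every element $E$ of the refinement $\cQ_g^n$, so $\nu\mapsto H_\nu(\cQ_g^n)$ is continuous at $\mu$ for each $n\in\N$. Since $n\mapsto H_\nu(\cQ_g^n)$ is sub-additive for $g$-invariant $\nu$, we have $h_\nu(g,\cQ)=\inf_{n\in\N}\tfrac{1}{n}H_\nu(\cQ_g^n)$, and an infimum of continuous functions is upper semi-continuous. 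Combining the pieces,
\begin{equation*}
\limsup_{n\to+\infty}h_{\mu_n}(g)\leq\limsup_{n\to+\infty}h_{\mu_n}(g,\cQ)+\varepsilon\leq h_\mu(g,\cQ)+\varepsilon\leq h_\mu(g)+\varepsilon,
\end{equation*}
and letting $\varepsilon\downarrow 0$ gives the conclusion.

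The main obstacle is the Misiurewicz comparison inequality, which is the point at which the combinatorial quantity $h(g|\eta)$ must be brought into contact with measure-theoretic entropy; everything else is either Portmanteau continuity or the standard subadditivity trick. A secondary but routine nuisance is the construction of $\cQ$: the radii of the defining balls must be chosen to simultaneously avoid the countably many bad values for which $\mu$ charges a sphere and to remain subordinate to $\eta$.
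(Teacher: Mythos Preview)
Your sketch is correct and is precisely Misiurewicz's argument from \cite{Mi76}. Note that the paper does not give its own proof of this proposition at all: it states the result and refers the reader to \cite{Mi76}, so there is no proof in the paper to compare against. Your outline faithfully reproduces the standard proof---the comparison inequality $h_\nu(g)\leq h_\nu(g,\cQ)+h(g|\eta)$ for partitions $\cQ$ subordinate to $\eta$, combined with upper semi-continuity of $\nu\mapsto h_\nu(g,\cQ)$ at measures giving zero mass to $\partial\cQ$---and you have correctly identified the Misiurewicz comparison inequality as the substantive step.
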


Finally, we establish the existence of equilibrium states for metric CXC systems.

\begin{proof}[Proof of Theorem~\ref{t:Intro existence equilibrium}] 
    Let $f\:(\fX_1,X)\rightarrow(\fX_0,X)$ and $\{\cU_0\}_{n\in\N_0}$ satisfy the Assumptions in Section~\ref{s:assumption}, and $\phi\in C(X)$. Note that $X$ is compact.
    %Denote $f_X\=f|_X$. 

    By Proposition~\ref{p:upper semi continuous} and Theorem~\ref{t:Intro asym h-expans}, the map
    $h_\centerdot (f_X)\:\cM(X,f_X)\rightarrow\R,\ \mu\mapsto h_\mu(f_X)$
    is upper semi-continuous. Then since $\phi\in C(X)$, the map
     \begin{equation*}
        P_\centerdot (f_X,\phi)\:\cM(X,f_X)\rightarrow\R,\ \mu\mapsto P_\mu(X,f_X)=h_\mu(f_X)+\int\! \phi\diff \mu
    \end{equation*}
    is upper semi-continuous.    
    Since $\cM(X,f_X)$ is weak$^*$ compact, there exists $\nu\in\cM(X,f_X)$ that attains the supremum in
        $\sup\{P_\mu(f_X,\phi):\mu\in\cM(X,f_X)\}$.        
    By the Variational Principle, such $\nu\in\cM(X,f_X)$ is an equilibrium state.
\end{proof}
In particular, let $\phi$ be the constant function $0$, then we obtain the existence of measures of maximal entropy.
\begin{cor}
   Let $f\:(\fX_1, X)\rightarrow(\fX_0, X)$ and $\{\cU_n\}_{n\in\N_0}$ satisfy the Assumptions in Section~\ref{s:assumption}, then there exists at least one measure of maximal entropy for the map $f|_X$.
\end{cor}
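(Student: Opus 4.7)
The plan is to deduce this corollary directly from Theorem~\ref{t:Intro existence equilibrium} by specializing the potential $\phi$ to be the zero function. Observe that when $\phi \equiv 0$, the measure-theoretic pressure reduces to measure-theoretic entropy: for every $\mu \in \cM(X, f_X)$ we have $P_\mu(f_X, 0) = h_\mu(f_X) + \int 0 \diff\mu = h_\mu(f_X)$. In light of the Variational Principle (\ref{e:VP1})--(\ref{e:VP2}), this means a measure $\nu \in \cM(X, f_X)$ is an equilibrium state for $f_X$ and the zero potential if and only if it attains the supremum $\sup\{h_\mu(f_X) : \mu \in \cM(X, f_X)\}$, i.e., if and only if $\nu$ is a measure of maximal entropy for $f_X$.

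Therefore, the proof is simply to apply Theorem~\ref{t:Intro existence equilibrium} with the continuous function $\phi \equiv 0 \in C(X)$. This yields at least one equilibrium state for $f_X$ and the zero potential, which by the identification above is precisely a measure of maximal entropy for $f_X$.

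Alternatively, if one prefers a self-contained argument that does not invoke Theorem~\ref{t:Intro existence equilibrium} as a black box, one can repeat the short proof of that theorem in the special case $\phi \equiv 0$: by Theorem~\ref{t:Intro asym h-expans}, the map $f_X$ is asymptotically $h$-expansive; by Proposition~\ref{p:upper semi continuous}, the entropy functional $\mu \mapsto h_\mu(f_X)$ is upper semi-continuous on $\cM(X, f_X)$ with respect to the weak$^*$ topology; and since $\cM(X, f_X)$ is weak$^*$ compact, this upper semi-continuous functional attains its supremum on $\cM(X, f_X)$, providing the desired maximizer. There is no genuine obstacle in either approach, as the corollary is an immediate specialization of Theorem~\ref{t:Intro existence equilibrium}.
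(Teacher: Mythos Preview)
Your proposal is correct and matches the paper's approach exactly: the paper simply remarks that setting $\phi$ to be the constant function $0$ in Theorem~\ref{t:Intro existence equilibrium} yields the corollary, which is precisely your first argument. Your alternative self-contained version is also fine and just unwinds the proof of that theorem in this special case.
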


\section{Level-$2$ large deviation principle and equidistribution}\label{s:LD}
In this section, we show a consequence of our previous results, namely, a level-$2$ large deviation principle for iterated preimages under additional assumptions, from which we obtain an equidistribution result.
\subsection{Level-$2$ large deviation principle}\label{ss:level-2 LD}
Let $Z$ be a compact metric space, and $\cP(Z)$ be the set of Borel probability measures on $Z$ equipped with the weak$^*$ topology. Note this topology is metrizable (see for example, \cite[Theorem~5.1]{Co85}). Let $I\:\cP(Z)\rightarrow [0,+\infty]$ be a lower semi-continuous function.

A sequence $\{\Omega_n\}_{n\in\N}$ of Borel probability measures on $\cP(Z)$ is said to satisfy a \defn{level-$2$ large deviation principle with rate function $I$} if for each closed subset $\fF$ of $\cP(Z)$ and each open subset $\fG$ of $\cP(Z)$ we have
\begin{equation*}
\begin{aligned}
    \limsup_{n\to+\infty}\frac{1}{n}\log (\Omega_n(\fF))&\leq-\inf\{I(x)\:x\in\fF\},\\
    \liminf_{n\to+\infty}\frac{1}{n}\log (\Omega_n(\fG))&\geq-\inf\{I(x)\:x\in\fG\}.
\end{aligned}
\end{equation*}

We state a theorem due to Y.~Kifer \cite[Theorem~4.3]{Ki90}, reformulated by H.~Comman and J.~Rivera-Letelier \cite[Theorem~C]{CRL11}. 

\begin{theorem}[Kifer \cite{Ki90}, Comman  \& Rivera-Letelier \cite{CRL11}]
\label{t:LD, Kifer and Juan}
    Let $Z$ be a compact metrizable space, and $g\:Z\rightarrow Z$ be a continuous map. Fix $\phi\in C(Z)$, and let $H$ be a dense vector subspace of $C(Z)$ with respect to the uniform norm. Let $I^\phi\:\cP(Z)\rightarrow [0,+\infty]$ be the function defined by
    \begin{equation*}
    I^\phi(\mu) \=
    \begin{cases}
        P(g,\phi)-\int\!\phi\diff \mu-h_\mu(g),&\mbox{ if }\mu\in\cM(Z,g);\\
        +\infty, &\mbox{ if } \mu\in\cP(Z)\setminus \cM(Z,g).
    \end{cases}
    \end{equation*}

    We assume the following conditions are satisfied:
    \begin{enumerate}
    	\smallskip
        \item[\textnormal{(i)}] The function $h_\centerdot(g)\:\cM(Z,g)\rightarrow\R$, $\mu\mapsto h_\mu(g)$, is finite and upper semi-con\-tin\-u\-ous.
        \smallskip
        \item[\textnormal{(ii)}] For each $\psi\in H$, there exists a unique equilibrium state for $g$ and potential $\phi+\psi$.
    \end{enumerate}
    Let $\{\Omega_n\}_{n\in\N}$ be a sequence of Borel probability measures on $\cP(Z)$ with the property that for each $\psi\in H$, 
    \begin{equation*}
        \lim_{n\to+\infty}\frac{1}{n}\log\int_{\cP(Z)}\!\exp{\left(n\int\!\psi \diff\mu\right)} \diff\Omega_n(\mu)=P(g,\phi+\psi)-P(g,\phi).
    \end{equation*}
    Then $\{\Omega_n\}_{n\in\N}$ satisfies a level-$2$ large deviation principle with rate function $I^\phi$, and it converges in the weak$^*$ topology to the Dirac measure supported on the unique equilibrium state for $g$ and potential $\phi$. Furthermore, for each convex
    open subset $\fG$ of $\cP(Z)$ containing some invariant measure, we have
    \begin{equation*}
        \lim_{n\to+\infty}\frac{1}{n}\log(\Omega_n(\fG))
        =\lim_{n\to+\infty}\frac{1}{n}\log(\Omega_n(\closure\fG))
        =-\inf_\fG I^\phi=-\inf_{\closure\fG} I^\phi.
    \end{equation*}
\end{theorem}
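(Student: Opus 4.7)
The plan is to cast the hypothesis as convergence of a log-moment generating function on $\cP(Z)$ and then run a G\"artner--Ellis-type argument in the space of probability measures. Set $\Lambda(\psi) \= P(g,\phi+\psi) - P(g,\phi)$ for $\psi \in C(Z)$; this map is convex and $1$-Lipschitz in $\psi$ (since pressure is), and by hypothesis it equals $\lim_n \frac{1}{n}\log\int\!\exp(n\!\int\!\psi\diff\mu)\diff\Omega_n(\mu)$ for every $\psi \in H$. Density of $H$ in $C(Z)$ together with Lipschitz continuity of the pressure extends this identity to all $\psi \in C(Z)$. The candidate rate function is the Legendre--Fenchel conjugate
\begin{equation*}
\Lambda^*(\mu) \= \sup_{\psi \in C(Z)} \left\{\int\!\psi\diff\mu - \Lambda(\psi)\right\}, \qquad \mu \in \cP(Z).
\end{equation*}

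The next step is to identify $\Lambda^*$ with $I^\phi$. The Variational Principle yields $\Lambda(\psi) \geq h_\mu(g) + \int(\phi+\psi)\diff\mu - P(g,\phi)$ for every $\mu \in \cM(Z,g)$, hence $\Lambda^* \leq I^\phi$ on invariant measures. For non-invariant $\mu$, testing with coboundaries $\psi = c(\chi \circ g - \chi)$, $\chi \in C(Z)$ with $\int\!\chi \circ g \diff\mu \neq \int\!\chi\diff\mu$, and letting $c \to \pm\infty$ forces $\Lambda^*(\mu) = +\infty$. For the reverse inequality on $\cM(Z,g)$, I would use upper semi-continuity of $h_\centerdot(g)$ together with uniqueness of equilibrium states: for $\psi \in H$, the unique equilibrium state $\mu_\psi$ for $\phi + \psi$ satisfies $\Lambda(\psi) + P(g,\phi) = h_{\mu_\psi}(g) + \int(\phi + \psi)\diff\mu_\psi$, and approximating a given $\mu$ by a net of such $\mu_{\psi_k}$ recovers $I^\phi(\mu)$ as the supremum.

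For the LDP upper bound on a closed $\fF \subseteq \cP(Z)$ (which is compact since $Z$ is), I would use the exponential Chebyshev estimate
\begin{equation*}
\Omega_n(\fF) \leq \exp\!\left(-n\inf_{\mu \in \fF}\int\!\psi\diff\mu\right)\int\!\exp\!\left(n\int\!\psi\diff\mu\right)\diff\Omega_n(\mu),
\end{equation*}
cover $\fF$ by weak$^*$-open sets on each of which one optimizes $\psi \in H$, pass to the $\limsup$, and conclude $\limsup_n \frac{1}{n}\log\Omega_n(\fF) \leq -\inf_{\fF} \Lambda^*$. For the lower bound on an open $\fG$, given $\mu \in \fG$ of finite $I^\phi$, approximate $\mu$ by equilibrium states $\mu_{\psi_k}$ for potentials $\phi + \psi_k$ with $\psi_k \in H$; consider the tilted measures $\diff\widetilde\Omega_n^{\,\psi_k} \propto \exp(n\!\int\!\psi_k\diff\nu)\diff\Omega_n(\nu)$, whose log-moment limits are $\Lambda(\psi_k + \cdot) - \Lambda(\psi_k)$. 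The upper bound just proved, applied to the tilted laws, shows that $\widetilde\Omega_n^{\,\psi_k}$ concentrates at $\mu_{\psi_k}$, and unwinding the tilt produces $\liminf_n \frac{1}{n}\log\Omega_n(\fG) \geq -I^\phi(\mu_{\psi_k}) - \varepsilon_k$; letting $k \to \infty$ completes the lower bound.

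The convergence $\Omega_n \to \delta_{\mu_\phi}$ is a standard consequence: the uniqueness hypothesis with $\psi = 0$ yields a unique equilibrium state $\mu_\phi$ for $\phi$, which is the unique zero of $I^\phi$, so every weak$^*$-neighborhood of $\mu_\phi$ has complement of positive infimum and thus $\Omega_n$-mass decaying exponentially. The final equality $\lim_n \frac{1}{n}\log\Omega_n(\fG) = \lim_n \frac{1}{n}\log\Omega_n(\closure{\fG}) = -\inf_\fG I^\phi = -\inf_{\closure{\fG}}I^\phi$ for convex open $\fG$ meeting $\cM(Z,g)$ follows from convexity of $I^\phi$ (inherited from convexity of the pressure and affinity of $\mu \mapsto \int\phi\diff\mu$), which makes $I^\phi$ continuous along line segments joining an interior invariant measure to any boundary point, forcing boundary infima to be attained in the interior. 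The main obstacle is the lower bound: it hinges on uniqueness of equilibrium states, guaranteeing that tilted measures concentrate on a \emph{single} point rather than smearing across a non-trivial equilibrium set, together with density in $\cM(Z,g)$ of the family $\{\mu_\psi : \psi \in H\}$, itself a subtle consequence of G\^ateaux differentiability of the pressure in directions of $H$ derived from uniqueness.
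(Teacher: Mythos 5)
The paper offers no proof of this theorem: it is imported verbatim from Kifer \cite{Ki90} (Theorem~4.3) in the reformulation of Comman and Rivera-Letelier \cite{CRL11} (Theorem~C), so there is nothing internal to compare your argument against. Judged on its own terms, your outline is the standard G\"artner--Ellis/Varadhan route, and several of its steps are sound: the extension of the log-moment limit from $H$ to all of $C(Z)$ by equi-Lipschitz continuity in $\psi$, the coboundary argument forcing $\Lambda^*=+\infty$ off $\cM(Z,g)$, the Chebyshev-plus-compactness upper bound, and the deduction of the weak$^*$ convergence and of the identities for convex open $\fG$ from convexity (in fact affinity) of $I^\phi$ on $\cM(Z,g)$.

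Two steps, however, are genuinely incomplete, and they are where the content of the theorem lies. First, the inequality $\Lambda^*(\mu)\geq I^\phi(\mu)$ for invariant $\mu$ --- which is the direction the upper bound actually requires, since Chebyshev only yields the rate $\Lambda^*$ --- is equivalent to $h_\mu(g)\geq\inf_{\psi\in C(Z)}\bigl\{P(g,\phi+\psi)-\int(\phi+\psi)\diff\mu\bigr\}$; this is the entropy--pressure duality, proved from upper semi-continuity and affinity of $h_\centerdot(g)$ by a Hahn--Banach separation argument (Walters, Theorem~9.12), and it does not follow from ``approximating $\mu$ by a net of equilibrium states''. Second, and more seriously, your lower bound hinges on the claim that $\inf_{\fG}I^\phi$ is approximated by values $I^\phi(\mu_{\psi_k})$ at equilibrium states $\mu_{\psi_k}\in\fG$ with $\psi_k\in H$. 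This requires both $\mu_{\psi_k}\to\mu$ and $\limsup_k I^\phi(\mu_{\psi_k})\leq I^\phi(\mu)$; lower semi-continuity of $I^\phi$ gives only the opposite inequality for free, and the set of exposed points $\{\mu_\psi:\psi\in H\}$ need not be dense in $\cM(Z,g)$. Supplying this reduction is precisely the nontrivial work in \cite{Ki90} (via affinity of the entropy map and convex duality) and in the abstract Laplace-transform criteria used in \cite{CRL11}. As written, the proposal defers the central difficulty of the theorem to an unproved density assertion; the tilting computation and the remaining deductions are fine once that gap is filled.
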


\subsection{The Additional Assumptions}\label{ss:LD addassumptions}
We state below the assumptions under which we will establish a level-$2$ large deviation principle.  We will repeatedly refer to such assumptions later.

Before we state these assumptions, we recall the notion of \defn{strongly path-connectedness}. Here we adopt \cite[Definition~2.3]{DPTUZ21}.
\begin{definition}[Strongly path-connected]
    A topological space $\fX$ is \defn{strongly path-connected} if, for any finite or countably infinite subset $S$ of $\fX$, the space $\fX\setminus S$ is path-connected.
\end{definition}
\begin{addassumptions}
\quad
\begin{enumerate}
	\smallskip
    \item[\textnormal{(1)}] $\fX_0, \fX_1$ are strongly path-connected.
    \smallskip
    \item[\textnormal{(2)}] the branch set $B_f$ is finite. 
\end{enumerate}
\end{addassumptions}
 Note that these additional assumptions were adopted in \cite{DPTUZ21}.

\subsection{Proof of Theorem~\ref{t:Intro_LD}}\label{ss:pf of LD}
The goal of this subsection is to establish Theorem~\ref{t:Intro_LD}. We first recall the notion of geometric coding tree from \cite{DPTUZ21} and then use it to establish a characterization of the topological pressure. With such a characterization, we can apply Theorem~\ref{t:LD, Kifer and Juan} to prove Theorem~\ref{t:Intro_LD}.

In the following context, we assume that $f\:(\fX_1,X)\rightarrow(\fX_0,X)$ and $\{\cU_n\}_{n\in\N_0}$ satisfy the Assumptions in Section~\ref{s:assumption} and the Additional Assumptions in Subsection~\ref{ss:LD addassumptions}, and let $\phi$ be a real-valued H\"older continuous function on $X$.
%We denote $f_X\=f|_X$ for convenience.

For each $x\in X$ and each $n\in\N$, put 
\begin{equation*}
	W_n(x) \=\frac{1}{n}\sum\limits_{i=0}^{n-1}\delta_{f^i(x)}.
\end{equation*}
For a given sequence $\{x_n\}_{n\in\N}$ of points in $ X$, consider the following sequence $\{\Omega_n\}_{n\in\N}$ of Borel probability measures on $\cP(X)$.

{\bf Iterated Preimages:}
\begin{equation*}
    \Omega_n\=
    \sum\limits_{y\in f^{-n}(x_n)}
    \frac{\deg(f^n;y)\cdot\exp{(S_n\phi(y))}}
    {\sum_{z\in f^{-n}(x_n)}\deg(f^n;z)\cdot\exp{(S_n\phi(z))}}\delta_{W_n(y)},
\end{equation*}
where $S_n\phi(x) =\sum_{i=0}^{n-1}\phi(f^i(x))$.
\begin{rem}
    It is clear that the quantities $\sum_{i=0}^{n-1}\phi(f^i(x))$ and $\sum_{i=0}^{n-1}\phi(f_X^i(x))$ are identical. By Proposition~\ref{p:same loc deg}, the quantities $\deg(f^n;x)$ and $\deg(f_X^n;x)$ are identical. Thus, we do not distinguish these quantities in the following context.
\end{rem}

Our goal in this section is to establish a level-$2$ large deviation principle for $\{\Omega_n\}_{n\in\N}$ with rate function $I^\phi$ given in Theorem~\ref{t:LD, Kifer and Juan}. We do this by applying Theorem~\ref{t:LD, Kifer and Juan}. It suffices to verify that the conditions in Theorem~\ref{t:LD, Kifer and Juan} are satisfied.

We first verify that for each $\psi\in C^{0,\alpha}(X)$, 
\begin{equation}\label{e:to verify LD}
    \lim_{n\to+\infty}\frac{1}{n}\log\int_{\cP(X)} \! \exp{\left(n\int\!\psi \diff\mu\right)} \diff\Omega_n(\mu)=P(f_X,\phi+\psi)-P(f_X,\phi).
\end{equation}

By results in \cite{DPTUZ21}, under the Additional Assumption in Subsection~\ref{ss:LD addassumptions}, the topological pressure of a metric CXC system and the shift map on a space of one-sided sequences can be related through a semiconjugacy. We will use this idea to verify (\ref{e:to verify LD}).

\subsubsection{Geometric coding tree and semiconjugacy}
We construct a semiconjugacy from a shift map to $f$ as in \cite{DPTUZ21}. The construction is based on the idea of ``geometric coding tree." We refer the reader to \cite{DPTUZ21} for more details.

Let $\Sigma\=\{1,\,\dots,\,d\}^\N$ be the space of infinite one-sided sequences on $d$ symbols, and $\sigma\:\Sigma\rightarrow\Sigma$ the shift map. We equip $\Sigma$ with the standard metric $\rho(\xi,\eta)\=2^{-\min\{k\geq 0\,:\,\xi_k\neq\eta_k\}}$ for distinct $\xi = (\xi_0,\xi_1,\dots,)$ and $\eta = ( \eta_0, \eta_1, \dots)$ in $\Sigma$.

    Suppose that $f\:(\fX_1,X)\rightarrow(\fX_0,X)$ and $\{\cU_n\}_{n\in\N_0}$ satisfy the Assumptions in Section~\ref{s:assumption} and the Additional Assumptions in Subsection~\ref{ss:LD addassumptions}. Pick $w\in X\setminus \post(f)$, let $w_1,w_2,\dots,w_d$ be all of its preimages under $f$. For each $w_i$, choose a path $\gamma_i$ in $\fX_0$ connecting $w$ to $w_i$ and avoiding $\post(f)$. Such paths exist since $\fX_0$ is strongly path-connected. Here, by a \emph{path} $\gamma$ in $\fX_0$ we mean a continuous map $\gamma \: [0,1]  \rightarrow \fX_0$.

We recall the path-lifting property (see for example, \cite[Lemma~2.15]{DPTUZ21}). Such path-lifting property will be used in the construction of the geometric coding tree.
\begin{prop}\label{p:path lift in DPTUZ21}
    Let $h\:Y\rightarrow Z$ be a finite branched covering map, and $\gamma$ be a continuous path in $Z$ that is disjoint from the set of branch values $V_h$. Let $x=\gamma(0)$, and $\tx\in h^{-1}(x)$. Then there exists a continuous path $\widetilde{\gamma}$ in $Y$ such that $h(\widetilde{\gamma})=\gamma$ and $\widetilde\gamma(0)=\tx$.
\end{prop}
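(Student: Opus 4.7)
The plan is a classical open-closed argument on the set
$$T \= \{t \in [0,1] : \gamma|_{[0,t]} \text{ admits a continuous lift starting at } \tx\}.$$
The crucial geometric input is that, because $\gamma$ avoids $V_h=h(B_h)$, every point of $h^{-1}(\gamma([0,1]))$ has local degree~$1$. Condition~(ii) in the definition of an FBC map then furnishes, around each such preimage $y$, compact neighborhoods $U$ of $y$ and $V$ of $h(y)$ in which every point of $V$ admits a unique preimage in $U$; combined with the openness of $h$ (Proposition~\ref{p:FBC_1}), this shows that $h$ is a local homeomorphism at every point of $h^{-1}(\gamma([0,1]))$. In particular, a continuous lift is locally unique once its starting point is fixed.

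Clearly $0 \in T$. To see that $T$ is open, given $t \in T$ with lift $\tgamma_t$ I would extend it through a local-homeomorphism patch around $\tgamma_t(t)$. For closedness, suppose $t_n \nearrow t$ with $t_n \in T$; local uniqueness of lifts forces all the partial lifts to agree on their common domains, producing a continuous $\tgamma\:[0,t)\to Y$, and it remains to extend $\tgamma$ continuously to $t$.

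This last extension step is the main obstacle. Since $h$ is proper (Proposition~\ref{p:FBC_1}), the preimage $h^{-1}(\gamma([0,t]))$ is compact, and $h^{-1}(\gamma(t))$ is a finite set $\{y_1,\dots,y_k\}$. I would choose pairwise disjoint local-homeomorphism patches $W_1,\dots,W_k$ around $y_1,\dots,y_k$, all mapping onto a common connected neighborhood $V$ of $\gamma(t)$. A compactness argument (any accumulation point of $\tgamma(s)$ as $s \to t^-$ must lie in $h^{-1}(\gamma(t))$) shows that for $s$ sufficiently close to $t$ one has $\tgamma(s) \in W_1 \cup \dots \cup W_k$; connectedness of $\tgamma((t-\delta,t))$ then selects a single index $i$, and the local inverse $(h|_{W_i})^{-1}$ provides the continuous extension with $\tgamma(t) = y_i$. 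Thus $T$ is nonempty, open, and closed in $[0,1]$, hence equals $[0,1]$, yielding the desired lift $\tgamma\:[0,1]\to Y$.
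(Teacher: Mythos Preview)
The paper does not actually prove this proposition; it is only recorded with a citation to \cite[Lemma~2.15]{DPTUZ21}, so there is no in-paper argument to compare against. Your open--closed argument on $[0,1]$ is the standard approach and is correct as written: the key point that $\deg(h;y)=1$ forces $h$ to be a local homeomorphism at $y$ follows directly from the definition of local degree together with the openness of $h$ (Proposition~\ref{p:FBC_1}), and your handling of the closedness step via properness, accumulation in the finite fiber $h^{-1}(\gamma(t))$, and connectedness of $\widetilde\gamma((t-\delta,t))$ is sound. One small remark: in the closedness step you can in fact arrange $h^{-1}(V)=W_1\cup\cdots\cup W_k$ exactly, since $\gamma(t)$ is a principal value and each $W_i$ contributes one preimage to every $z\in V$; this slightly streamlines the argument but is not needed given your compactness reasoning.
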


%Actually, such lifts are unique, which is well-known to experts. We include a proof for the convenience of the reader.
Actually, such a lift is unique. We skip the proof since it is standard.
\begin{lemma}\label{l:uniqueness of path lift}
    Let $h\:Y\rightarrow Z$ be a finite branched covering map, and $\gamma$ be a continuous path in $Z$ that is disjoint from the set of branch values $V_h$. Let $x=\gamma(0)$, and $\tx\in h^{-1}(x)$. Let $\tgamma_1$ and $\tgamma_2$ be two paths in $Y$ such that $h(\tgamma_i)=\gamma$ and $\tgamma_i(0)=\tx$, $i\in\{1,\,2\}$, then $\tgamma_1=\tgamma_2$.
\end{lemma}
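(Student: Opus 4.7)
The plan is a standard connectedness argument applied to the set
\begin{equation*}
T \= \{t \in [0,1] : \tgamma_1(t) = \tgamma_2(t)\}.
\end{equation*}
I would verify that $T$ is nonempty, closed, and open in $[0,1]$; connectedness of $[0,1]$ then forces $T = [0,1]$, which is exactly the desired conclusion $\tgamma_1 = \tgamma_2$.

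Nonemptiness is immediate from the hypothesis $\tgamma_1(0) = \tgamma_2(0) = \tx$. Closedness follows from the fact that $Y$ is Hausdorff (as required by Definition~\ref{d:FBC}): the diagonal of $Y \times Y$ is closed, and $T$ is its preimage under the continuous map $t \mapsto (\tgamma_1(t), \tgamma_2(t))$.

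The core step is openness. Fix $t_0 \in T$ and put $\ty \= \tgamma_1(t_0) = \tgamma_2(t_0)$. Since $h(\ty) = \gamma(t_0) \notin V_h$ by hypothesis, we have $\ty \notin B_h$ and hence $\deg(h;\ty) = 1$. I would then invoke condition~(ii) in Definition~\ref{d:FBC} to produce compact neighborhoods $\hU$ of $\ty$ and $\hV$ of $h(\ty)$ satisfying
\begin{equation*}
\sum_{y \in \hU,\, h(y) = z} \deg(h;y) = 1 \quad \text{for every } z \in \hV.
\end{equation*}
Because local degrees are positive integers, each $z \in \hV$ admits exactly one preimage in $\hU$, so $h$ is injective on $\hU \cap h^{-1}(\hV)$. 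Let $U$ and $V$ be open neighborhoods of $\ty$ and $h(\ty)$ contained in the interiors of $\hU$ and $\hV$ respectively. Continuity of $\tgamma_1$, $\tgamma_2$, and $\gamma$ supplies some $\epsilon > 0$ such that for each $t \in [0,1]$ with $|t - t_0| < \epsilon$ one has $\tgamma_i(t) \in U$ and $\gamma(t) \in V$; then both $\tgamma_1(t)$ and $\tgamma_2(t)$ lie in $U \cap h^{-1}(V) \subseteq \hU \cap h^{-1}(\hV)$ and map to the common point $\gamma(t)$ under $h$, so the injectivity above forces $\tgamma_1(t) = \tgamma_2(t)$. Thus a whole neighborhood of $t_0$ in $[0,1]$ is contained in $T$.

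The only mildly delicate point is extracting the local injectivity of $h$ near the non-branch point $\ty$ from Definition~\ref{d:FBC}(ii); this is the step I would spell out carefully, the key observation being that a sum of positive integers equal to $1$ forces a single preimage in $\hU$ with local degree exactly $1$. Everything else is routine point-set topology, and Proposition~\ref{p:FBC_1} is not even strictly needed here (though it would upgrade the injection to a local homeomorphism if one preferred that phrasing).
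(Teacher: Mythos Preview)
Your argument is correct and is precisely the standard connectedness proof one expects here. The paper itself omits the proof entirely (``We skip the proof since it is standard''), so there is no alternative approach to compare against; your write-up would serve perfectly well as the missing details.
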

%\begin{proof}
    %Consider $J\=\{t\in[0,1]: \tgamma_1(t)=\tgamma_2(t)\}$. Then $0\in J$. It suffices to prove that $J$ is an open and closed subset of $[0,1]$.
    
    %Fix an arbitrary $t_0\in J$, and denote $\tx_0\=\tgamma_1(t_0)=\tgamma_2(t_0)$. Since $\gamma$ is disjoint from $V_h$, we get $\deg(h;\tx_0)=1$. Hence, there exists a neighborhood $U$ of $\tx_0$ such that $h|_U\:U\rightarrow h(U)$ is a homeomorphism. Choose a small neighborhood $N_\epsilon(t_0)\=(t_0-\epsilon,t_0+\epsilon)\cap[0,1]$ of $t_0$ such that $\tgamma_i(N_\epsilon(t_0))\subseteq U$, $i\in\{1,\,2\}$. Then since $h(\tgamma_1(t))=h(\tgamma_2(t))=\gamma(t)$ for each $t\in N_\epsilon(t_0)$, and $h|_U$ is injective, we get that $\tgamma_1(t)=\tgamma_2(t)$ for each $t\in N_\epsilon(t_0)$. Since $t_0$ is arbitrary, we know that $J$ is open.

    %Fix an arbitrary sequence $\{t_n\}_{n\in\N}$ of points in $J$ such that $\lim_{n\to +\infty}t_n=t^*\in[0,1]$. Then $\tgamma_1(t_n)=\tgamma_2(t_n)$ for each $n\in\N$. Since $\tgamma_1$ and $\tgamma_2$ are continuous, we have $\tgamma_1(t^*)=\tgamma_2(t^*)$. Thus $t^*\in J$. Since the sequence $\{t_n\}_{n\in\N}$ is arbitrary, $J$ is closed. 
     
    %Since $J$ is open and closed, and $J\neq\emptyset$, we have $J=[0,1]$. Now the proof is complete.
%\end{proof}

For each sequence $\xi=(\xi_0,\,\xi_1,\,\dots)\in\Sigma$, we define a sequence $\{z_n(\xi)\}_{n\in\N_0}$ of points in $X$ inductively as follows. Put $z_0(\xi)\=w_{\xi_0}$ and $\gamma_0(\xi)\=\gamma_{\xi_0}$. For each $n\in\N$, suppose that $z_{n-1}(\xi)$ is defined, let $\gamma_n(\xi)$ be a curve which is the branch of $f^{-n}(\gamma_{\xi_n})$ such that one of its ends is $z_{n-1}(\xi)$. Such lifts exist by Proposition~\ref{p:path lift in DPTUZ21}, and are well-defined by Lemma~\ref{l:uniqueness of path lift}. Define $z_n(\xi)$ as the other end of $\gamma_n(\xi)$.

It can be checked that when $f\:(\fX_1,X)\rightarrow(\fX_0,X)$ and $\{\cU_n\}_{n\in\N_0}$ satisfy the Assumptions in Section~\ref{s:assumption} and the Additional Assumptions in Subsection~\ref{ss:LD addassumptions}, the conditions in \cite[Theorem~1.1, Proposition~2.16, and Lemma~4.2]{DPTUZ21} are satisfied. Thus, we obtain what follows.

By the proof of \cite[Proposition~2.16]{DPTUZ21}, we have the following proposition, which gives a semiconjucacy from the shift map to $f$.

\begin{prop}\label{p:semi conjugacy}
Suppose that $f\:(\fX_1,X)\rightarrow(\fX_0,X)$ and $\{\cU_n\}_{n\in\N_0}$ satisfy the Assumptions in Section~\ref{s:assumption} and the Additional Assumptions in Subsection~\ref{ss:LD addassumptions}.
For each $\xi\in\Sigma$, let $z_n(\xi)$ be the sequence in $X$ defined as above. Then the following statements hold:
\begin{enumerate}
    \smallskip
    \item[\textnormal{(i)}] The limit $\lim_{n\to+\infty} z_n(\xi)$ exists in $X$. Define $\pi(\xi')\=\lim_{n\to+\infty} z_n(\xi')$ for each $\xi' \in \Sigma$.
    \smallskip
    \item[\textnormal{(ii)}] $f\circ\pi=\pi\circ\sigma$.
    \smallskip
    \item[\textnormal{(iii)}] The map $\pi\:\Sigma\rightarrow X$ is surjective and H\"older continuous.
\end{enumerate}
\end{prop}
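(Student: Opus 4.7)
The plan is to prove (i) via a uniform exponential shrinkage of the lifted arcs $\gamma_n(\xi)$, deduce (ii) by induction using uniqueness of path lifts, and derive (iii) from the same shrinkage combined with the [{\bf Irred}] axiom. For the key estimate, I would fix once and for all a finite cover of each of the $d$ base paths $\gamma_1,\dots,\gamma_d$ by level-$0$ good open sets in $\cU_0$; this is possible by compactness of each $\gamma_i$. Given $\xi \in \Sigma$ and $n \in \N$, one partitions the parameter interval into finitely many subintervals on each of which $\gamma_{\xi_n}$ lies in a single $U^0 \in \cU_0$; by Proposition~\ref{p:path lift in DPTUZ21} and Lemma~\ref{l:uniqueness of path lift}, the lift $\gamma_n(\xi)$ on each such subinterval is a connected subset of a single preimage of $U^0$ in $\cU_n$. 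Proposition~\ref{p:comparability}~(i) (with the refinement $d_n = C'd_0\theta^n$) then yields $\diam \gamma_n(\xi) \leq M_0 C' d_0 \theta^n$ for a constant $M_0$ independent of $\xi$ and $n$. Since $z_{n-1}(\xi), z_n(\xi) \in \gamma_n(\xi)$, the series $\sum_n |z_n(\xi)-z_{n-1}(\xi)|$ converges geometrically, so $\{z_n(\xi)\}_{n\in\N_0}$ is Cauchy. Total invariance $f^{-1}(X)=X$ together with $f^n(z_n(\xi)) = w_{\xi_n} \in X$ forces $z_n(\xi) \in X$, and closedness of $X$ gives $\pi(\xi) \in X$; this proves (i).

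For (ii), I would show by induction on $n \geq 1$ that $f(z_n(\xi)) = z_{n-1}(\sigma\xi)$. The base case $f(z_1(\xi)) = w_{\xi_1} = z_0(\sigma\xi)$ is immediate. For the inductive step, both $f(\gamma_n(\xi))$ and $\gamma_{n-1}(\sigma\xi)$ are lifts of $\gamma_{\xi_n} = \gamma_{(\sigma\xi)_{n-1}}$ under $f^{n-1}$ that share the same initial point by the inductive hypothesis, so Lemma~\ref{l:uniqueness of path lift} forces them to coincide; in particular their endpoints agree. Passing to the limit in $n$ and using continuity of $f$ yields $f\circ\pi=\pi\circ\sigma$.

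For (iii), H\"older continuity follows directly from the diameter estimate: if $\xi,\eta \in \Sigma$ agree in their first $N$ symbols then induction together with uniqueness of lifts gives $z_n(\xi) = z_n(\eta)$ for all $n < N$, and summing the geometric tail gives $|\pi(\xi) - \pi(\eta)| \leq C\theta^N \leq C\rho(\xi,\eta)^{\log_2(1/\theta)}$, with exponent $\log_2(1/\theta) > 0$. For surjectivity, $\pi(\Sigma)$ is compact, and since $w \notin \post(f)$, tracing back the coding tree expresses every $y \in f^{-n}(w)$ as $z_{n-1}(\xi)$ for some $\xi \in \Sigma$ with an appropriately chosen prefix. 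By the [{\bf Irred}] axiom, $\bigcup_{n} f^{-n}(w)$ is dense in $X$; for any $x \in X$, picking $y_n \in f^{-n}(w)$ with $y_n \to x$ and corresponding $\xi^{(n)}$, the diameter estimate gives $|\pi(\xi^{(n)}) - y_n| \leq \tilde{C}\theta^n \to 0$, and compactness of $\Sigma$ combined with continuity of $\pi$ extracts a convergent subsequence $\xi^{(n_k)} \to \xi$ with $\pi(\xi) = x$. The main obstacle throughout is the uniform diameter estimate on $\gamma_n(\xi)$; once in hand, the remaining claims follow routinely from uniqueness of lifts and density of iterated preimages.
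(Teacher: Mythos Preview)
The paper does not give a self-contained proof here; it simply invokes \cite[Proposition~2.16]{DPTUZ21}. Your sketch follows the standard geometric-coding-tree strategy, and parts (ii) and (iii) are handled cleanly (your inductive proof of $f(z_n(\xi))=z_{n-1}(\sigma\xi)$ is exactly the paper's Lemma~\ref{l: z_m iterated}) once the diameter estimate $\diam\gamma_n(\xi)\lesssim\theta^n$ is available.

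The gap is in that estimate. You assert that ``by compactness of each $\gamma_i$'' one may cover the base arcs $\gamma_1,\dots,\gamma_d$ by finitely many sets from $\cU_0$. But $\cU_0$ is only a cover of the repellor $X$, not of $\fX_0$; the arcs $\gamma_i$ live in $\fX_0$ and, away from their endpoints $w,w_i\in X$, may leave $\bigcup\cU_0$ entirely. Compactness is irrelevant unless $\gamma_i\subseteq\bigcup\cU_0$, and nothing in the construction guarantees this. Without that inclusion, the pieces of the lift $\gamma_n(\xi)$ are not forced into elements of $\cU_n$, and Proposition~\ref{p:comparability} gives you nothing.

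A clean repair: by property~(5) in Subsection~\ref{sss:basic assump} there exists $N$ with $\fX_N\subseteq\bigcup\cU_0$. For $n>N$ one has $f^{n-N}(\gamma_n(\xi))=\gamma_N(\sigma^{n-N}\xi)\subseteq\fX_N\subseteq\bigcup\cU_0$ (this identity follows from uniqueness of lifts, just as in your part~(ii)). There are only finitely many curves $\gamma_N(\eta)$, one for each prefix $(\eta_0,\dots,\eta_N)\in\{1,\dots,d\}^{N+1}$, and \emph{now} compactness lets you cover each of them by a chain of at most $M_0$ sets from $\cU_0$. Pulling back under $f^{n-N}$ places each piece of $\gamma_n(\xi)$ in an element of $\cU_{n-N}$, whence $\diam\gamma_n(\xi)\leq M_0\,d_{n-N}\leq M_0 C'd_0\theta^{n-N}$. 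With this correction in place, your arguments for (i)--(iii) go through.
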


By \cite[Lemma~4.2]{DPTUZ21}, we have the following proposition.

\begin{prop}[Das, Przytycki, Tiozzo, Urba\'nski, and Zdunik \cite{DPTUZ21}]  \label{p:top pressure}
    Suppose that $f\:(\fX_1,X)\rightarrow(\fX_0,X)$ and $\{\cU_n\}_{n\in\N_0}$ satisfy the Assumptions in Section~\ref{s:assumption} and the Additional Assumptions in Subsection~\ref{ss:LD addassumptions}. Denote $f_X=f|_X$.
    Let $\phi\: X \rightarrow \R$ be a H\"older continuous function, and $\pi$ be the semiconjugacy given in Proposition~\ref{p:semi conjugacy}. Then 
    \begin{equation*}
    	P(f_X,\phi)=P(\sigma,\phi\circ\pi).
    \end{equation*}
\end{prop}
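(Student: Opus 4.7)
The plan is to prove the identity $P(f_X,\phi)=P(\sigma,\phi\circ\pi)$ by establishing the two inequalities separately. Observe first that $\phi\circ\pi$ is H\"older continuous on $\Sigma$, since $\pi$ is H\"older by Proposition~\ref{p:semi conjugacy}(iii) and compositions of H\"older maps are H\"older; in particular $P(\sigma,\phi\circ\pi)$ is well-defined and finite.

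For the easy direction $P(\sigma,\phi\circ\pi)\geq P(f_X,\phi)$, I would lift separated sets. Fix $C>0$ and $\alpha\in(0,1]$ such that $d(\pi(\xi),\pi(\eta))\leq C\rho(\xi,\eta)^\alpha$ for all $\xi,\eta\in\Sigma$. Given an $(n,\epsilon)$-separated set $E\subseteq X$ for $f_X$, use the surjectivity of $\pi$ (Proposition~\ref{p:semi conjugacy}(iii)) to pick one $\xi_x\in\pi^{-1}(x)$ for each $x\in E$. The semiconjugacy $f\circ\pi=\pi\circ\sigma$ gives $f^k(x)=\pi(\sigma^k\xi_x)$, whence $C\rho(\sigma^k\xi_x,\sigma^k\xi_y)^\alpha\geq d(f^k(x),f^k(y))$ for each $k\in\{0,\dots,n-1\}$; consequently the lifted set $\{\xi_x\}_{x\in E}$ is $(n,\delta)$-separated in $\Sigma$ with $\delta\=(\epsilon/C)^{1/\alpha}$, and $S_n(\phi\circ\pi)(\xi_x)=S_n\phi(x)$. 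Summing $\exp(S_n(\phi\circ\pi)(\xi_x))$ over $x\in E$ matches the partition function on $X$, and passing to the standard limits in the separated-set definition of pressure yields the inequality.

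The hard direction $P(\sigma,\phi\circ\pi)\leq P(f_X,\phi)$ I would prove via the variational principle. For each $\nu\in\cM(\Sigma,\sigma)$, set $\mu\=\pi_*\nu\in\cM(X,f_X)$; then $\int(\phi\circ\pi)\,\mathrm{d}\nu=\int\phi\,\mathrm{d}\mu$, so the bound reduces to showing $h_\nu(\sigma)\leq h_\mu(f_X)$ for every such $\nu$. Since the general factor inequality yields $h_\nu(\sigma)\geq h_\mu(f_X)$, the task is really to establish equality. I would apply a Bowen--Ledrappier--Walters type factor-entropy decomposition expressing $h_\nu(\sigma)-h_\mu(f_X)$ as a conditional (fibre) entropy over the factor map $\pi$; this term vanishes as soon as the fibres of $\pi$ carry zero Bowen topological entropy, which in particular holds when they are finite of uniformly bounded cardinality.

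The main obstacle is therefore the uniform finiteness of the fibres of $\pi$, and this is where the Additional Assumption that $B_f$ is finite is crucial. By the uniqueness of path lifts (Lemma~\ref{l:uniqueness of path lift}) combined with the inductive structure of the coding tree in Proposition~\ref{p:semi conjugacy}, two distinct sequences $\xi,\eta\in\Sigma$ with $\pi(\xi)=\pi(\eta)$ must arise from lifts whose paths separate and reconverge through branch values at every sufficiently deep level; the finiteness of $B_f$, together with axiom [\textbf{Expans}] and the contraction estimates of Proposition~\ref{p:comparability}, then bounds uniformly the number of admissible codings of any single point of $X$. Feeding this uniform bound on $\card(\pi^{-1}(x))$ into the Rokhlin--Ledrappier--Walters formula produces $h_\nu(\sigma)=h_\mu(f_X)$, which closes the argument.
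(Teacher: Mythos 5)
First, a point of comparison: the paper does not actually prove this proposition --- it is imported verbatim from \cite[Lemma~4.2]{DPTUZ21} after checking that the Assumptions and Additional Assumptions imply the hypotheses of that lemma. So your argument is necessarily a different route, and the question is whether it is complete. Your first inequality, $P(f_X,\phi)\leq P(\sigma,\phi\circ\pi)$ via lifting $(n,\epsilon)$-separated sets through the surjective H\"older semiconjugacy, is correct (it is the standard factor inequality for pressure). The reduction of the reverse inequality, via the variational principle and a Bowen/Ledrappier--Walters bound, to the vanishing of the Bowen entropy of the fibres $\pi^{-1}(x)$ is also a legitimate strategy.

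The gap is in the one step that carries the real content: the claim that the fibres of $\pi$ are (uniformly) finite. You do not prove this; you assert that two distinct codings of a point ``must arise from lifts whose paths separate and reconverge through branch values,'' so that finiteness of $B_f$ bounds the fibre cardinality. That mechanism is false: coding ambiguity has nothing to do with branch points. Already for $z\mapsto z^2$ acting on the unit circle (a metric CXC system with $B_f\cap X=\emptyset$ satisfying all the Additional Assumptions), the geometric coding tree is essentially binary expansion, and every point in the backward orbit of the base point admits two distinct codings --- the two lifted path systems simply converge to the same limit from opposite sides, with no branch value in sight. In the present setting the finiteness of $B_f$ is used only to make $\post(f)$ countable so that, by strong path-connectedness, the paths $\gamma_i$ can be chosen to avoid it and the coding tree exists at all; it gives no control on $\card\bigl(\pi^{-1}(x)\bigr)$. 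Nor do the paper's own lemmas close this hole: Lemma~\ref{l:a bijection between sequence and preimages} shows only that $\card\bigl(\pi^{-1}(y)\bigr)=\card\bigl(\pi^{-1}(x_0)\bigr)$ for $y\in f^{-n}(x_0)$ with $x_0\notin\post(f)$, and gives no bound on $\card\bigl(\pi^{-1}(x_0)\bigr)$ itself, let alone one valid $\nu$-almost everywhere for every $\sigma$-invariant $\nu$, which is what the Ledrappier--Walters formula requires. Establishing that the fibres are finite --- or at least of zero Bowen entropy --- is precisely the nontrivial input that \cite{DPTUZ21} supplies, and without it your second inequality remains unproved.
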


The following proposition follows from \cite[Theorem~1.1]{DPTUZ21}.

\begin{prop}[Das, Przytycki, Tiozzo, Urba\'nski, and Zdunik \cite{DPTUZ21}] \label{p:Uniqueness equilibrium state}
    Suppose that $f\:(\fX_1,X)\rightarrow(\fX_0,X)$ and $\{\cU_n\}_{n\in\N_0}$ satisfy the Assumptions in Section~\ref{s:assumption} and the Additional Assumptions in Subsection~\ref{ss:LD addassumptions}.
    Let $\phi$ be a real-valued H\"older continuous function on $X$. Then there exists a unique equilibrium state $\mu_\phi$ for the map $f|_X$ and potential $\phi$.
\end{prop}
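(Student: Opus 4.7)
The plan is to transfer the question from the geometric system $(X, f_X)$ to the one-sided full shift $(\Sigma, \sigma)$ on $d$ symbols via the semiconjugacy $\pi\:\Sigma\to X$ from Proposition~\ref{p:semi conjugacy}, where classical thermodynamic formalism is available. Since $\pi$ is H\"older continuous, the pulled-back potential $\psi \= \phi\circ\pi$ is H\"older continuous on $\Sigma$, and the Ruelle--Perron--Frobenius theorem for the full shift with H\"older potentials yields a unique equilibrium state $\tmu \in \cM(\Sigma,\sigma)$ for $(\sigma,\psi)$, satisfying $h_{\tmu}(\sigma) + \int\!\psi \diff\tmu = P(\sigma, \psi) = P(f_X, \phi)$, where the last equality is Proposition~\ref{p:top pressure}.

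For existence, I would define $\mu_\phi \= \pi_*\tmu$. The intertwining $f\circ\pi = \pi\circ\sigma$ makes $\mu_\phi$ an $f_X$-invariant Borel probability measure on $X$, and $\int\!\phi \diff\mu_\phi = \int\!\psi \diff\tmu$. The verification that $\mu_\phi$ is an equilibrium state reduces to showing $h_{\mu_\phi}(f_X) = h_{\tmu}(\sigma)$: the inequality $\leq$ is automatic for factor maps between compact topological dynamical systems, while $\geq$ follows once we know $\pi$ is essentially injective with respect to $\tmu$, that is, $\tmu(\cR) = 0$ where $\cR \= \{\xi\in\Sigma : \card(\pi^{-1}(\pi(\xi))) > 1\}$.

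For uniqueness, take any equilibrium state $\nu\in\cM(X,f_X)$ for $(f_X,\phi)$. A standard weak$^*$-compactness argument on the nonempty convex set $\{\mu\in\cM(\Sigma,\sigma) : \pi_*\mu = \nu\}$ produces a $\sigma$-invariant lift $\tnu$. The Abramov--Rohlin entropy addition formula gives $h_{\tnu}(\sigma) = h_\nu(f_X) + h_{\tnu}(\sigma \mid \pi^{-1}(\cB_X))$, where $\cB_X$ denotes the Borel $\sigma$-algebra on $X$. Once the conditional term is shown to vanish, we obtain $P_{\tnu}(\sigma,\psi) = P_\nu(f_X,\phi) = P(f_X,\phi) = P(\sigma,\psi)$, so $\tnu$ is an equilibrium state for $(\sigma,\psi)$; uniqueness on $\Sigma$ forces $\tnu = \tmu$, and hence $\nu = \pi_*\tnu = \mu_\phi$.

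The main obstacle is the fiber analysis showing $\tnu(\cR) = 0$ (and likewise for $\tmu$). The Additional Assumption that $B_f$ is finite is essential here: $\pi$ is injective away from sequences whose $\sigma$-orbit eventually encodes coordinates above the finite set $\post(f)\cup B_f$, so $\cR$ is contained in a countable union of preperiodic $\sigma$-orbits. Ruling out that an equilibrium state for a H\"older continuous potential can concentrate on such a countable set, combined with the Gibbs-type regularity of $\tmu$ provided by the Ruelle--Perron--Frobenius machinery, is the technical heart of \cite[Theorem~1.1]{DPTUZ21}. Once $\tnu(\cR)=0$ is established, the conditional entropy term vanishes and both existence and uniqueness follow at once.
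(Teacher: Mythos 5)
The paper does not actually prove this proposition: it checks that the hypotheses of \cite[Theorem~1.1]{DPTUZ21} are met under the stated assumptions and then quotes that theorem verbatim. Your proposal reconstructs the argument behind the citation, and the overall route --- pass to the full shift via the coding map $\pi$ of Proposition~\ref{p:semi conjugacy}, use Ruelle--Perron--Frobenius theory for the H\"older potential $\phi\circ\pi$, and exploit $P(f_X,\phi)=P(\sigma,\phi\circ\pi)$ from Proposition~\ref{p:top pressure} --- is the right one, so there is no gap in the sense of a wrong approach. Two points are worth flagging, though. First, your uniqueness step is harder than necessary: the conditional entropy term in the Abramov--Rokhlin formula is non-negative, so for \emph{any} invariant lift $\tnu$ of an equilibrium state $\nu$ one gets $P_{\tnu}(\sigma,\phi\circ\pi)\geq h_{\nu}(f_X)+\int\phi\diff\nu=P(f_X,\phi)=P(\sigma,\phi\circ\pi)$ for free; the variational principle on $\Sigma$ then forces equality, so $\tnu$ is the unique shift equilibrium state $\tmu$ and $\nu=\pi_*\tmu$ --- the vanishing of the conditional term is a consequence, not a prerequisite. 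Second, since existence is already Theorem~\ref{t:Intro existence equilibrium} of this paper (via asymptotic $h$-expansiveness), combining it with this lifting argument yields both existence and uniqueness, and identifies $\mu_\phi=\pi_*\tmu$, without any fiber analysis at all; the essential injectivity of $\pi$ is only needed if one wants to prove existence directly by pushing $\tmu$ forward. Relatedly, your assertion that the non-injectivity locus $\cR$ is contained in a countable union of preperiodic $\sigma$-orbits is not justified as stated (and is not obviously true); since you defer that step to \cite[Theorem~1.1]{DPTUZ21} anyway, this is an imprecision rather than a fatal error, but in a self-contained write-up it should either be proved or avoided via the shortcut above.
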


Then we establish some technical lemmas.

\begin{lemma}\label{l: z_m iterated}
    Suppose that $f\:(\fX_1,X)\rightarrow(\fX_0,X)$ and $\{\cU_n\}_{n\in\N_0}$ satisfy the Assumptions in Section~\ref{s:assumption} and the Additional Assumptions in Subsection~\ref{ss:LD addassumptions}. 
    Let $\xi=(\xi_0,\,\xi_1,\,\cdots)$ be in $\Sigma$. Then for each pair of $m,\,k\in\N$ with $k\leq m$, we have $f^k(z_m(\xi))=z_{m-k} \bigl( \sigma^k(\xi) \bigr)$. 
\end{lemma}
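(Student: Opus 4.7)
The strategy is to first establish the case $k=1$, i.e., the identity $f(z_m(\xi)) = z_{m-1}(\sigma(\xi))$ for every $m \in \N$, and then iterate to obtain the general statement. The case $k=1$ will itself be proved by induction on $m$, and the key tool is the uniqueness of path lifting (Lemma~\ref{l:uniqueness of path lift}) applied to the lifting construction used to define the points $z_n(\xi)$ and paths $\gamma_n(\xi)$.

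For the base case $m=1$, one checks directly from the construction that $\gamma_1(\xi)$ is a branch of $f^{-1}(\gamma_{\xi_1})$ with one endpoint $z_0(\xi) = w_{\xi_0}$ and the other endpoint $z_1(\xi)$ satisfying $f(z_1(\xi)) = w_{\xi_1}$. On the other hand, $z_0(\sigma(\xi)) = w_{(\sigma(\xi))_0} = w_{\xi_1}$, so the identity holds for $m=1$. For the inductive step, suppose $f(z_{m-1}(\xi)) = z_{m-2}(\sigma(\xi))$ for some $m \geq 2$. By definition, $\gamma_m(\xi)$ is a lift of $\gamma_{\xi_m}$ under $f^m$ starting at $z_{m-1}(\xi)$, so $f \circ \gamma_m(\xi)$ is a lift of $\gamma_{\xi_m}$ under $f^{m-1}$ starting at $f(z_{m-1}(\xi)) = z_{m-2}(\sigma(\xi))$. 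Meanwhile, $\gamma_{m-1}(\sigma(\xi))$ is, by its definition, a lift of $\gamma_{(\sigma(\xi))_{m-1}} = \gamma_{\xi_m}$ under $f^{m-1}$ starting at $z_{m-2}(\sigma(\xi))$. Since $\gamma_{\xi_m}$ avoids $\post(f) \supseteq V_{f^{m-1}} \cap X$, the uniqueness of path lifting (applied to the finite branched covering $f^{m-1}$, whose FBC property follows from composition of FBC maps) forces $f \circ \gamma_m(\xi) = \gamma_{m-1}(\sigma(\xi))$. Taking the terminal endpoints of both paths yields $f(z_m(\xi)) = z_{m-1}(\sigma(\xi))$, completing the induction on $m$.

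The general identity now follows by induction on $k$. The case $k=1$ has just been established. Assuming the result for some $k-1 \geq 1$ and all $m \geq k-1$, and given $m \geq k$, we compute
\begin{equation*}
f^k(z_m(\xi)) = f^{k-1}\bigl(f(z_m(\xi))\bigr) = f^{k-1}\bigl(z_{m-1}(\sigma(\xi))\bigr) = z_{(m-1)-(k-1)}\bigl(\sigma^{k-1}(\sigma(\xi))\bigr) = z_{m-k}\bigl(\sigma^k(\xi)\bigr),
\end{equation*}
which finishes the proof.

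The only nontrivial point is the inductive step for $k=1$: one must carefully match the two path-lifting constructions (the definitional lift of $\gamma_m(\xi)$ and the descended path $f \circ \gamma_m(\xi)$) and invoke uniqueness. This is routine once one observes that the paths $\gamma_i$ were chosen to avoid $\post(f)$, which contains all branch values of every iterate, so that Proposition~\ref{p:path lift in DPTUZ21} and Lemma~\ref{l:uniqueness of path lift} apply to lifts under any power of $f$.
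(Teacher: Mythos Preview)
Your proof is correct and follows essentially the same approach as the paper's: first establish the case $k=1$ by induction on $m$ using uniqueness of path lifting to identify $f\circ\gamma_m(\xi)$ with $\gamma_{m-1}(\sigma(\xi))$, then iterate to obtain the general statement. Your write-up is slightly more explicit than the paper's in spelling out the induction on $k$ and in invoking Lemma~\ref{l:uniqueness of path lift} by name, but the argument is the same.
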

\begin{proof}
    We prove $f(z_m(\xi))=z_{m-1}(\sigma(\xi))$ for all $m\in\N$ by induction.
    
    First, consider the case where $m=1$. Then $\gamma_1(\xi)$ is the branch of $f^{-1}(\gamma_{\xi_1})$ such that one of its ends is $z_0(\xi)=w_{\xi_0}$. Then 
    \begin{equation*}
        f(z_1(\xi))=f(\gamma_1(\xi)(1))=\gamma_{\xi_1}(1)=w_{\xi_1}= z_0(\sigma(\xi)).
    \end{equation*}

    For an integer $m>2$, we make the induction hypothesis that $f(z_{m-1}(\xi))=z_{m-2}(\sigma(\xi))$. Since $\gamma_m(\xi)$ is the branch of $f^{-m}(\gamma_{\xi_m})$ such that one of its end is $z_{m-1}(\xi)$, we know that $f(\gamma_m(\xi))$ is the branch of $f^{-(m-1)}(\gamma_{\xi_m})$ such that one of its ends is $f(z_{m-1}(\xi))=z_{m-2}(\sigma(\xi))$. Thus $f(\gamma_m(\xi))$ is identical to $\gamma_{m-1}(\sigma(\xi))$, and $f(z_m(\xi))$ is identical to $z_{m-1}(\sigma(\xi))$.

    By induction, $f(z_m(\xi))=z_{m-1}(\sigma(\xi))$ holds for all $m\in\N$. Then it follows immediately that $f^k(z_m(\xi))=z_{m-k} \bigl( \sigma^k(\xi) \bigr)$ holds for each pair of $m,\,k\in\N$ with $k\leq m$, and the proof is complete.
\end{proof}

\begin{lemma}\label{l: Zm(a) neq Zm(b)}
    Suppose that $f\:(\fX_1,X)\rightarrow(\fX_0,X)$ and $\{\cU_n\}_{n\in\N_0}$ satisfy the Assumptions in Section~\ref{s:assumption} and the Additional Assumptions in Subsection~\ref{ss:LD addassumptions}.
    Let $\xi=(\xi_0,\,\xi_1,\,\cdots)$ and $\eta=(\eta_0,\,\eta_1,\,\cdots)$ be in $\Sigma$. If $\xi_n\neq\eta_n$ for some $n\in\N_0$, then $z_m(\xi)\neq z_m(\eta)$ for each $m\in\N$ with $m\geq n$.
\end{lemma}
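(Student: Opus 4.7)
My plan is a two-step strategy: first reduce to the case $n = 0$ via Lemma~\ref{l: z_m iterated}, then prove the reduced claim by induction on $m$. The key geometric input is the upgrade of Lemma~\ref{l:uniqueness of path lift} to a disjointness statement for distinct lifts of the same path.

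For the reduction, I would argue by contradiction: assume $z_m(\xi) = z_m(\eta)$ for some $m \geq n$, with $\xi_n \neq \eta_n$. If $n \geq 1$, applying $f^n$ and Lemma~\ref{l: z_m iterated} (with $k = n$) yields
\begin{equation*}
    z_{m-n}(\sigma^n(\xi)) = f^n(z_m(\xi)) = f^n(z_m(\eta)) = z_{m-n}(\sigma^n(\eta)),
\end{equation*}
and since $(\sigma^n(\xi))_0 = \xi_n \neq \eta_n = (\sigma^n(\eta))_0$, it is enough to establish the reduced claim: whenever $\alpha, \beta \in \Sigma$ satisfy $\alpha_0 \neq \beta_0$, we have $z_m(\alpha) \neq z_m(\beta)$ for every $m \in \N_0$.

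The reduced claim I would prove by induction on $m$. The base case $m = 0$ is immediate, as $z_0(\alpha) = w_{\alpha_0} \neq w_{\beta_0} = z_0(\beta)$ (the $d$ preimages of $w$ are distinct because $w \notin \post(f)$ forces $\card(f^{-1}(w)) = d$). For the inductive step from $m-1$ to $m \geq 1$, assume $z_{m-1}(\alpha) \neq z_{m-1}(\beta)$. If $\alpha_m \neq \beta_m$, I would apply $f^m$ and Lemma~\ref{l: z_m iterated} to obtain $f^m(z_m(\alpha)) = w_{\alpha_m} \neq w_{\beta_m} = f^m(z_m(\beta))$, whence $z_m(\alpha) \neq z_m(\beta)$. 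If instead $\alpha_m = \beta_m$, then $\gamma_m(\alpha)$ and $\gamma_m(\beta)$ are two lifts under $f^m$ of the common path $\gamma_{\alpha_m}$, starting at the distinct points $z_{m-1}(\alpha)$ and $z_{m-1}(\beta)$; their endpoints $z_m(\alpha), z_m(\beta)$ must therefore differ.

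The main obstacle is justifying this last sentence: two lifts of the same path with distinct starting points meet at no parameter value. The standard way is to suppose they coincide at some $t_0 \in [0,1]$, then apply Lemma~\ref{l:uniqueness of path lift} on the subinterval $[t_0, 1]$ to force agreement there, and separately apply it to the time-reversal of the restriction on $[0, t_0]$ (which still avoids the branch values, since reversal preserves the image) to force agreement on $[0, t_0]$; the resulting global coincidence would contradict $z_{m-1}(\alpha) \neq z_{m-1}(\beta)$. This disjointness upgrade is the only non-combinatorial ingredient of the proof, while the reduction and induction are purely formal consequences of Lemma~\ref{l: z_m iterated}.
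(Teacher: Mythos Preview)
Your proof is correct, and the key geometric input---uniqueness of path lifts---is the same as the paper's. The paper's execution is more economical, however: it inducts directly from $m=n$ (no reduction to $n=0$ via $\sigma^n$), and in the inductive step it argues by contradiction, which collapses your case distinction. Namely, assuming $z_m(\xi)=z_m(\eta)$ forces $w_{\xi_m}=f^m(z_m(\xi))=f^m(z_m(\eta))=w_{\eta_m}$, so $\xi_m=\eta_m$ automatically; then $\gamma_m(\xi)$ and $\gamma_m(\eta)$ are lifts of the same $\gamma_{\xi_m}$ sharing the \emph{terminal} point, and Lemma~\ref{l:uniqueness of path lift} (applied with that endpoint as basepoint) gives $\gamma_m(\xi)=\gamma_m(\eta)$, hence $z_{m-1}(\xi)=z_{m-1}(\eta)$, contradicting the inductive hypothesis. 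Your disjointness upgrade is valid but stronger than what you actually use: you only need that distinct initial points imply distinct terminal points, which is exactly the contrapositive of uniqueness with time reversed---no splitting at an interior $t_0$ is required. The reduction via Lemma~\ref{l: z_m iterated} is a clean modular move, but it buys nothing over starting the induction at $m=n$.
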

\begin{proof}
    Assume that $\xi_n\neq\eta_n$ for some $n\in\N_0$. We prove that $z_m(\xi)\neq z_m(\eta)$ for all $m\geq n$ by induction.
    
    First, consider the case where $m=n$. Since $f^n(z_n(\xi))=w_{\xi_n}$, $f^n(z_n(\eta))=w_{\eta_n}$ and $\xi_n\neq\eta_n$, it is clear that $z_n(\xi)\neq z_n(\eta)$.

    For an integer $m>n$, we make the induction hypothesis that $z_{m-1}(\xi)\neq z_{m-1}(\eta)$. Suppose $z_m(\xi)=z_m(\eta)$. Then $w_{i_m}=f^m(z_m(\xi))=f^m(z_m(\eta))$ for some $i_m \in\{1,\,2,\,\dots,\,d\}$. Then $\gamma_m(\xi)$ and $\gamma_m(\eta)$ are two paths such that $f^m(\gamma_m(\xi))=f^m(\gamma_m(\eta))=\gamma_{i_m}$ and $\gamma_m(\xi)(1)=z_m(\xi)=z_m(\eta)=\gamma_m(\eta)(1)$. By Lemma~\ref{l:uniqueness of path lift}, $\gamma_m(\xi)=\gamma_m(\eta)$. Thus $z_{m-1}(\xi)=z_{m-1}(\eta)$, which contradicts to our induction hypothesis. So $z_m(\xi)\neq z_m(\eta)$.

    By induction, $z_m(\xi)\neq z_m(\eta)$ holds for all $m\geq n$.
\end{proof}

\begin{lemma}\label{l:a bijection between sequence and preimages}
    Suppose that $f\:(\fX_1,X)\rightarrow(\fX_0,X)$ and $\{\cU_n\}_{n\in\N_0}$ satisfy the Assumptions in Section~\ref{s:assumption} and the Additional Assumptions in Subsection~\ref{ss:LD addassumptions}. Let $\pi$ be the semiconjugacy given in Proposition~\ref{p:semi conjugacy}.  
    Then for all $x_0\in X\setminus \post(f)$, $\omega_0\in\Sigma$, and $n\in\N$ with $x_0=\pi(\omega_0)$,  the map $\pi|_{\sigma^{-n}(\omega_0)}\:\sigma^{-n}(\omega_0)\rightarrow f^{-n}(x_0)$ is bijective.
\end{lemma}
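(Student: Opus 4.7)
The plan is to reduce the bijection to an injectivity statement plus a cardinality count. Well-definedness of $\pi|_{\sigma^{-n}(\omega_0)}\: \sigma^{-n}(\omega_0)\to f^{-n}(x_0)$ is immediate from the semiconjugacy $f\circ\pi = \pi\circ\sigma$ of Proposition~\ref{p:semi conjugacy}. For cardinalities, $\sigma$ is $d$-to-one so $\card(\sigma^{-n}(\omega_0)) = d^n$, while $x_0\in X\setminus\post(f)$ means $x_0\notin V_{f^k}$ for every $k\in\N$ (since $\post(f) = X\cap\bigcup_k V_{f^k}$ and $x_0\in X$); combining with Proposition~\ref{p:same loc deg} and total invariance of $X$ then gives $\card(f^{-n}(x_0)) = \deg(f^n) = d^n$. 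With matching finite cardinalities, bijectivity reduces to injectivity.

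Before tackling injectivity I would establish a preliminary observation: every preimage of a non-post-branch point is itself non-post-branch. Indeed, if $y\in f^{-k}(x_0)$ lay in $V_{f^m}$, writing $y = f^m(b)$ with $b\in B_{f^m}$, multiplicativity of local degrees yields $\deg(f^{k+m};b)\geq\deg(f^m;b)>1$, so $b\in B_{f^{k+m}}$ and $x_0 = f^{k+m}(b)\in V_{f^{k+m}}\cap X\subseteq\post(f)$, contradicting the hypothesis. Consequently every point of $f^{-n}(x_0)$ avoids $\post(f)$ and hence $V_f$.

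The main technical step is the base case $n=1$: for any $\omega\in\Sigma$ with $\pi(\omega)\notin\post(f)$, I claim $\pi|_{\sigma^{-1}(\omega)}$ is injective. Given distinct $\xi,\eta\in\sigma^{-1}(\omega)$, I would argue by contradiction assuming $\pi(\xi) = \pi(\eta) = y'$. Since $\pi(\omega)\notin V_f$, Proposition~\ref{p:FBC_2}(ii) supplies a connected open neighborhood $V\subseteq\fX_0\setminus V_f$ of $\pi(\omega)$ whose preimage $f^{-1}(V)$ splits as a disjoint union of connected open neighborhoods $U_1,\dots,U_d$ of the $d$ preimages of $\pi(\omega)$, with each $f|_{U_i}\: U_i\to V$ a homeomorphism. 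Let $U_i$ be the component containing $y'$. For all sufficiently large $m$, convergence $z_m(\xi), z_m(\eta)\to y'$ places both in $U_i$, while Lemma~\ref{l: z_m iterated} gives $f(z_m(\xi)) = z_{m-1}(\omega) = f(z_m(\eta))$. Injectivity of $f|_{U_i}$ then forces $z_m(\xi) = z_m(\eta)$ for arbitrarily large $m$, contradicting Lemma~\ref{l: Zm(a) neq Zm(b)} applied at the index $0$ where $\xi$ and $\eta$ disagree.

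The induction on $n$ is then routine: for distinct $\xi,\eta\in\sigma^{-(n+1)}(\omega_0)$, either $\sigma\xi = \sigma\eta$, placing both in $\sigma^{-1}(\sigma\xi)$ above $\pi(\sigma\xi)\in f^{-n}(x_0)\subseteq X\setminus\post(f)$ and invoking the base case; or $\sigma\xi\neq\sigma\eta$, in which case the induction hypothesis applied to $\sigma^{-n}(\omega_0)$ yields $\pi(\sigma\xi)\neq\pi(\sigma\eta)$, and hence $\pi(\xi)\neq\pi(\eta)$ after applying $f$ to both. The principal obstacle is the base case: Lemma~\ref{l: Zm(a) neq Zm(b)} only ensures $z_m(\xi)\neq z_m(\eta)$ at each finite stage, a priori allowing their limits to coincide. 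The idea that resolves this is to combine the local-homeomorphism structure above non-branch values (Proposition~\ref{p:FBC_2}(ii)) with the one-step collapse $f(z_m(\cdot)) = z_{m-1}(\omega)$ from Lemma~\ref{l: z_m iterated}, which manufactures a contradiction at each sufficiently large finite stage rather than only in the limit.
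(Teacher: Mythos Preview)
Your proof is correct, and the core mechanism (distinct coding-tree points $z_m(\xi)\neq z_m(\eta)$ with coinciding images under an iterate of $f$, both converging to a putative common limit) matches the paper's. The organization differs: the paper handles arbitrary $n$ in a single step rather than inducting. Given distinct $\xi,\eta\in\sigma^{-n}(\omega_0)$ with $\pi(\xi)=\pi(\eta)=x$, the paper notes that for all $m\geq n$ one has $z_m(\xi)\neq z_m(\eta)$ (Lemma~\ref{l: Zm(a) neq Zm(b)}) while $f^n(z_m(\xi))=z_{m-n}(\omega_0)=f^n(z_m(\eta))$ (Lemma~\ref{l: z_m iterated}), with both sequences converging to $x$; plugging this directly into the definition of local degree gives $\deg(f^n;x)\geq 2$, hence $x_0=f^n(x)\in V_{f^n}\cap X\subseteq\post(f)$, a contradiction. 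This bypasses both your induction and your preliminary observation that preimages of non-post-branch points are non-post-branch. Your route via the explicit local-homeomorphism decomposition of Proposition~\ref{p:FBC_2}(ii) is more hands-on, but the paper's shortcut of invoking $\deg(f^n;\cdot)$ rather than $\deg(f;\cdot)$ compresses everything into one paragraph.
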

\begin{proof}
Fix arbitrary $x_0\in X\setminus \post(f)$, $\omega_0\in\Sigma$, $n\in\N$, and $\xi,\eta\in \sigma^{-n}(\omega_0)$ with $\xi\neq\eta$ and $x_0=\pi(\omega_0)$ (see Proposition~\ref{p:semi conjugacy}). By Lemma~\ref{l: z_m iterated}, for each integer $m\geq n$, it holds that $f^n(z_m(\xi))=z_{m-n}(\omega_0)=f^n(z_m(\eta))$. On the other hand, since $\xi\neq\eta$, there exists $k\in\{0,1,\cdots,n-1\}$ such that $\xi_k\neq\eta_k$. Then by Lemma~\ref{l: Zm(a) neq Zm(b)}, we know that $z_m(\xi)\neq z_m(\eta)$ holds for each integer $m\geq n$.

If $\pi(\xi)=\pi(\eta)=x\in X$, then the following statements hold:
\begin{enumerate}
    \smallskip
    \item[\textnormal{(1)}] $z_m(\xi)\neq z_m(\eta)$ for each $m\geq n$;
    \smallskip
    \item[\textnormal{(2)}] $\lim\limits_{m\to+\infty}z_m(\xi)=\lim\limits_{m\to+\infty}z_m(\eta)=x$;
    \smallskip
    \item[\textnormal{(3)}] $f^n(z_m(\xi))=z_{m-n}(\omega_0)=f^n(z_m(\eta))$;
    \smallskip
    \item[\textnormal{(4)}] $\lim\limits_{m\to+\infty}z_{m-n}(\omega_0)=x_0=f^n(x)$.
\end{enumerate}
Thus we know that $\deg(f^n;x)\geq 2$, which contradicts to $x_0\in {X\setminus \post(f)}$. Since $\xi,\,\eta$ are arbitrary, the map $\pi|_{\sigma^{-n}(\omega_0)}\:\sigma^{-n}(\omega_0)\rightarrow f^{-n}(x_0)$ is injective. Since $x_0\in X\setminus \post(f)$, we have $\card(f^{-n}(x_0))=d^n=\card(\sigma^{-n}(\omega_0))$, and we get that the map $\pi|_{\sigma^{-n}(\omega_0)}$ is bijective.
\end{proof}

The following two lemmas are standard for the full shifts; see for example, \cite{PU10}.

\begin{lemma}\label{l:pressure of shift} 
    Let $\Sigma$ be the space of infinite one-sided sequences of $d$ symbols, and $\sigma\:\Sigma\rightarrow\Sigma$ the shift map.
    Let $\phi$ be a real-valued H\"older continuous function on $\Sigma$. Then for each $\omega_0\in\Sigma$, we have
    \begin{equation*}
    	P(\sigma,\phi)=\lim_{n\to+\infty}
    	\frac{1}{n}\log\sum\limits_{\omega\in \sigma^{-n}(\omega_0)}\exp (S_n\phi(\omega)).
    \end{equation*}
\end{lemma}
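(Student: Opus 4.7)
The plan is to prove this standard fact by directly comparing the partition function $Z_n\=\sum_{\omega\in\sigma^{-n}(\omega_0)}\exp(S_n\phi(\omega))$ with the partition functions defining $P(\sigma,\phi)$ over $(n,\epsilon)$-separated sets, exploiting the bounded distortion afforded by H\"older continuity on the full shift.

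First, I would verify that $\sigma^{-n}(\omega_0)$ is itself $(n,\epsilon)$-separated for every $\epsilon\in(0,1)$. Two distinct preimages agree from position $n$ onward but differ at some $k\in\{0,\dots,n-1\}$; applying $\sigma^k$ moves this difference to the zeroth coordinate, so $\rho(\sigma^k\omega,\sigma^k\omega')=1$ and hence $d_\sigma^n(\omega,\omega')=1$. Comparing $\sigma^{-n}(\omega_0)$ with a maximal $(n,\epsilon)$-separated set $F_n(\epsilon)$ yields $Z_n\leq\sum_{x\in F_n(\epsilon)}\exp(S_n\phi(x))$, and letting $\epsilon\to 0$ after taking $\limsup$ gives $\limsup_{n\to+\infty}\frac{1}{n}\log Z_n\leq P(\sigma,\phi)$.

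For the matching lower bound, I would use the standard bounded distortion estimate: if $\phi$ is $\alpha$-H\"older with H\"older constant $[\phi]_\alpha$ and if $\xi,\eta\in\Sigma$ agree on their first $n$ coordinates, then summing the pointwise bounds $|\phi(\sigma^k\xi)-\phi(\sigma^k\eta)|\leq[\phi]_\alpha 2^{-\alpha(n-k)}$ gives $|S_n\phi(\xi)-S_n\phi(\eta)|\leq C_\phi\=[\phi]_\alpha/(2^\alpha-1)$. Now fix $\epsilon\in(0,1)$ and choose $m\in\N$ with $2^{-(m+1)}<\epsilon$. Any two points in a common cylinder of length $n+m$ satisfy $d_\sigma^n\leq 2^{-(m+1)}<\epsilon$, so any $(n,\epsilon)$-separated set contains at most one point per cylinder of length $n+m$, hence at most $d^m$ points per cylinder of length $n$. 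Each length-$n$ cylinder contains exactly one preimage in $\sigma^{-n}(\omega_0)$, namely the cylinder address concatenated with $\omega_0$; bounded distortion then yields $\exp(S_n\phi(x))\leq e^{C_\phi}\exp(S_n\phi(\omega_C))$ for every $x\in F_n(\epsilon)$ lying in a cylinder $C$ whose preimage is $\omega_C$. Summing over $C$ gives $\sum_{x\in F_n(\epsilon)}\exp(S_n\phi(x))\leq d^m e^{C_\phi}Z_n$, from which $P(\sigma,\phi)\leq\limsup_{n\to+\infty}\frac{1}{n}\log Z_n$. Together with the previous paragraph, $\limsup_{n\to+\infty}\frac{1}{n}\log Z_n=P(\sigma,\phi)$.

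Finally, to promote this $\limsup$ equality to an honest limit, I would show that $a_n\=\log Z_n+C_\phi$ is subadditive. Every $\omega\in\sigma^{-(n+m)}(\omega_0)$ decomposes uniquely as a length-$n$ prefix $\xi$ concatenated with some $\omega'\in\sigma^{-m}(\omega_0)$, and $S_{n+m}\phi(\omega)=S_n\phi(\omega)+S_m\phi(\omega')$; applying the bounded distortion estimate to replace the tail $\omega'$ by $\omega_0$ in the first $n$ Birkhoff terms yields $Z_{n+m}\leq e^{C_\phi}Z_n Z_m$, i.e.\ $a_{n+m}\leq a_n+a_m$. Fekete's lemma then secures the existence of $\lim_{n\to+\infty}\frac{1}{n}\log Z_n$, which must equal the $\limsup$ value $P(\sigma,\phi)$. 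The one substantive ingredient is the bounded-distortion bookkeeping across cylinders in the middle step; the rest is routine.
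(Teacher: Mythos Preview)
Your argument is correct and self-contained. The paper does not actually supply a proof of this lemma; it simply remarks that the result (together with the companion Lemma on the ratio bound) is standard for the full shift and refers the reader to \cite{PU10}. So there is no in-paper proof to compare against.

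One small point worth tightening in your write-up: in the upper-bound step you assert $Z_n\leq\sum_{x\in F_n(\epsilon)}\exp(S_n\phi(x))$ by ``comparing'' $\sigma^{-n}(\omega_0)$ with a maximal $(n,\epsilon)$-separated set. What makes this work is that, since $\sigma^{-n}(\omega_0)$ is itself $(n,\epsilon)$-separated, it can be enlarged to a maximal such set, and the paper's definition of pressure is explicitly stated to be independent of the choice of $F_n(\epsilon)$; you should say this explicitly. Otherwise the bounded-distortion bookkeeping, the $d^m$ counting over length-$(n+m)$ cylinders, and the Fekete argument via $a_n=\log Z_n+C_\phi$ are all fine.
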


\begin{lemma} \label{l:control sum exp S_n for shift}
    Let $\Sigma$ be the space of one-sided infinite sequences on $d$ symbols, and $\sigma\:\Sigma\rightarrow\Sigma$ the shift map.
    Let $\phi$ be an $\alpha$-H\"older continuous function on $\Sigma$. Then there exists $C_0>1$ such that for any $\xi_0,\eta_0\in\Sigma$ and $n\in\N$,
    \begin{equation*}
        \frac{1}{C_0}
        \leq
        \frac
        {\sum_{\xi\in \sigma^{-n}(\xi_0)}\exp (S_n\phi(\xi))}
        {\sum_{\eta\in \sigma^{-n}(\eta_0)}\exp (S_n\phi(\eta))}
        \leq
        C_0.
    \end{equation*}
\end{lemma}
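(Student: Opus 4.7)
The plan is to exploit the standard bounded-distortion property for H\"older potentials on the full shift. The key point is that $\sigma^{-n}(\xi_0)$ and $\sigma^{-n}(\eta_0)$ are naturally in bijection, via the map that takes a preimage $\xi = (w_0, w_1, \dots, w_{n-1}, \xi_0^{(0)}, \xi_0^{(1)}, \dots) \in \sigma^{-n}(\xi_0)$ (i.e., any prepending of $n$ symbols to $\xi_0$) to $\eta = (w_0, w_1, \dots, w_{n-1}, \eta_0^{(0)}, \eta_0^{(1)}, \dots) \in \sigma^{-n}(\eta_0)$, obtained by replacing the tail $\xi_0$ with $\eta_0$. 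Under this bijection, $\xi$ and $\eta$ agree on the first $n$ coordinates.

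The main estimate would then be a routine bounded-distortion calculation. For each $i \in \{0, 1, \dots, n-1\}$, the shifts $\sigma^i(\xi)$ and $\sigma^i(\eta)$ still agree on the first $n-i$ coordinates, so by definition of the metric $\rho$, one has $\rho(\sigma^i(\xi), \sigma^i(\eta)) \leq 2^{-(n-i)}$. Using the $\alpha$-H\"older regularity of $\phi$ with constant $H \= \Hseminorm{C^{0,\alpha}}{\phi}$, I would get
\begin{equation*}
    \abs{S_n \phi(\xi) - S_n \phi(\eta)}
    \leq \sum_{i=0}^{n-1} H \cdot 2^{-(n-i)\alpha}
    = H \sum_{j=1}^{n} 2^{-j\alpha}
    \leq \frac{H}{2^{\alpha} - 1} \eqqcolon B,
\end{equation*}
which is a uniform bound independent of $n$, $\xi_0$, $\eta_0$, and the choice of prepended string.

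Consequently, for each matched pair $(\xi,\eta)$ under the bijection, $\exp(S_n \phi(\xi)) \leq e^B \exp(S_n\phi(\eta))$ and vice versa. Summing over all $d^n$ pairs gives
\begin{equation*}
    e^{-B} \leq
    \frac{\sum_{\xi \in \sigma^{-n}(\xi_0)} \exp(S_n\phi(\xi))}{\sum_{\eta \in \sigma^{-n}(\eta_0)} \exp(S_n\phi(\eta))}
    \leq e^{B},
\end{equation*}
so the conclusion holds with $C_0 \= \exp\!\bigl( H / (2^{\alpha} - 1) \bigr)$. There is no real obstacle here; the only mild care needed is to make the bijection precise and to use the correct metric convention on $\Sigma$ (the exponent of $2$ in $\rho$ decreases by one for every coordinate the two sequences share), so that the geometric series $\sum 2^{-j\alpha}$ converges uniformly in $n$.
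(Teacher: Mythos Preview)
Your proof is correct and is exactly the standard bounded-distortion argument the paper has in mind: the paper does not supply its own proof of this lemma, instead remarking that it is ``standard for the full shifts'' and citing \cite{PU10}. Your bijection between $\sigma^{-n}(\xi_0)$ and $\sigma^{-n}(\eta_0)$ via the shared length-$n$ prefix, followed by the geometric-series estimate on $\abs{S_n\phi(\xi)-S_n\phi(\eta)}$, is precisely how this is done in the references.
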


\subsubsection{Characterizations of topological pressure}
We first give some technical lemmas regarding the topological pressure of CXC maps and H\"older continuous potentials.
\begin{lemma}\label{l:pressure_preimages of one point}
    Suppose that $f\:(\fX_1,X)\rightarrow(\fX_0,X)$ and $\{\cU_n\}_{n\in\N_0}$ satisfy the Assumptions in Section~\ref{s:assumption} and the Additional Assumptions in Subsection~\ref{ss:LD addassumptions}. %Denote $f_X\=f|_X$.
    Let $\phi$ be a real-valued H\"older continuous function on $X$. Then for each $x_0\in X\setminus \post(f)$, we have
    \begin{equation*}
        P(f_X,\phi)=\lim_{n\to+\infty} \frac{1}{n}\log\sum\limits_{x\in f^{-n}(x_0)}\exp (S_n\phi(x)).
    \end{equation*}
\end{lemma}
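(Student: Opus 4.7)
The plan is to transfer the question from $f_X$ on $X$ to the shift $\sigma$ on $\Sigma$ via the semiconjugacy $\pi$ constructed in Proposition~\ref{p:semi conjugacy}, where we already know the analogous identity holds from Lemma~\ref{l:pressure of shift}. The key input that makes this transfer lossless at the level of $x_0$ is precisely the bijection provided by Lemma~\ref{l:a bijection between sequence and preimages}, which is why the hypothesis $x_0 \in X \setminus \post(f)$ enters.

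Concretely, I would proceed as follows. Fix $x_0 \in X\setminus \post(f)$. Since $\pi\:\Sigma\to X$ is surjective (Proposition~\ref{p:semi conjugacy}~(iii)), pick $\omega_0 \in \Sigma$ with $\pi(\omega_0) = x_0$. Because $\pi$ is H\"older continuous (Proposition~\ref{p:semi conjugacy}~(iii)) and $\phi$ is H\"older continuous on $X$, the composition $\phi\circ\pi$ is H\"older continuous on $\Sigma$. Next, for each $n\in\N$ and each $\omega \in \sigma^{-n}(\omega_0)$, the semiconjugacy relation $f\circ\pi = \pi\circ\sigma$ (Proposition~\ref{p:semi conjugacy}~(ii)) yields $\pi\circ\sigma^i = f^i\circ\pi$ for $0\leq i\leq n-1$, and hence
\begin{equation*}
S_n(\phi\circ\pi)(\omega) \;=\; \sum_{i=0}^{n-1}\phi\bigl(\pi(\sigma^i\omega)\bigr) \;=\; \sum_{i=0}^{n-1}\phi\bigl(f^i(\pi(\omega))\bigr) \;=\; S_n\phi(\pi(\omega)).
\end{equation*}
By Lemma~\ref{l:a bijection between sequence and preimages}, the map $\pi|_{\sigma^{-n}(\omega_0)}\:\sigma^{-n}(\omega_0)\to f^{-n}(x_0)$ is a bijection. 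Summing the displayed identity over $\omega \in \sigma^{-n}(\omega_0)$ and reindexing by $x = \pi(\omega) \in f^{-n}(x_0)$ gives
\begin{equation*}
\sum_{\omega\in\sigma^{-n}(\omega_0)}\exp\bigl(S_n(\phi\circ\pi)(\omega)\bigr) \;=\; \sum_{x\in f^{-n}(x_0)}\exp\bigl(S_n\phi(x)\bigr).
\end{equation*}

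To conclude, I would divide both sides by $n$, take $\log$, pass to the limit as $n\to+\infty$, and apply Lemma~\ref{l:pressure of shift} (with $\phi\circ\pi$ in place of $\phi$ and $\omega_0$ as the chosen base point) together with Proposition~\ref{p:top pressure}, which gives $P(\sigma,\phi\circ\pi) = P(f_X,\phi)$. Chaining these equalities produces the desired formula.

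I do not foresee a genuine obstacle here: every ingredient is already in place. The only small care needed is the verification that $S_n(\phi\circ\pi) = S_n\phi\circ\pi$, which is immediate from the semiconjugacy, and the observation that the bijection in Lemma~\ref{l:a bijection between sequence and preimages} preserves the summands pointwise (not merely up to a multiplicative constant), so that no error term of type $C_0$ as in Lemma~\ref{l:control sum exp S_n for shift} needs to be absorbed.
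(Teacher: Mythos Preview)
Your proposal is correct and follows essentially the same approach as the paper: pick $\omega_0\in\pi^{-1}(x_0)$ by surjectivity, use the bijection of Lemma~\ref{l:a bijection between sequence and preimages} together with the semiconjugacy identity $S_n(\phi\circ\pi)=S_n\phi\circ\pi$ to match the two preimage sums, and then apply Proposition~\ref{p:top pressure} and Lemma~\ref{l:pressure of shift}. Your write-up is in fact slightly more detailed than the paper's, since you make explicit why $\phi\circ\pi$ is H\"older and why the Birkhoff sums agree.
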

\begin{proof}
    Fix an arbitrary point $x_0\in X\setminus \post(f)$. Since the semiconjugacy $\pi$ in Proposition~\ref{p:semi conjugacy} is surjective, there exists $\omega_0\in\Sigma$ such that $x_0=\pi(\omega_0)$. 
    
    By Lemma~\ref{l:a bijection between sequence and preimages}, $\pi|_{\sigma^{-n}(\omega_0)}$ is bijective, so we can calculate $P(f_X,\phi)$ using Proposition~\ref{p:top pressure} and Lemma~\ref{l:pressure of shift} as follows:
\begin{align*}
    P(f_X,\phi)=P(\sigma,\phi\circ\pi)
    =&\lim_{n\to+\infty}
    \frac{1}{n}\log\sum\limits_{\omega\in \sigma^{-n}(\omega_0)}\exp (S_n(\phi\circ\pi)(\omega))\\
    =&\lim_{n\to+\infty}
    \frac{1}{n}\log\sum\limits_{x\in f^{-n}(x_0)}\exp (S_n\phi(x)).  \qedhere
\end{align*}
\end{proof}

\begin{lemma}\label{l:loc control of Sn}
    Suppose that $f\:(\fX_1,X)\rightarrow(\fX_0,X)$ and $\{\cU_n\}_{n\in\N_0}$ satisfy the Assumptions in Section~\ref{s:assumption} and the Additional Assumptions in Subsection~\ref{ss:LD addassumptions}.
    Let $\phi$ be an $\alpha$-H\"older continuous function on $X$. Then there exists a constant $C_1>0$ such that for each $n\in\N$ and each $U\in\cU_n$, the inequality 
    \begin{equation*}
    	\abs{ S_n\phi(x)-S_n\phi(y) } \leq C_1
    \end{equation*}
    holds for all $ x,\,y\in X\cap U$.
\end{lemma}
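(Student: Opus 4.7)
The plan is to use a standard telescoping argument combined with the exponential contraction of diameters of good open sets under forward iteration of $f$. Given $n\in\N$, $U\in\cU_n$, and $x,y\in X\cap U$, I will write
\[
S_n\phi(x)-S_n\phi(y)=\sum_{k=0}^{n-1}\bigl(\phi(f^k(x))-\phi(f^k(y))\bigr)
\]
and bound each summand using Hölder continuity of $\phi$ together with the fact that $f^k(x)$ and $f^k(y)$ both lie in the level-$(n-k)$ good open set $f^k(U)$.

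The key observation is that, by the inductive construction of $\cU_n$ recalled in Subsection~\ref{ss:CXC}, each element of $\cU_n$ is a connected component of $f^{-n}(V)$ for some $V\in\cU_0$, and in particular $f^k(U)\in\cU_{n-k}$ for every $k\in\{0,1,\dots,n-1\}$. By Proposition~\ref{p:roundness}(i) combined with Proposition~\ref{p:comparability} (which permits the choice $d_m=C'd_0\theta^m$), we then have $\diam\bigl(f^k(U)\bigr)\leq C'd_0\theta^{n-k}$, and consequently $\abs{f^k(x)-f^k(y)}\leq C'd_0\theta^{n-k}$.

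Letting $L$ denote the $\alpha$-Hölder seminorm of $\phi$, the Hölder estimate yields $\abs{\phi(f^k(x))-\phi(f^k(y))}\leq L(C'd_0)^{\alpha}\theta^{\alpha(n-k)}$. Telescoping and bounding by a convergent geometric series gives
\[
\bigl|S_n\phi(x)-S_n\phi(y)\bigr|\leq L(C'd_0)^{\alpha}\sum_{j=1}^{n}\theta^{\alpha j}\leq\frac{L(C'd_0)^{\alpha}\theta^{\alpha}}{1-\theta^{\alpha}},
\]
and since this final expression is independent of $n$, $U$, $x$, and $y$ and is finite (because $\theta\in(0,1)$ and $\alpha>0$), it may be taken as the desired constant $C_1$.

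The argument is essentially routine and presents no substantive obstacle; the only point requiring mention is the identification $f^k(U)\in\cU_{n-k}$, which is immediate from the inductive definition of the sequence of good open covers. This bounded distortion estimate is precisely the type of ingredient needed for the pressure computations that follow, where one must compare $\exp(S_n\phi)$ at different preimages lying in a common level-$n$ good open set.
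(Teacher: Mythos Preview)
Your proof is correct and follows essentially the same approach as the paper: telescope $S_n\phi(x)-S_n\phi(y)$, use $f^k(U)\in\cU_{n-k}$ together with the diameter bound $d_{n-k}=C'd_0\theta^{n-k}$, apply H\"older continuity, and sum the resulting geometric series to obtain $C_1=L(C'd_0\theta)^\alpha/(1-\theta^\alpha)$.
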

\begin{proof}
    Fix arbitrary $n\in\N$, $U\in\cU_n$, and $x,\,y\in U$.

    By Proposition~\ref{p:comparability}~(i), for each integer $i\in [0, n-1]$, we have $\abs{ f^i(x)-f^i(y) }\leq C'd_0\theta^{n-i}$ since $f^i(x),f^i(y)\in f^i(U)\in\cU_{n-i}$. Since $\phi$ is $\alpha$-H\"older continuous, there exists a constant $C>0$, such that $\abs{  \phi(x_1)-\phi(x_2) } \leq C \abs{ x_1-x_2 }^\alpha$ for all $x_1,\,x_2\in X$. So for each integer $i\in [0, n-1]$,
    \begin{equation*}
    	\Absbig{ \phi \bigl( f^i(x) \bigr)-\phi \bigl(f^i(y) \bigr) } \leq C \bigl(C'd_0\theta^{n-i} \bigr)^\alpha.
    \end{equation*}
    Thus
    \begin{equation*}
    \begin{aligned}
            \abs{ S_n\phi(x)-S_n\phi(y) }
            \leq \sum_{i=0}^{n-1}  \Absbig{  \phi \bigl( f^i(x) \bigr)-\phi \bigl( f^i(y) \bigr) } 
            \leq\sum_{i=0}^{n-1}C(C'd_0)^\alpha\cdot(\theta^\alpha)^{n-i}  
            \leq \frac{  C(C'd_0\theta)^\alpha }{  (1-\theta^\alpha) }.
    \end{aligned}
    \end{equation*}
    By putting $C_1\=C(C'd_0\theta)^\alpha /  (1-\theta^\alpha)$, we complete the proof of this lemma.
\end{proof}

\begin{lemma}\label{l:control sum deg exp S_n, locally}
    Suppose that $f\:(\fX_1,X)\rightarrow(\fX_0,X)$ and $\{\cU_n\}_{n\in\N_0}$ satisfy the Assumptions in Section~\ref{s:assumption} and the Additional Assumptions in Subsection~\ref{ss:LD addassumptions}.
    Let $\phi$ be an $\alpha$-H\"older continuous function on $X$.  Then there exists $C_2>1$ such that for all $U_0\in\cU_0$, $x_0,\,y_0\in X\cap U_0$, and $n\in\N$, 
    \begin{equation*}
	C_2^{-1}    \leq    \cS_n(x_0) / \cS_n(y_0)    \leq C_2,
\end{equation*}
where 
\begin{equation}   \label{e:S}
	\cS_n(z) \= \sum_{z'\in f^{-n}(z) }\deg(f^n;z')\cdot \exp (S_n\phi(z'))  
\end{equation}
for each $z \in X$.
\end{lemma}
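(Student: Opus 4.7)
The plan is to decompose $\cS_n(z)$ according to the level-$n$ preimages of $U_0$ and then compare the resulting sums termwise using the locally uniform control of Birkhoff sums from Lemma~\ref{l:loc control of Sn}.

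First I would let $\cU_n^{U_0}\=\{V\in\cU_n : f^n(V)=U_0\}$. Since $U_0$ is open and connected, these are precisely the connected components of $f^{-n}(U_0)$, so for any $z\in U_0$ the fiber decomposes as $f^{-n}(z)=\bigsqcup_{V\in\cU_n^{U_0}}(V\cap f^{-n}(z))$. By Proposition~\ref{p:component implies onto} combined with Proposition~\ref{p:FBC_2}~(i), each restriction $f^n|_V\:V\rightarrow U_0$ is a surjective FBC map, and since $V$ is an open neighborhood in $\fX_n$ of each of its points, the local degree satisfies $\deg(f^n|_V;z')=\deg(f^n;z')$ at every $z'\in V$. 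The FBC summation rule applied to $f^n|_V$ therefore yields
\begin{equation*}
    \sum_{z'\in V\cap f^{-n}(z)}\deg(f^n;z')=\deg(f^n|_V)
\end{equation*}
for every $z\in U_0$ --- crucially, a quantity that is independent of the base point $z$.

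Next, using total invariance of $X$ together with the fact that every level-$n$ good set meets $X$, for each $V\in\cU_n^{U_0}$ I would fix a reference point $p_V\in V\cap X$; every element of $V\cap f^{-n}(z)$ with $z\in X\cap U_0$ automatically lies in $V\cap X$. Lemma~\ref{l:loc control of Sn} then gives $\abs{S_n\phi(z')-S_n\phi(p_V)}\leq C_1$ throughout $V\cap X$, so the inner sum $\sum_{z'\in V\cap f^{-n}(z)}\deg(f^n;z')\exp(S_n\phi(z'))$ is squeezed between $\exp(\pm C_1)\exp(S_n\phi(p_V))\deg(f^n|_V)$, uniformly for $z\in\{x_0,y_0\}$. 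Taking the ratio of the $x_0$-sum to the $y_0$-sum on each $V$ gives a bound in $[\exp(-2C_1),\exp(2C_1)]$, and summing over $V\in\cU_n^{U_0}$ yields the claim with $C_2\=\exp(2C_1)$.

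The main delicacy I foresee is the clean verification that $\deg(f^n|_V;z')=\deg(f^n;z')$ throughout $V$, which is what transfers the FBC summation identity from $f^n|_V$ to $f^n$ and what makes the $V$-wise weighted count genuinely base-point independent. This is essentially automatic from the definition of local degree once $V$ is known to be open in $\fX_n$, but the entire argument hinges on it; the remainder is a mechanical combination of the FBC summation identity with the H\"older estimate of Lemma~\ref{l:loc control of Sn}.
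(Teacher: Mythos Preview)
Your proposal is correct and follows essentially the same route as the paper: decompose $\cS_n(z)$ over the level-$n$ preimages $V\in\cU_n$ of $U_0$, use that $f^n|_V\:V\to U_0$ is an FBC map so the total local degree over each $V$ is base-point independent, and then invoke Lemma~\ref{l:loc control of Sn} to control the Birkhoff sums on each $V$. The only cosmetic difference is that you route the comparison through a reference point $p_V$, which costs you a factor of $2$ in the exponent (your $C_2=\exp(2C_1)$ versus the paper's $C_2=\exp(C_1)$, obtained by comparing the $x_0$- and $y_0$-sums on $V$ directly); this is harmless for the statement.
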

\begin{proof}
	Fix arbitrary $U_0\in\cU_0$, $x_0,\,y_0\in X\cap U_0$, and $n\in\N$. For each $z \in X$ and each $V \subseteq \fX_1$, write
	\begin{equation*}
		\cS_n(z,V) \= \sum_{z'\in f^{-n}(z) \cap V}\deg(f^n;z')\cdot \exp(S_n\phi(z')).
	\end{equation*}
	Then $\cS_n (z) = \sum_{U\in\cU_n, f^n(U)=U_0}  \cS_n (z,U)$ for each $z\in \{x_0, \, y_0\}$.
%    \begin{equation*}
%            \sum_{x\in f^{-n}(x_0)}\deg(f^n;x)\cdot e^{ S_n\phi(x) }
%           =\sum\limits_{U\in\cU_n, \atop f^n(U)=U_0}\sum_{x\in U,\atop f^n(x)=x_0}\deg(f^n;x)\cdot e^{S_n\phi(x)},
%    \end{equation*}
%    and
%    \begin{equation*}
%            \sum_{y\in f^{-n}(y_0)}\deg(f^n;y)\cdot e^{S_n\phi(y)}
%           =\sum\limits_{U\in\cU_n,\atop f^n(U)=U_0}\sum_{y\in U,\atop f^n(y)=y_0}\deg(f^n;y)\cdot e^{S_n\phi(y)}.
%    \end{equation*}
    For each $U\in\cU_n$ with $f^n(U)=U_0$, we have
    \begin{equation*}
    	\sum_{x\in f^{-n}(x_0) \cap U}\deg(f^n;x)=\deg(f^n|_U)=\sum_{y\in f^{-n}(y_0) \cap U}\deg(f^n;y),
    \end{equation*}
    since $f^n|_U\:U\rightarrow U_0$ is an FBC map.
    Then it is easy to see from Lemma~\ref{l:loc control of Sn} that 
    \begin{equation*}
        \exp(-C_1)  \leq  \cS_n (x_0,U) /  \cS_n (y_0,U)  \leq    \exp (C_1),
    \end{equation*}
    where $C_1>0$ is the constant from Lemma~\ref{l:loc control of Sn}
    Thus we get
    \begin{equation*}
        \exp(-C_1)
        \leq
        \frac
        {\sum_{U\in\cU_n, f^n(U)=U_0} \cS_n (x_0,U) }
        {\sum_{U\in\cU_n, f^n(U)=U_0} \cS_n (y_0,U) } 
        \leq
        \exp(C_1).
    \end{equation*}
    By putting $C_2\=\exp(C_1)$, we complete the proof of this lemma.
\end{proof}

\begin{lemma}\label{l:control sum deg exp S_n, uniformly}
    Suppose that $f\:(\fX_1,X)\rightarrow(\fX_0,X)$ and $\{\cU_n\}_{n\in\N_0}$ satisfy the Assumptions in Section~\ref{s:assumption} and the Additional Assumptions in Subsection~\ref{ss:LD addassumptions}.
    Let $\phi$ be an $\alpha$-H\"older continuous function on $X$.  Then there exists $C_3>1$ such that for all $x_0, \,y_0\in X$ and $n\in\N$, 
    \begin{equation*}
     C_3^{-1}    \leq    \cS_n(x_0) / \cS_n(y_0)    \leq C_3,
    \end{equation*}
    where $\cS_n(\cdot)$ is defined in (\ref{e:S}) in Lemma~\ref{l:control sum deg exp S_n, locally}.
\end{lemma}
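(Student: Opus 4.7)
The plan is to combine the local estimate of Lemma~\ref{l:control sum deg exp S_n, locally} with a global comparison derived from the semiconjugacy $\pi\:\Sigma\rightarrow X$ provided by Proposition~\ref{p:semi conjugacy} and the shift-space estimate of Lemma~\ref{l:control sum exp S_n for shift}. The key observation is that if $x_0\in X\setminus\post(f)$, then no iterated preimage of $x_0$ is a branch point, so $\deg(f^n;x)=1$ for every $n\in\N$ and every $x\in f^{-n}(x_0)$, and hence $\cS_n(x_0)=\sum_{x\in f^{-n}(x_0)}\exp(S_n\phi(x))$.

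First I would reduce the statement, for points in $X\setminus\post(f)$, to the full shift. Given such $x_0$, pick $\omega_0\in\pi^{-1}(\{x_0\})$ using the surjectivity of $\pi$; by Lemma~\ref{l:a bijection between sequence and preimages} the restriction $\pi|_{\sigma^{-n}(\omega_0)}$ is a bijection onto $f^{-n}(x_0)$. Combining with the identity $S_n(\phi\circ\pi)=(S_n\phi)\circ\pi$, which follows from $\pi\circ\sigma=f\circ\pi$, one obtains
\begin{equation*}
\cS_n(x_0)=\sum_{\xi\in\sigma^{-n}(\omega_0)}\exp\bigl(S_n(\phi\circ\pi)(\xi)\bigr).
\end{equation*}
Since $\pi$ is H\"older continuous by Proposition~\ref{p:semi conjugacy}~(iii) and $\phi$ is $\alpha$-H\"older continuous on $X$, the composition $\phi\circ\pi$ is H\"older continuous on $\Sigma$; thus Lemma~\ref{l:control sum exp S_n for shift} applied to $\phi\circ\pi$ yields a constant $C_0>1$ such that $C_0^{-1}\leq\cS_n(x_0)/\cS_n(y_0)\leq C_0$ for all $x_0,y_0\in X\setminus\post(f)$ and all $n\in\N$.

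To extend this bound to arbitrary $x_0,y_0\in X$, I would exploit the local estimate together with the fact that $\post(f)\subseteq P_f$ is nowhere dense in $X$ by Proposition~\ref{p:Pf nowhere dense}. For each $U_0\in\cU_0$, the set $U_0\cap X$ is relatively open and nonempty in $X$, hence contains some $z_{U_0}\in(U_0\cap X)\setminus\post(f)$. Choosing $U_0,U_0'\in\cU_0$ with $x_0\in U_0$ and $y_0\in U_0'$, two applications of Lemma~\ref{l:control sum deg exp S_n, locally} give
\begin{equation*}
C_2^{-1}\leq\frac{\cS_n(x_0)}{\cS_n(z_{U_0})}\leq C_2\qquad\text{and}\qquad C_2^{-1}\leq\frac{\cS_n(y_0)}{\cS_n(z_{U_0'})}\leq C_2,
\end{equation*}
while the preceding paragraph, applied to $z_{U_0},z_{U_0'}\in X\setminus\post(f)$, yields $C_0^{-1}\leq\cS_n(z_{U_0})/\cS_n(z_{U_0'})\leq C_0$. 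Multiplying these three chains of inequalities furnishes the claim with $C_3\=C_0C_2^2$.

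I do not anticipate a serious obstacle: the only subtlety is verifying that the hypothesis $x_0\in X\setminus\post(f)$ of Lemma~\ref{l:a bijection between sequence and preimages} holds when the semiconjugacy is invoked, which is precisely how the reference points $z_{U_0}$ are selected in the second step; once this is in place, the remaining pieces assemble directly from already-established results.
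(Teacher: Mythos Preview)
Your proposal is correct and follows essentially the same route as the paper's own proof: reduce to reference points in $X\setminus\post(f)$ via Proposition~\ref{p:Pf nowhere dense} and Lemma~\ref{l:control sum deg exp S_n, locally}, then transfer the comparison for such points to the full shift using Lemma~\ref{l:a bijection between sequence and preimages} and Lemma~\ref{l:control sum exp S_n for shift}, arriving at the same constant $C_3=C_0C_2^2$. The only cosmetic difference is that you fix one reference point $z_{U_0}$ per element of $\cU_0$ in advance, whereas the paper chooses the auxiliary points $u_0,v_0$ afresh for each pair $x_0,y_0$.
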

\begin{proof}
    Fix arbitrary $x_0,\,y_0\in X$, $n\in\N$, and $U,\,V\in\cU_0$ such that $x_0\in U$, $y_0\in V$. Since $\post(f)$ is nowhere dense in $X$, there exist 
$u_0,v_0\in X\setminus \post(f)$ with $u_0\in U$ and $v_0\in V$.

By Lemma~\ref{l:control sum deg exp S_n, locally}, we have $C_2^{-1}    \leq    \cS_n(x_0) / \cS_n(u_0)    \leq C_2$ and $C_2^{-1}    \leq    \cS_n(v_0) / \cS_n(y_0)    \leq C_2$ where $C_2>1$ is the constant from Lemma~\ref{l:control sum deg exp S_n, locally}.

Choose $\xi_0,\,\eta_0\in\Sigma$ such that $\pi(\xi_0)=u_0$ and $\pi(\eta_0)=v_0$, where $\pi$ is given in Proposition~\ref{p:semi conjugacy}.
By Lemma~\ref{l:a bijection between sequence and preimages}, we have
\begin{align*}
\cS_n(u_0)=&\sum_{u\in f^{-n}(u_0)}\exp (S_n\phi(u))=\sum_{\xi\in\sigma^{-n}(\xi_0)}\exp(S_n\phi\circ\pi(\xi)),\\
\cS_n(v_0)=&\sum_{v\in f^{-n}(v_0)}\exp (S_n\phi(v)) =\sum_{\eta\in\sigma^{-n}(\eta_0)}\exp(S_n\phi\circ\pi(\eta)).
\end{align*}
Then, by Lemma~\ref{l:control sum exp S_n for shift}, we have $C_0^{-1} \leq \cS_n(u_0)  / \cS_n(v_0) \leq C_0$.
Combining the inequalities above, we have
    \begin{equation*}
    C_0^{-1}\cdot C_2^{-2} \leq \cS_n(x_0)  / \cS_n(y_0) \leq  C_0\cdot C_2^2.
    \end{equation*}
By putting $C_3 \=C_0\cdot C_2^2$, we complete the proof of this lemma.
\end{proof}

A characterization of topological pressure is obtained in the following lemma.

\begin{lemma}\label{l:pressure_sequence in X}
    Suppose that $f\:(\fX_1,X)\rightarrow(\fX_0,X)$ and $\{\cU_n\}_{n\in\N_0}$ satisfy the Assumptions in Section~\ref{s:assumption} and the Additional Assumptions in Subsection~\ref{ss:LD addassumptions}. %Denote $f_X\=f|_X$.
    Let $\phi$ be a real-valued H\"older continuous function on $X$. Then for each sequence $\{x_k\}_{k\in\N}$ of points in $ X$,
    \begin{equation*}
    \begin{aligned}
    P(f_X,\phi)=\lim_{n\to+\infty}
    \frac{1}{n}\log\sum\limits_{x\in f^{-n}(x_n)}\deg(f^n;x)\cdot\exp (S_n\phi(x)).
    \end{aligned}
    \end{equation*}
\end{lemma}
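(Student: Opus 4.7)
The plan is to reduce the claim directly to the two preceding lemmas, using that $\post(f)$ is nowhere dense. First I would pick any point $x_0 \in X \setminus \post(f)$, which exists by Proposition~\ref{p:Pf nowhere dense}. For such an $x_0$ and any $x \in f^{-n}(x_0)$, the local degree $\deg(f^n;x)$ must equal $1$: otherwise $x \in B_{f^n}$ would give $x_0 = f^n(x) \in V_{f^n} \subseteq \post(f)$, a contradiction. Consequently
\[
\cS_n(x_0) \= \sum_{x\in f^{-n}(x_0)}\deg(f^n;x)\cdot\exp(S_n\phi(x))=\sum_{x\in f^{-n}(x_0)}\exp(S_n\phi(x)),
\]
so Lemma~\ref{l:pressure_preimages of one point} rewrites as $P(f_X,\phi)=\lim_{n\to+\infty}\frac{1}{n}\log\cS_n(x_0)$.

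Next, given an arbitrary sequence $\{x_k\}_{k\in\N}$ in $X$, I would apply Lemma~\ref{l:control sum deg exp S_n, uniformly} to the pair $(x_0,x_n)$ to obtain a constant $C_3 > 1$, independent of $n$, with
\[
C_3^{-1}\cS_n(x_0) \leq \cS_n(x_n) \leq C_3\,\cS_n(x_0).
\]
Taking $\frac{1}{n}\log$ on both sides and letting $n\to+\infty$, the additive error $\frac{\log C_3}{n}$ vanishes in the limit, yielding
\[
\lim_{n\to+\infty}\frac{1}{n}\log\cS_n(x_n) = \lim_{n\to+\infty}\frac{1}{n}\log\cS_n(x_0) = P(f_X,\phi),
\]
which is exactly the desired identity.

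There is essentially no obstacle here: the lemma is just the combination of the one-point formula for the pressure with the uniform comparability of the weighted preimage sums over base points, and all the work has already been done in Lemmas~\ref{l:pressure_preimages of one point} and~\ref{l:control sum deg exp S_n, uniformly}. The only subtlety worth recording in the write-up is the identification of $\cS_n(x_0)$ with $\sum_{x\in f^{-n}(x_0)}\exp(S_n\phi(x))$ when $x_0\notin\post(f)$, which is what allows us to feed Lemma~\ref{l:pressure_preimages of one point} into the comparability estimate.
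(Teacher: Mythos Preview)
Your proposal is correct and follows essentially the same approach as the paper: fix $x_0\in X\setminus\post(f)$, use Lemma~\ref{l:control sum deg exp S_n, uniformly} to sandwich $\cS_n(x_n)$ between constant multiples of $\cS_n(x_0)$, and invoke Lemma~\ref{l:pressure_preimages of one point} after identifying $\cS_n(x_0)$ with the unweighted preimage sum. The paper's write-up leaves the identification $\cS_n(x_0)=\sum_{x\in f^{-n}(x_0)}\exp(S_n\phi(x))$ implicit, whereas you spell out the local-degree argument; otherwise the two proofs are the same.
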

\begin{proof}
    By Lemma~\ref{l:control sum deg exp S_n, uniformly},
    \begin{equation*}
     C_3^{-1}    \leq \cS_n(x') / \cS_n(x'')   \leq C_3
    \end{equation*}
    holds for all $x',x''\in X$, where $C_3>1$ is the constant from Lemma~\ref{l:control sum deg exp S_n, uniformly},
    and $\cS_n(\cdot)$ is defined in (\ref{e:S}) in Lemma~\ref{l:control sum deg exp S_n, locally}. 
    Fix an arbitrary $x_0\in X\setminus \post(f)$. Then for each $n\in\N$, we have
    \begin{equation*}
    	\frac{1}{n}\log\sum\limits_{x\in f^{-n}(x_0)} e^{S_n\phi(x)} -\frac{\log C_3}{n}
    	\leq
        \frac{1}{n}\log ( \cS_n(x_n) )
        \leq
        \frac{1}{n}\log  \sum\limits_{x\in f^{-n}(x_0)} e^{S_n\phi(x)} + \frac{\log C_3}{n} .
    \end{equation*}
    So, by Lemma~\ref{l:pressure_preimages of one point} the limit $\lim_{n\to+\infty}\frac{1}{n}\log (\cS_n(x_n))$ exists and is equal to $P(f_X,\phi)$.
\end{proof}

Now, we are ready to prove our level-$2$ large deviation principle.

\begin{proof}[Proof of Theorem~\ref{t:Intro_LD}]
    Suppose that $f\:(\fX_1,X)\rightarrow(\fX_0,X)$ and $\{\cU_n\}_{n\in\N_0}$ satisfy the Assumptions in Section~\ref{s:assumption} and the Additional Assumptions in Subsection~\ref{ss:LD addassumptions}.
    %We denote $f_X\=f|_X$ for clarity.

    We apply Theorem~\ref{t:LD, Kifer and Juan} with $Z\=X$, $g\=f_X$, and $H\=C^{0,\alpha}(X)$.    
    By \cite[Theorem~6.8]{He01}, the set of Lipschitz functions on $X$ is dense in $C(X)$, thus $H$ is dense in $C(X)$.

    By Theorem~\ref{t:Intro asym h-expans}, $f_X$ is asymptotically $h$-expansive, thus $h_\centerdot(f_X)$ is upper semi-continuous. Condition~(i) in Theorem~\ref{t:LD, Kifer and Juan} is satisfied.
    By Proposition~\ref{p:Uniqueness equilibrium state}, for each $\psi\in H$, there exists a unique equilibrium state for $f$ and potential $\phi+\psi$. Condition~(ii) in Theorem~\ref{t:LD, Kifer and Juan} is satisfied.
    
    By Lemma~\ref{l:pressure_sequence in X}, for each $\psi\in H$, we have
 \begin{equation*}
 \begin{aligned}
     &\lim_{n\to+\infty}\frac{1}{n}\log\int_{\cP(X)} \! \exp{\biggl(n\int\!\psi \diff\mu\biggr)} \diff\Omega_n(\mu)\\
     &\qquad =
     \lim_{n\to+\infty}\frac{1}{n}\log  \biggl(  \sum\limits_{y\in f^{-n}(x_n)}
    \frac{\deg(f^n;y)\cdot e^{S_n\phi(y)}}
    {\sum\limits_{z\in f^{-n}(x_n)}\deg(f^n;z)\cdot e^{S_n\phi(z)}}e^{\sum_{i=0}^{n-1}\psi(f^i(y))}  \biggr) \\
    &\qquad=
    \lim_{n\to+\infty}\frac{1}{n}
    \biggl(
    \log\sum\limits_{y\in f^{-n}(x_n)} \deg(f^n;y)\cdot e^{ S_n(\phi+\psi)(y) }  -  \log\sum\limits_{y\in f^{-n}(x_n)} \deg(f^n;y)\cdot e^{S_n\phi(y)}
    \biggr)\\
    &\qquad=
    P(f_X,\phi+\psi)-P(f_X,\phi).
\end{aligned}
\end{equation*}

Now that all conditions in Theorem~\ref{t:LD, Kifer and Juan} are satisfied, the result follows from Theorem~\ref{t:LD, Kifer and Juan}.
\end{proof}

We finish this paper with an equidistribution result as a consequence of our level-$2$ large deviation principle. The proof follows from similar arguments in the proof of \cite[Corollary~1.7]{Li15}.

\begin{cor}[Equidistribution]\label{c:Equidistribution}
	Let $f\:(\fX_1,X)\rightarrow(\fX_0,X)$ be a metric CXC system. Assume that the following conditions are satisfied:
	\begin{enumerate}
		\smallskip
		\item[\textnormal{(1)}] $\fX_0$ and $\fX_1$ are strongly path-connected.
		\smallskip
		\item[\textnormal{(2)}] The branch set $B_f$ is finite. 
	\end{enumerate}
	Let $\phi$ be a real-valued H\"older continuous function on $X$, and $\mu_\phi$ be the unique equilibrium state for $f|_X$ and potential $\phi$.
	Let $\{x_n\}_{n\in\N}$ be a sequence of points in $X$. Consider the following sequence of Borel probability measures on $X$:
	\begin{equation*}
		\nu_n\=\sum_{y\in f^{-n}(x_n)}\frac{\deg(f^n;y)\cdot\exp (S_n\phi(y))}{\sum_{z\in f^{-n}(x_n)}\deg(f^n;z)\cdot\exp (S_n\phi(z))}\frac{1}{n}\sum_{i=0}^{n-1}\delta_{f^i(y)},
	\end{equation*}
	where $S_n\phi$ is defined in (\ref{eq:def:Birkhoff average}).
	Then $\nu_n$ converges to $\mu_\phi$ in the weak$^*$ topology as $n\to+\infty$.
\end{cor}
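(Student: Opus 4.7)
The plan is to recognize $\nu_n$ as the barycenter of the measure $\Omega_n$ constructed in Theorem~\ref{t:Intro_LD}, and then use the fact that $\Omega_n$ converges weakly to the Dirac measure supported on $\mu_\phi$.

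More precisely, recall from Theorem~\ref{t:Intro_LD} that
\begin{equation*}
    \Omega_n = \sum_{y\in f^{-n}(x_n)}\frac{\deg(f^n;y)\cdot\exp(S_n\phi(y))}{\sum_{z\in f^{-n}(x_n)}\deg(f^n;z)\cdot\exp(S_n\phi(z))}\,\delta_{W_n(y)},
\end{equation*}
where $W_n(y) = \frac{1}{n}\sum_{i=0}^{n-1}\delta_{f^i(y)}$. A direct computation then shows that for every $\psi\in C(X)$,
\begin{equation*}
    \int_X \psi\diff\nu_n = \sum_{y\in f^{-n}(x_n)}\frac{\deg(f^n;y)\cdot\exp(S_n\phi(y))}{\sum_{z\in f^{-n}(x_n)}\deg(f^n;z)\cdot\exp(S_n\phi(z))}\int_X\psi\diff W_n(y) = \int_{\cP(X)}\biggl(\int_X \psi\diff\mu\biggr)\diff\Omega_n(\mu).
\end{equation*}
So the first step is just to unwind the definition and observe that $\nu_n$ is the pushforward (or barycenter) of $\Omega_n$ under the evaluation pairing with test functions in $C(X)$.

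Next, Theorem~\ref{t:Intro_LD} applies under the hypotheses of the corollary and yields that $\Omega_n$ converges in the weak$^*$ topology on $\cP(\cP(X))$ to the Dirac measure $\delta_{\mu_\phi}$, where $\mu_\phi$ is the unique equilibrium state for $f|_X$ and $\phi$ (existence and uniqueness being supplied by Theorem~\ref{t:Intro existence equilibrium} and Proposition~\ref{p:Uniqueness equilibrium state}, respectively). Fixing $\psi \in C(X)$, the map $\Phi_\psi\:\cP(X)\rightarrow\R$ defined by $\Phi_\psi(\mu)\=\int_X\psi\diff\mu$ is continuous with respect to the weak$^*$ topology on $\cP(X)$ and uniformly bounded by $\|\psi\|_\infty$. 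Hence the weak$^*$ convergence $\Omega_n\to\delta_{\mu_\phi}$ on $\cP(\cP(X))$ yields
\begin{equation*}
    \int_X \psi\diff\nu_n = \int_{\cP(X)}\Phi_\psi\diff\Omega_n \;\longrightarrow\; \int_{\cP(X)}\Phi_\psi\diff\delta_{\mu_\phi} = \int_X\psi\diff\mu_\phi.
\end{equation*}
Since $\psi\in C(X)$ is arbitrary, this shows $\nu_n\to\mu_\phi$ in the weak$^*$ topology, completing the proof.

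There is really no hard step here: the work has already been done in establishing Theorem~\ref{t:Intro_LD}. The only thing to be careful about is distinguishing between the weak$^*$ topology on $\cP(X)$ and that on $\cP(\cP(X))$, and verifying that $\Phi_\psi$ is a legitimate test function for weak$^*$ convergence on the latter space (which it is, being continuous and bounded). Thus the corollary is essentially a formal consequence of Theorem~\ref{t:Intro_LD} via the barycenter construction.
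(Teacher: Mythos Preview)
Your proposal is correct and follows essentially the same approach as the paper, which does not give a detailed proof but simply remarks that the argument is the standard barycenter deduction from the level-$2$ large deviation principle (referring to \cite[Corollary~1.7]{Li15}). Your write-up is in fact more explicit than what appears in the paper.
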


%begin bibliography)

%end bibliography

%enddocument
\end{document}